\newtheorem{Lemma}{Lemma}
\newtheorem{Proposition}[Lemma]{Proposition}
\newtheorem{theorem}[Lemma]{Theorem}
\newtheorem{Condition}[Lemma]{Condition}
\newtheorem{Corollary}[Lemma]{Corollary}
\newtheorem*{Proposition*}{Proposition}
\newcommand{\ed}{\ \stackrel{d}{=} \ }
\newcommand{\bP}{{\bf P}}
\newcommand{\bE}{{\bf E}}
\newcommand{\bod}{{\bf d}}
\newcommand{\cE}{{\mathcal E}}
\def\gen{{\rm{gen}}}
\newcommand{\tA}{{\tilde{A}}}
\newcommand{\tB}{{\tilde{B}}}
\newcommand{\bxi}{{\overline{\xi}}}
\newcommand{\bK}{{\overline{K}}}
\newcommand{\bcC}{{\overline{\cC}}}
\newcommand{\Pb}{{\mathbb{P}}}
\newcommand{\Eb}{{\mathbb{E}}}
\newcommand{\Bi}{{\rm Bi}}
\def\bh{\overline{h}}
\def\bg{\bar{g}}
\def\bh{\bar{h}}
\def\tG{\tilde{G}}
\def\hz{\hat{z}}
\def\balpha{\boldsymbol{\alpha}}
\def\boldd{\boldsymbol{d}}
\def\boldk{\boldsymbol{k}}
\def\boldp{\boldsymbol{p}}
\def\boldt{\boldsymbol{t}}
\def\dm{d_{\rm max}}
\def\pto{\stackrel{p}{\to}}
\newcommand{\Nbold}{{\mathbb{N}}}
\def\ind{{\rm 1\hspace{-0.90ex}1}}
\newcommand{\cC}{\mathcal C}
\newcommand{\cI}{\mathcal I}
\title{Diffusion and Cascading Behavior\\ in Random Networks}
\author{Marc Lelarge\footnote{INRIA - ENS, 23 avenue d'Italie, 75013 Paris, France, tel:+33(0)1.39.63.55.33, fax:+33.(0)1.39.63.79.88,  Marc.Lelarge@ens.fr} }
\date{}
\begin{document}
\maketitle

\begin{abstract}
The spread of new ideas, behaviors or technologies has been extensively studied using epidemic models. Here we consider a model of diffusion where the
individuals' behavior is the result of a strategic choice. We study a simple coordination game with binary choice and give a condition for a new action to become
widespread in a random network. We also analyze the possible equilibria of this game and identify conditions for the coexistence of both strategies in large connected sets.
Finally we look at how can firms use social networks to promote their goals with limited information.
Our results differ strongly from the one derived with epidemic models and show that connectivity plays an ambiguous role: while it
allows the diffusion to spread, when the network is highly connected, the diffusion is also limited by high-degree nodes which are very stable. 
\end{abstract}

{\bf Keywords:} social networks, diffusion, random graphs, empirical distribution

{\bf JEL codes:} C73, O33, L14

\thispagestyle{empty}

\newpage
\section{Introduction}
There is a vast literature on epidemics on complex networks (see
\citep{new03} for a review).
Most of the epidemic models consider a transmission mechanism which is
independent of the local condition faced by the agents concerned.
However, if there is a factor of coordination or persuasion involved, relative
considerations tend to be important in understanding whether some new
belief or behavior is adopted \citep{vega07}.
To fix ideas, it may be useful to think of the diffusion process as
modeling the adoption of a new technology. In this case, when an agent
is confronted with the possibility of adoption, her decision depends
on the persuasion effort exerted on her by each of her neighbors in
the social network.
More formally, those neighborhood effects can be captured as follows: the
probability that an agent adopts the new technology when $r$ out of her $d$
neighbors have already adopted can be modeled by a threshold function: this probability is one 
if $r/d\geq q$ and it is zero otherwise.
For $\theta=1/2$, this would correspond to a local majority rule.
More generally, some simple models for the diffusion of a new behavior
have been proposed in term of a basic underlying model of individual
decision-making: as individuals make decisions based on the choices of
their neighbors, a particular pattern of behavior can begin to spread
across the links of the network \citep{vega07}, \citep{easley2010}.
In this work, we analyze the diffusion in the large population limit when the underlying graph is a random network with given vertex degrees.
There is now a large literature on the study of complex networks
across different disciplines such as physics \citep{new03}, mathematics
\citep{rg}, sociology \citep{wat02}, networking \citep{sig08} or economics
\citep{vega07}. There is also a large literature on local
interaction and adoption externalities \citep{jy07}, \citep{lopez},
\citep{ggjvy08}, \citep{py09}. Similarly to these works, we study a game where players' payoffs depend on the actions taken by their neighbors in the network but not on the specific identities of these neighbors. Our most general framework (presented in Section \ref{sec:mod}) allows to deal with threshold games of complements, i.e. a player has an increasing incentive to take an action as more neighbors take the action.

The main contribution of our paper is a
model which allows us to study rigorously semi-anonymous threshold games of complements with local interactions on a complex network.
We use a very simple dynamics for the evolution of the play: each time, agents choose the strategy with best payoff, given the current behavior of their neighbors. Stochastic versions of best response dynamics have been studied extensively as a simple model for the emergence of conventions \citep{kmr}, \citep{you93}. A key finding in this line of work is that local interaction may drive the system towards a particular equilibrium in which all players take the same action. 
Most of the literature following \citep{el93}, \citep{bl93} is concerned with stochastic versions of best response dynamics on fixed networks and a simple condition known as risk dominance \citep{hs88} determines whether an innovation introduced in the network will eventually become widespread. In this case, bounds on the convergence time have been computed \citep{you00}, \citep{ms09} as a function of the structure of the interaction network.
In this paper, we focus on properties of deterministic (myopic) best response dynamics. In such a framework, as shown in \citep{bl95}, coordination on the risk dominant strategy depends on the network structure, the dynamics of the game and the initial configuration. In particular, this coordination might fail and equilibria with co-existent strategies can exist. This was shown in \citep{bl95} already in the simple case where the underlying network is the two-dimensional lattice. While the results in \citep{bl95} depend very much on the details of the particular graph considered, \citep{mor} derives general results that depend on local properties of the graphs.

More formally, agents are nodes of a fixed infinite graph and revise their strategies synchronously.
There is an edge
$(i,j)$ if agents $i$ and $j$ can interact with each other. Each node has a
choice between two possible behaviors labeled $A$ and $B$. On each
edge $(i,j)$, there is an incentive for $i$ and $j$ to have their
behaviors match, which is modeled as the following coordination game
parametrized by a real number $q\in (0,1)$:
if $i$ and $j$ choose $A$ (resp. $B$), they each receive a payoff of
$q$ (resp. $(1-q)$); if they choose opposite strategies, then they
receive a payoff of $0$.
Then the total payoff of a player is the sum of the payoffs with each
of her neighbors. If the degree of node $i$ is $d_i$ and $N_i^B$ is
her number of neighbors playing $B$, then the payoff to $i$ from
choosing $A$ is $q(d_i-N_i^B)$ while the payoff from choosing $B$ is
$(1-q)N^B_i$. Hence the best response strategy for agent $i$ is to adopt $B$ if $N_i^B>qd_i$ and $A$ if
$N_i^B\leq qd_i$. 
A number of qualitative insights can be derived from
this simple diffusion model \citep{bl95,mor}.
Clearly, a network where all nodes play $A$ is an equilibrium of the
game as is the state where all nodes play $B$. 
Consider now a network where all nodes initially play $A$ and a small number of nodes are forced to adopt strategy $B$ (constituting the seed).
Other nodes in the network apply best-response updates, then
these nodes will be repeatedly applying the following rule: switch to
$B$ if enough of your neighbors have already adopted $B$.
There can be
a cascading sequence of nodes switching to $B$ such that a
network-wide equilibrium is reached in the limit.
In particular, contagion is said to occur if action $B$ can spread from a finite set of players to the whole population. It is shown in \citep{mor} that there is a contagion threshold such that contagion occurs if and only if the parameter $q$ is less than the contagion threshold. In particular the contagion threshold is always at most $1/2$ so that $B$ needs to be the risk-dominant action in order to spread. However this condition is not sufficient and \citep{mor} gives a number of characterizations of the contagion threshold and also shows that low contagion threshold implies the existence of equilibria where both actions are played. 

Most of the results on this model are restricted to deterministic infinite graphs and the results in \citep{mor} depend on local properties of graphs that need to be checked node-by-node. In networks exhibiting heterogeneity among the nodes, this approach will be limited in its application.
In this work, we analyze the diffusion in the large population limit when the underlying graph is a random network $G(n,\bod)$ with $n$ vertices and where $\bod=(d_i)_1^n$ is a given degree sequence (see Section \ref{sec:conf} for a detailed definition).
Although random graphs are not considered to be highly realistic
models of most real-world networks, they are often used as first
approximation and are a natural first choice for a sparse interaction
network in the absence of any known geometry of the problem
\citep{netbook}. 
Our analysis yields several insights into how
the diffusion propagates and we believe that they could be used in real-world situations.
As an example, we show some implications of our results for the design of optimal advertising strategies.
Our results extend the previous analysis of global
cascades made in \citep{wat02} using a threshold model.
They differ greatly from the study of standard epidemics models used
for the analysis of the spread of viruses \citep{bailey} where an
entity begin as 'susceptible' and may become 'infected' and infectious
through contacts with her neighbors with a given probability.
Already in the simple coordination game presented above, we show that connectivity (i.e. the average number of
neighbors of a typical agent) plays an ambiguous role: while it
allows the diffusion to spread, when the network is highly connected,
the diffusion is also limited by high-degree nodes, which are very
stable. These nodes require a lot of their neighbors to switch to $B$
in order to play $B$ themselves. 

In the
case of a sparse random network of interacting agents, we compute the
contagion threshold in Section \ref{sec:cont} and relate it to the important notion of {\it pivotal players}. We also show the existence of (continuous and discontinuous)
phase transitions.
In Section \ref{sec:eq}, we analyze the possible equilibria of the game for low values of $q$. In particular, we give conditions under which an equilibrium with coexistence of large (i.e. containing a positive fraction of the total population) connected sets of players $A$ and $B$ is possible.
In Section \ref{sec:ad}, we also compute the minimal size of a seed of new adopters in order to trigger a global cascade in the case where these adopters can only be
sampled without any information on the graph. We show that this
minimal size has a non-trivial behavior as a function of the connectivity.
In Section \ref{sec:lmf}, we give a heuristic argument allowing to
recover the technical results which gives some intuition behind our formulas.
Our results also explain why social networks can display a
great stability in the presence of continual small shocks that are as
large as the shocks that ultimately generate a global
cascade. Cascades can therefore be regarded as a specific
manifestation of the robust yet fragile nature of many complex systems
\citep{cd02}: a system may appear stable for long periods of time and
withstand many external shocks (robustness), then suddenly and
apparently inexplicably exhibit a large scale cascade (fragility).

Our paper is divided into two parts. The first part is contained in the next section. We apply our technical results to the particular case of the coordination game presented above. 
We describe our main findings and provide heuristics and intuitions for them.
These results are direct
consequences of our main theorems stated and proved in the second part of the paper. This second part starts with Section \ref{sec:mod}, where we present in details our most general model of diffusion (where initial activation and thresholds can vary among the population and also be random). This general model allows to recover and extend results in the random graphs literature and is used in \citep{coulel} for the analysis of the impact of clustering.
We also state our main
technical results: Theorem \ref{th:epi} and Theorem
\ref{prop:cascade}. Their proofs can be found in Sections \ref{sec:thepi} and \ref{sec:casc} respectively. Section \ref{sec:conc} contains our concluding remarks.

{\bf Probability asymptotics:}
in this paper, we consider sequences of (random) graphs and asymptotics
as the number of vertices $n$ tends to infinity. For notational
simplicity we will usually not show the dependency on $n$ explicitly.
All unspecified limits and other
asymptotics statement are for $n\to \infty$. For example, w.h.p. (with
high probability) means with probability tending to 1 as $n\to\infty$
and $\to^p$ means convergence in probability as $n\to
\infty$. Similarly, we use $o_p$ , $\Omega_p$ and $O_p$ in a standard way, see \cite{rg}. For
example, if $X^{(n)}=X$ is a parameter of the random graph, $X=o_p(n)$ means
that $\Pb(X>\epsilon n)\to0$ as $n\to \infty$ for every $\epsilon>0$, equivalently $X/n\to^p 0$, or for every $\epsilon>0$, $|X|<\epsilon n$
w.h.p.

\section{Analysis of a simple model of cascades}

In this section, we first present the model for the underlying graph. This is the only part of this section required to understand our main technical results in Section \ref{sec:mod}.
The remaining parts of this section present and discuss applications and special cases exploring the significance of the technical results derived in the sequel.

\subsection{Graphs: the configuration model}\label{sec:conf}

We consider a set $[n]=\{1,\dots,n\}$ of agents interacting over a social network.
Let $\boldd=(d_i^{(n)})_1^n=(d_i)_1^n$ be a sequence of non-negative integers such that $\sum_{i=1}^n d_i$ is even. For notational simplicity we will usually not show the
dependency on $n$ explicitly. This sequence is the degree sequence of the graph: agent $i\in [n]$ has degree $d_i$, i.e. has $d_i$ neighbors.
We define a random
multigraph (allowing for self-loop and multiple links) with given degree sequence $\boldd$, denoted by
$G^*(n,\boldd)$ by the configuration model \citep{bol01}: take a set
of $d_i$ half-edges for each vertex $i$ and combine the half-edges into
pairs by a uniformly random matching of the set of all
half-edges. Conditioned on the multigraph $G^*(n,\boldd)$ being a
simple graph, we obtain a uniformly distributed random graph with the
given degree sequence, which we denote by $G(n, \boldd)$ \citep{j06}.

We will let $n\to \infty$ and assume that we are given $\boldd=(d_i)_1^n$
satisfying the following regularity conditions, see \citep{mr95}:
\begin{Condition}\label{cond}
For each $n$, $\boldd=(d_i^{(n)})_1^n$ is a sequence of
non-negative integers such that $\sum_{i=1}^n d_i$ is even and, for
some probability distribution $\boldp=(p_r)_{r=0}^\infty$ independent of $n$,
\begin{itemize}
\item[(i)] $|\{i:\: d_i=r\}|/n\to p_r$ for every $r\geq 0$ as $n\to
  \infty$;
\item[(ii)] $\lambda:=\sum_{r\geq 0}rp_r\in (0,\infty)$;
\item[(iii)] $\sum_{i\in [n]} d_i^2=O(n)$.
\end{itemize}
\end{Condition}
In words, we assume that the empirical distribution of the degree sequence converges to a fixed probability distribution $\boldp$ with a finite mean $\lambda$. Condition \ref{cond} (iii) ensures that $\boldp$ has a finite second moment and is technically required. It implies that the empirical mean of the degrees converges to $\lambda$.

In some cases, we will need the following additional condition which implies that the asymptotic distribution of the degrees $\boldp$ has a finite third moment:
\begin{Condition}\label{cond2}
For each $n$, $\boldd=(d_i^{(n)})_1^n$ is a sequence of
non-negative integers such that
$\sum_{i\in [n]} d_i^3=O(n)$.
\end{Condition}

The results of this work can be applied to some other random graphs
models too by conditioning on the vertex degrees (see \citep{jl07,jlgiant}). For example, for the Erd\H{o}s-R\'enyi graph $G(n,\lambda/n)$ with $\lambda\in (0,\infty)$, our results will hold with the limiting probability distribution $\boldp$ being a Poisson distribution with mean $\lambda$.

\subsection{Contagion threshold for random networks}\label{sec:cont}

An interesting perspective is to understand how different network
structures are more or less hospitable to cascades. Going back to
the diffusion model described in the Introduction, we see that the lower $q$ is, the easiest the
diffusion spreads.
In \citep{mor}, the contagion threshold of a connected infinite network
(called the cascade capacity  in \citep{easley2010}) is defined as the
maximum threshold $q_c$ at which a finite set of initial adopters can
cause a complete cascade, i.e. the resulting cascade of adoptions of
$B$ eventually causes every node to switch from $A$ to $B$.
In this section, we consider a model where the initial adopters are forced
to play $B$ forever. In this case, the diffusion is monotone and the
number of nodes playing $B$ is non-decreasing. We say that this case
corresponds to the {\it permanent adoption model}: a player playing $B$ will
never play $A$ again. We will discuss a model where the initial adopters also apply best-response update in Section \ref{sec:eq}.

To illustrate the notion of contagion threshold, consider the infinite network depicted on Figure \ref{fig:counter34}.
It is easy to see that the contagion threshold for such a network is
$q_c=\frac{1}{4}$, the same as for the $2$-dimensional lattice. 
Note that in the network of Figure \ref{fig:counter34} the density of nodes with degree
$3$ is $\frac{8}{9}$ and the density of nodes of degree $4$ is $\frac{1}{9}$, whereas in the $2$-dimensional lattice, all vertices have degree $4$. 
\begin{figure}[htb]
\begin{center}
\includegraphics[width=5cm]{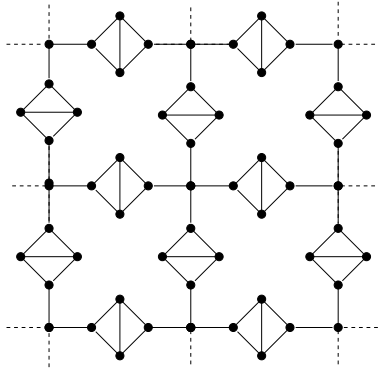}
\hspace{25pt} \caption{An infinite network with contagion threshold
  $\frac{1}{4}$ and with density of nodes with degree $3$ equals to $\frac{8}{9}$ and degree $4$ equals to $\frac{1}{9}$.}
\label{fig:counter34}
\end{center}\end{figure}

We now compute the contagion threshold for a sequence of random
networks. Since a random network is finite and not necessarily
connected, we first need to adapt the definition of contagion
threshold to our context. We start by recalling a basic result on the
size of the largest component of a random graph with a given degree
sequence \citep{mr95,jlgiant}, namely a sufficient condition for the existence of a {\it giant component}.

\begin{Proposition*}
Consider the random graph $G(n,\bod)$ satisfying Conditions \ref{cond}
with asymptotic degree distribution $\boldp=(p_r)_{r=0}^\infty$.
If $\sum_{r=1}^\infty r(r-2)p_r>0$ then the size of the largest component of
$G(n,\bod)$ is $\Omega_p(n)$. In this case, we say that there exists a {\it giant component}.
\end{Proposition*}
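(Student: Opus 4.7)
My plan follows the classical strategy of \citep{mr95,jlgiant}: explore the component of a uniformly chosen vertex and couple the exploration with a Galton--Watson branching process. First, since $\Pb(G^*(n,\boldd)\text{ is simple})$ is bounded away from $0$ under Condition \ref{cond} (by the result of \citep{j06}), any property that holds w.h.p.\ on the multigraph $G^*(n,\boldd)$ also holds w.h.p.\ on $G(n,\boldd)$, so it is enough to prove the claim for $G^*(n,\boldd)$. Pick $v$ uniformly at random among $[n]$ and reveal $\cC(v)$ by the standard half-edge exploration: mark the $d_v$ half-edges of $v$ active, then at each step pick an active half-edge, pair it to a uniformly chosen unpaired half-edge, reveal the endpoint $w$, deactivate the matched pair, and, if $w$ is new, activate its remaining $d_w-1$ half-edges.

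The law of the degree of a newly discovered vertex $w$ is close to the size-biased distribution $\hat p_r = rp_r/\lambda$, so $w$ contributes on average $r-1$ new active half-edges. The idealized branching process therefore has offspring probability generating function $g(z)=\sum_{r\ge1}\frac{rp_r}{\lambda}z^{r-1}$ with mean $g'(1)=\lambda^{-1}\sum_r r(r-1)p_r$, which is finite by Condition \ref{cond} (iii). The supercriticality condition $g'(1)>1$ is then algebraically equivalent to $\sum_{r\ge1}r(r-2)p_r>0$, exactly the Molloy--Reed criterion in the statement. In this regime, the branching process survives with some probability $\zeta>0$.

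It remains to lift the branching-process survival to the graph. For any $T=o(\sqrt n)$ the exploration can be coupled to the branching process with total variation distance $o(1)$, since under Condition \ref{cond} (iii) the expected number of ``collision'' events (pairing an active half-edge with a half-edge of a previously revealed vertex) up to time $T$ is $O(T^2/n)$. Choosing $T\to\infty$ slowly shows that a uniform vertex $v$ satisfies $|\cC(v)|\ge T$ with probability at least $\zeta-o(1)$. A standard martingale/second-moment concentration argument (as in \citep{mr95,jlgiant}), applied to the number of active half-edges as a function of the number of pairing steps, then upgrades this to $|\cC(v)|\ge\epsilon n$ with probability bounded below for some $\epsilon>0$; a sprinkling step finally identifies all such large components as a unique giant one, because two disjoint $\epsilon n$-sized components would receive $\Omega(n)$ edges between them in expectation once $\sum_i d_i^2=O(n)$ is used. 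The main obstacle is this last upgrade: extending the coupling beyond $\sqrt n$ steps and showing that the branching-process approximation does not collapse once a linear fraction of the half-edges has been paired. Condition \ref{cond} (iii) is essential here, as it controls the potential influence of atypically high-degree vertices that would otherwise dominate the late stages of the exploration.
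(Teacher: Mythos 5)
Your sketch is the classical Molloy--Reed route (local branching-process coupling, then a global upgrade), whereas the paper does not reprove this statement at all: it recalls it from \citep{mr95,jlgiant}, and its own machinery would deliver it as the special case $K\equiv 0$, $\pi=1$ of Theorem \ref{prop:cascade} (see also Theorem 3.9 of \citep{j08}), i.e.\ via the continuous-time death-process/fluid-limit exploration, which tracks the number of unpaired half-edges uniformly in time and therefore never has to extend a branching-process coupling past $o(\sqrt n)$ steps. That methodological difference matters here, because the two places where your argument stops are exactly the places where the whole difficulty of the theorem lives. First, the statement asserts $\Omega_p(n)$, which requires the largest component to be of linear size with probability tending to $1$, not with probability bounded below; your coupling only gives $\Pb(|\cC(v)|\geq T)\geq \zeta-o(1)$ with $\zeta<1$, and the passage from ``a random vertex lies in a large component with positive probability'' to ``w.h.p.\ there is a linear component'' needs a genuine concentration argument (e.g.\ a two-vertex second-moment computation showing that the number of vertices in components of size $\geq T$ concentrates, together with a merging argument), which you only name as ``standard'' and indeed flag yourself as ``the main obstacle.'' Declaring the core of the proof standard does not discharge it.

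Second, the specific mechanism you offer for the global step does not work as stated. There is no independent ``sprinkling'' randomness in the configuration model: the matching is a single exposure, and the two putative disjoint components of size $\epsilon n$ are events defined by that very matching, so arguing that they ``would receive $\Omega(n)$ edges between them in expectation'' is circular --- conditionally on being components they receive none, and an unconditional expectation computation says nothing about the conditional event. One can simulate sprinkling by revealing the pairing in two rounds, or avoid it entirely, but that requires a real argument; moreover uniqueness of the giant is not even what the Proposition asks for --- what is needed is precisely the w.h.p.\ lower bound on the largest component, i.e.\ the concentration step above. By contrast, the death-process approach used in Sections \ref{sec:thepi}--\ref{sec:casc} of the paper (with $\bar g$, $\bar h_1$ and the cascade condition reducing to $\sum_r r(r-2)p_r>0$ when $t_{r0}\equiv 1$, $\pi=1$) proves the uniform-in-time law of large numbers directly from Lemma \ref{lem:death} and Condition \ref{cond}, and reads off the linear size of $P$ from $1-\bar h_1(\bar\xi)>0$, so it bypasses both of your gaps. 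As it stands, your proposal is an accurate outline of the Molloy--Reed strategy but not a proof: the supercriticality computation and the $O(T^2/n)$ collision bound are fine, while the linear-size and w.h.p.\ upgrades are missing or incorrectly justified.
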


For a graph $G=(V,E)$ and a parameter $q$, we consider the largest connected
component of the induced subgraph in which we keep only vertices of degree
strictly less than $q^{-1}$.  We call the vertices in this component {\it pivotal
players}: if only one pivotal player switches from $A$ to $B$ then the
whole set of pivotal players will eventually switch to $B$ in the
permanent adoption model.
For a player $v\in V$, we denote by $C(v,q)$ the final
number of players $B$ in the permanent adoption model with parameter
$q$, when the initial state consists of only $v$ playing $B$, all
other players playing $A$. Informally, we say that $C(v,q)$ is the
size of the cascade induced by player $v$.

\begin{Proposition}\label{prop:cap}
Consider the random graph $G(n,\bod)$ satisfying Conditions \ref{cond}
and \ref{cond2} 
with asymptotic degree distribution $\boldp=(p_r)_{r=0}^\infty$ such
that $\sum_{r\geq 1}r(r-2)p_r>0$. We define $q_c$ by:
\begin{eqnarray}
\label{eq:cap}q_c(\boldp)=q_c &=& \sup\left\{q:\: \sum_{2\leq r<
    q^{-1}}r(r-1)p_r>\sum_{1\leq r} rp_r\right\}.
\end{eqnarray}
Let $P^{(n)}$ be the set of pivotal players in $G(n,\bod)$.
\begin{itemize}
\item[(i)] For $q<q_c$, there are constants $0<\gamma(q,\boldp)\leq s(q,\boldp)$ such that w.h.p.
  $\lim_n\frac{|P^{(n)}|}{n}= \gamma(q,\boldp)$ and for any $v\in
  P^{(n)}$, $\liminf_n \frac{C(v,q)}{n}\geq s(q,\boldp)$.
\item[(ii)] For $q>q_c$, for an uniformly chosen player $v$, we have $C(v,q)=o_p(n)$. The same result holds if $o(n)$ players are chosen uniformly at random.
\end{itemize}
\end{Proposition}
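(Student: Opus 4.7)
The plan is to recast the proposition as a question about the giant component of the low-degree induced subgraph, and then to read off the cascade sizes from the general epidemic results (Theorem~\ref{th:epi} and Theorem~\ref{prop:cascade}) stated later in the paper. In the permanent adoption model, a vertex $i$ of degree $d_i$ switches to $B$ as soon as strictly more than $qd_i$ of its neighbors play $B$; in particular, a vertex of degree $r<q^{-1}$ switches the moment a single neighbor switches. Call such vertices \emph{low-degree} and let $H^{(n)}$ be the subgraph of $G(n,\bod)$ they induce. By definition $P^{(n)}$ is the largest connected component of $H^{(n)}$, and once any $v\in P^{(n)}$ is activated the domino effect in $H^{(n)}$ sweeps through all of $P^{(n)}$, so $C(v,q)\ge |P^{(n)}|$ for every $v\in P^{(n)}$.

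The configuration-model exploration of $H^{(n)}$ is a two-type branching process (alive $=$ low-degree, dead $=$ high-degree) whose size-biased mean number of alive offspring is
\[\nu(q,\boldp)=\frac{1}{\lambda}\sum_{r<q^{-1}} r(r-1)\,p_r.\]
The criticality threshold $\nu=1$ is exactly the condition defining $q_c$ in (\ref{eq:cap}). Conditions~\ref{cond}--\ref{cond2} supply the finite-variance uniform integrability required to invoke the Molloy--Reed/Janson--{\L}uczak giant-component theorem \citep{mr95,jlgiant} on $H^{(n)}$. For $q<q_c$ this yields a constant $\gamma(q,\boldp)>0$ with $|P^{(n)}|/n\pto\gamma(q,\boldp)$, proving the density claim in (i). For (ii), when $q>q_c$ the same branching process is subcritical and every component of $H^{(n)}$ has size $o_p(n)$; since the cascade from a single seed can advance only across low-degree vertices until it accumulates two or more switched neighbors of some high-degree vertex, the early spread is confined to the pivotal component of the seed.

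To pin down $s(q,\boldp)\ge\gamma(q,\boldp)$ in (i) and to rule out any macroscopic spillover through the high-degree vertices in (ii), I would invoke Theorem~\ref{prop:cascade} with the seed ``$P^{(n)}$ plays $B$, every other vertex plays $A$'' and the threshold function $\theta_i=\lfloor qd_i\rfloor+1$. That theorem expresses the final fraction of $B$-players as an explicit functional $s(q,\boldp)$ of the limiting degree distribution, which absorbs the additional non-pivotal adopters produced once a linear mass of $B$ has appeared. For (ii) the same functional, applied via Theorem~\ref{th:epi} with a vanishing seed fraction $\alpha_n=o(1)$, shows that in the subcritical regime the associated fixed-point equation admits only the trivial root, so the final $B$-fraction is $o_p(n)$; this handles both a single uniform seed and any $o(n)$ collection of uniform seeds simultaneously.

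The main obstacle will be the preceding step: applying Theorem~\ref{prop:cascade} with the seed $P^{(n)}$, which is measurable with respect to the completed random graph rather than with respect to the vertex labels alone. I would handle this by a two-round revelation of the configuration-model matching: first expose only the half-edges incident to low-degree vertices (enough to identify $H^{(n)}$ and $P^{(n)}$), then resample the remaining matching and apply Theorem~\ref{prop:cascade} on the residual graph. Verifying that the residual multigraph still satisfies Conditions~\ref{cond}--\ref{cond2} (with the appropriate conditional degree distribution on the high-degree side) and that the structured seed $P^{(n)}$ is permissible under the hypotheses of Theorem~\ref{prop:cascade} is where the bulk of the technical work will live.
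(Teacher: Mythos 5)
Your treatment of $\gamma(q,\boldp)$ is sound and essentially equivalent to the paper's: with $K(d)=\lfloor qd\rfloor$ and $\pi=1$ the cascade condition (\ref{eq:casc}) is exactly $\sum_{r<q^{-1}}r(r-1)p_r>\sum_r rp_r$, i.e.\ the defining inequality of $q_c$, and the paper obtains $|P^{(n)}|/n\pto 1-\bh_1(\bxi)$ from its modified-threshold exploration (noting it could also be read off Janson's percolation results). Your Molloy--Reed argument on the induced low-degree subgraph, with criticality at $\frac1\lambda\sum_{r<q^{-1}}r(r-1)p_r=1$, yields the same constant; the point you gloss over is not finite variance but the fact that the induced subgraph, conditioned on its (random, thinned) degree sequence, is again a uniform configuration model whose empirical degree distribution converges — that is the conditional-uniformity/site-percolation step one must quote or prove before Molloy--Reed applies.

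The genuine gap is in the second half. The bound $\liminf_n C(v,q)/n\geq s(q,\boldp)$ for $v\in P^{(n)}$, and part (ii), are exactly what Theorem~\ref{prop:cascade} asserts for \emph{single} activation: its part (i) gives, for any $u\in P$, $\liminf_n v(\cC(u))/n\geq 1-h_1(\xi)>0$, so one takes $s(q,\boldp)=1-h_1(\xi)$ and $\gamma(q,\boldp)=1-\bh_1(\bxi)\leq s(q,\boldp)$ since $P\subset \cC(u)$; its part (ii) is verbatim part (ii) of the proposition. You instead plan to feed the seed $P^{(n)}$ itself into Theorem~\ref{prop:cascade}/Theorem~\ref{th:epi}, but neither theorem admits such a seed: Theorem~\ref{th:epi} requires i.i.d.\ degree-based Bernoulli activation with $\balpha$ fixed (so your ``$\alpha_n=o(1)$'' application is outside its hypotheses; the passage from $o(n)$ seeds to fixed $\alpha$ and then $\alpha\to0$ is an argument one must make, and it is exactly what the paper does inside the proof of Theorem~\ref{prop:cascade}(ii)), while Theorem~\ref{prop:cascade} takes labeled vertices, and $P^{(n)}$ is measurable with respect to the realized matching. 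You acknowledge this and propose a two-round revelation, but that is precisely the delicate coupling the paper performs inside the proof of Theorem~\ref{prop:cascade} (the ``exception'' mechanism, the times $T_1,T_2$ and the auxiliary graph $\tG$), and your sketch leaves it open: after exposing all half-edges incident to low-degree vertices, the residual high-degree vertices already have exposed, partially active neighborhoods, so the dynamics on the residual graph is not an instance of either theorem as stated. As written, the constant $s(q,\boldp)$ in (i) and the $o_p(n)$ conclusion in (ii) are therefore asserted rather than proved; both follow at once if you simply specialize Theorem~\ref{prop:cascade} with $K(d)=\lfloor qd\rfloor$, $\pi=1$, which is the paper's entire proof.
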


Note that Proposition \ref{prop:cap} is valid only for graphs with a
giant component (for graphs with all components $o_p(n)$, we have
trivially that $C(v,q)=o_p(n)$ for all values of $q$).
We can rewrite (\ref{eq:cap}) as follows: let $D$ be a random variable with distribution $\boldp$, i.e. $\Pb(D=r)=p_r$, then
\begin{eqnarray*}
q_c = \sup\left\{q:\: \Eb\left[ D(D-1)\ind(D<q^{-1})\right]>\Eb[D]\right\}.
\end{eqnarray*}

We can restate Proposition \ref{prop:cap} as follows: let $C^{(n)}$ be the size of a cascade induced by
switching a random player from $A$ to $B$. Proposition \ref{prop:cap}
implies that for $q>q_c$, we have $\Pb(C^{(n)}>\epsilon n)\to 0$ as
$n\to \infty$ for every $\epsilon>0$, whereas for $q<q_c$ there exists
constants $s(q,\boldp)\geq \gamma(q,\boldp)>0$ depending only on $q$ and the
parameters of the graph such that for every $\epsilon>0$, $\liminf_n\Pb(C^{(n)}\geq (s(q,\boldp)-\epsilon) n)\geq \gamma(q,\boldp)$.
Informally, we will say that global cascades
(i.e. reaching a positive fraction of the population) occur
when $q<q_c$ and do not occur when $q>q_c$.
We call $q_c$ defined by (\ref{eq:cap}) the contagion threshold for
the sequence of random networks with degree distribution
$(p_r)$.
This result is in accordance with the heuristic result of \citep{wat02}
(see in particular the cascade condition Eq. 5 in \citep{wat02}).
Proposition \ref{prop:cap} follows
from Theorem \ref{prop:cascade} below which also gives estimates for
the parameters $s(q,\boldp)$ and $\gamma(q,\boldp)$ when $q<q_c$.
Under additional technical conditions, Theorem \ref{prop:cascade} shows that $\lim_n\frac{C(v,q)}{n}= s(q,\boldp)$ in case (i).

Note that the condition $\sum_{r\geq 1} r(r-2)p_r>0$ implies that
$p_0+p_2<1$ so that for $q>1/3$, we have, $\sum_{r<q^{-1}}r(r-1)p_r\leq 2p_2<
\sum_r rp_r$. Hence we have the following elementary corollary:
\begin{Corollary}\label{cor1/3}
For any random graph $G(n,\bod)$ satisfying Conditions \ref{cond} and
\ref{cond2} with asymptotic degree distribution $\boldp=(p_r)_{r=0}^\infty$ such
that $\sum_{r\geq 1}r(r-2)p_r>0$, we have $q_c\leq 1/3$.
\end{Corollary}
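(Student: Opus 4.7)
The plan is to verify directly that every $q>1/3$ fails the inequality appearing in the definition (\ref{eq:cap}) of $q_c$, so that the supremum cannot exceed $1/3$. No probabilistic input is needed beyond the formula for $q_c$ supplied by Proposition \ref{prop:cap}; this is a purely arithmetic consequence of (\ref{eq:cap}) together with the hypothesis $\sum_{r\geq 1}r(r-2)p_r>0$.

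First I would observe that for $q>1/3$ we have $q^{-1}<3$, so the only integer $r$ satisfying $2\leq r<q^{-1}$ is $r=2$. Hence
\[
\sum_{2\leq r<q^{-1}} r(r-1)p_r \;\leq\; 2p_2.
\]
The substantive step is then to show that $2p_2<\lambda=\sum_{r\geq 1} rp_r$ under the standing assumption. For this I would use that the function $r\mapsto r(r-2)$ vanishes on $\{0,2\}$: if $p_0+p_2$ were equal to $1$, then $p_r=0$ for all $r\notin\{0,2\}$ and we would have $\sum_{r\geq 1}r(r-2)p_r=0$, contradicting the hypothesis. Thus $p_0+p_2<1$, which means at least one $p_r$ with $r\in\{1\}\cup\{3,4,\dots\}$ is strictly positive.

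Writing
\[
\lambda - 2p_2 \;=\; p_1 + \sum_{r\geq 3} r p_r,
\]
the previous remark makes this quantity strictly positive, giving $2p_2<\lambda$. Chaining the two bounds yields
\[
\sum_{2\leq r<q^{-1}} r(r-1)p_r \;\leq\; 2p_2 \;<\; \sum_{r\geq 1} rp_r,
\]
so $q$ does not belong to the set whose supremum defines $q_c$. Since this holds for every $q>1/3$, I conclude $q_c\leq 1/3$.

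There is essentially no obstacle; the only thing to be careful about is the logical reduction from $\sum_{r\geq 1}r(r-2)p_r>0$ to $p_0+p_2<1$, which isolates precisely the information needed to rule out the degenerate case in which $2p_2$ could equal $\lambda$.
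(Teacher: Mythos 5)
Your proof is correct and follows essentially the same route as the paper: the paper likewise notes that $\sum_{r\geq 1}r(r-2)p_r>0$ forces $p_0+p_2<1$, so that for $q>1/3$ one has $\sum_{2\leq r<q^{-1}}r(r-1)p_r\leq 2p_2<\sum_r rp_r$, excluding every such $q$ from the set defining $q_c$. Your write-up merely makes the elementary steps (only $r=2$ survives the truncation, and $\lambda-2p_2=p_1+\sum_{r\geq 3}rp_r>0$) more explicit.
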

Recall that for general graphs, it is shown in \citep{mor} that
$q_c\leq 1/2$.
Note also that our bound in Corollary \ref{cor1/3} is tight: for a
$r$-regular network with $r\geq 3$ chosen uniformly at random (i.e. $G(n,\bod)$ such
that $d_i=r$ for all $i\in [n]$) , Proposition \ref{prop:cap} implies that
$q_c=r^{-1}$ corresponding to the contagion threshold of a
$r$-regular tree, see \citep{mor}.
For $r\geq 3$, a $r$-regular random graph is connected w.h.p, so that
for any $q<r^{-1}$, an initial adopter will cause a complete cascade (i.e. reaching all
players).
In particular, in this case, the only possible
equilibria of the game are all players playing $A$ or all players
playing $B$. We will come back to the analysis of equilibria of the game in Section \ref{sec:eq}.

Given a degree distribution $\boldp$, Proposition \ref{prop:cap} gives the typical value of $q_c(\boldp)$ under the uniform distribution on graphs with the prescribed degree distribution. 
A question of interest is to find among all graphs with a given degree sequence, which ones achieve the lowest or highest value of the contagion threshold.
Indeed, the contagion threshold of a graph can be seen as a proxy for its susceptibility to cascades: the higher the contagion threshold is, the more susceptible the graph is to cascading behavior.
Clearly if $d_{\max}$ is the maximal degree in the graph, we have $q_c>\frac{1}{d_{\max}}$. In the case of regular graphs, we saw that the typical contagion threshold is equal to this lower bound: $q_c(d)=\frac{1}{d}$. Note however that there are examples of regular graphs in \citep{mor} with a strictly higher threshold (see Figure 3 there: a $8$-regular graph with contagion threshold $3/8$). 
Hence among all $r$-regular graphs, picking a graph at random will give w.h.p. a graph which is the least susceptible to cascades among those $r$-regular graphs. However it is not always the case that picking a graph at random among all graphs with a given degree sequence will result in the least susceptible one.
There are situations where the typical value of the contagion threshold exceeds the lowest possible value: this is the case in the example presented above in Figure \ref{fig:counter34} for which the contagion threshold is $\frac{1}{4}$ whereas the sequence of random graphs
$G(n,\bod)$ with asymptotic degree distribution such that
$p_3=\frac{8}{9}$ and $p_4=\frac{1}{9}$ has a contagion threshold of
$\frac{1}{3}$. The recent work \citep{bekkt} has a similar focus.

\begin{figure}[htb]
\begin{center}
\includegraphics[width=5cm]{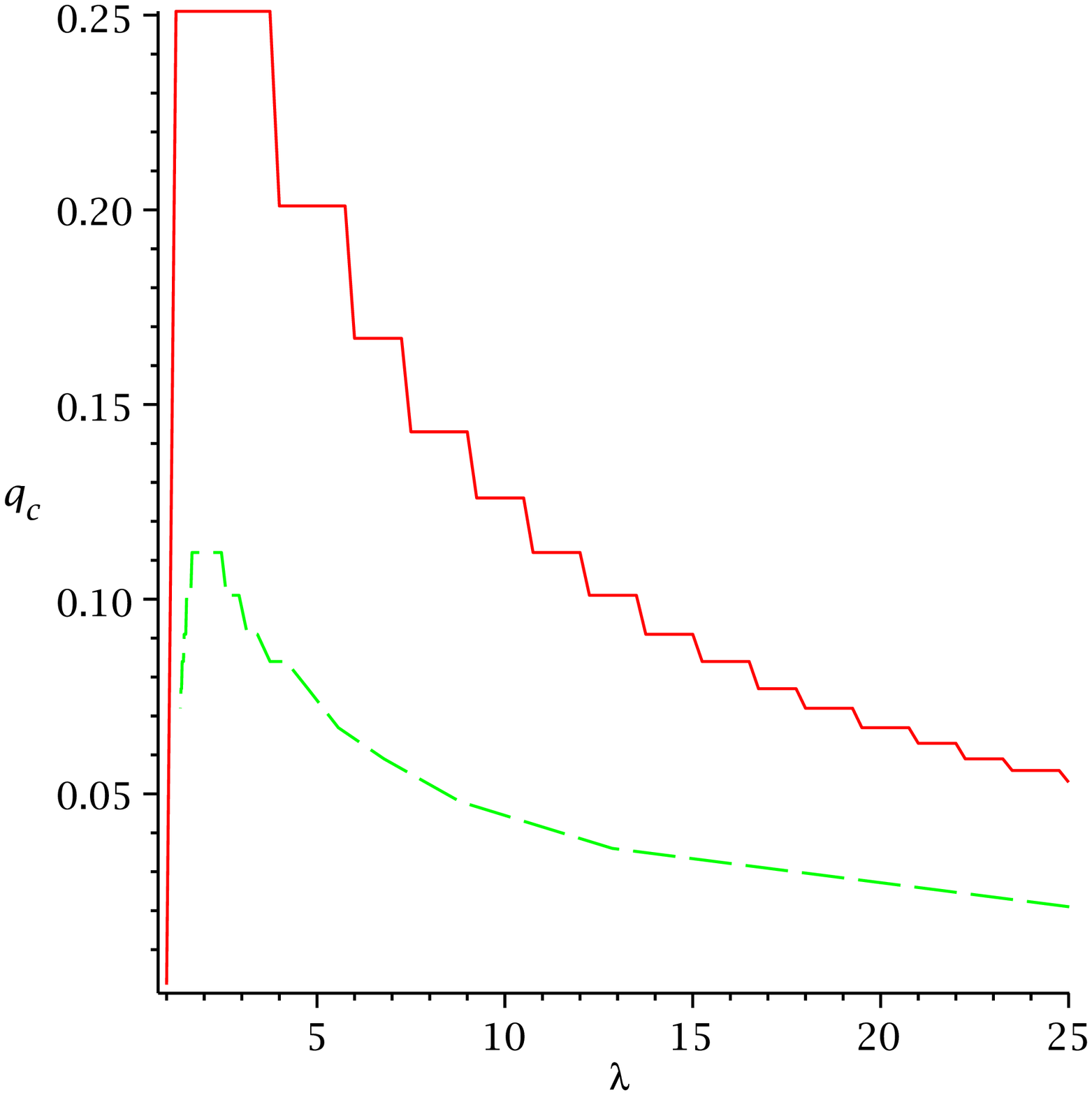}
\hspace{2cm}\includegraphics[width=4cm]{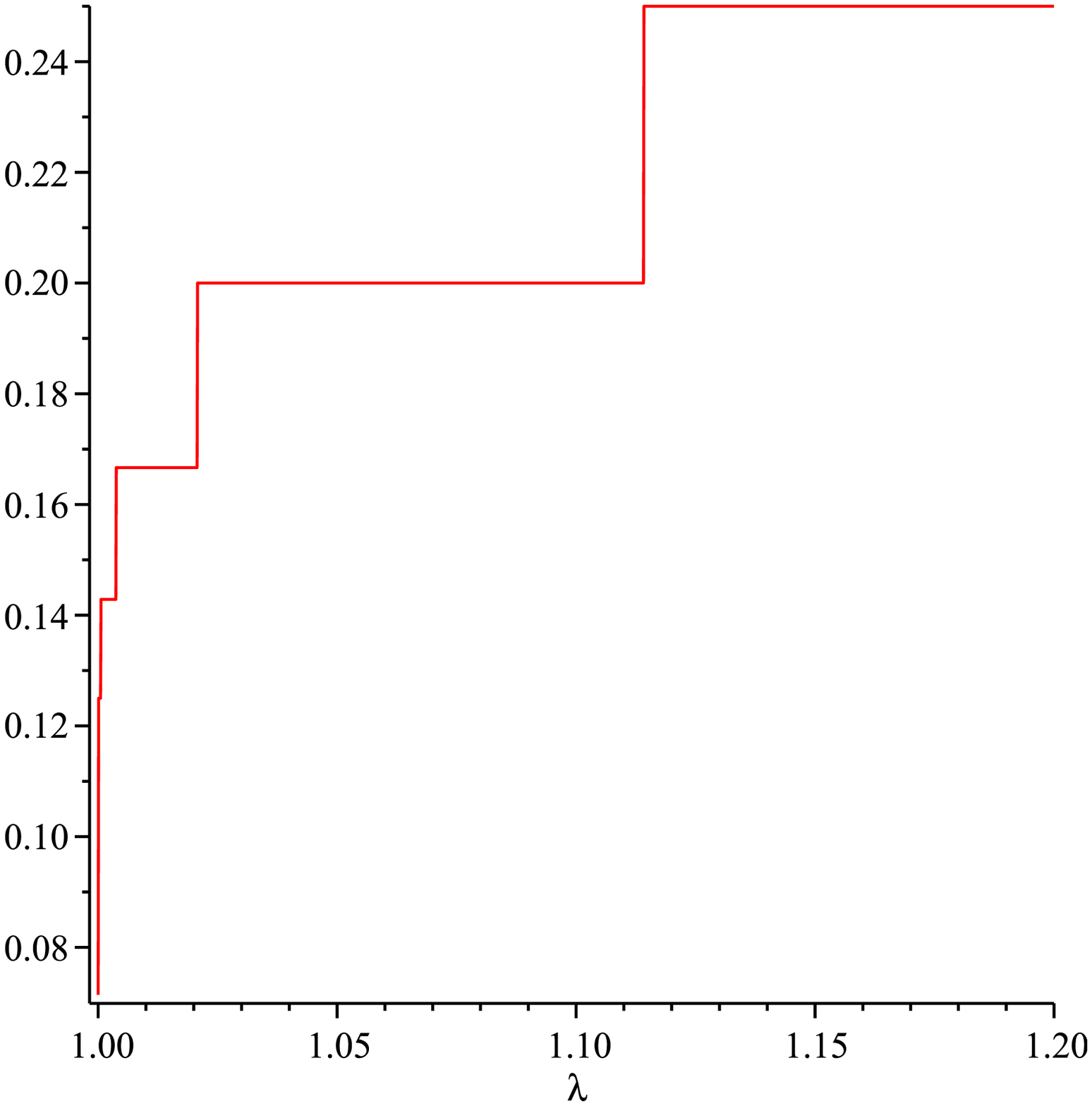}
\psfrag{qc}{$q_c$}
\hspace{25pt} \caption{Left: $q_c(\lambda)$ for Erd\H{o}s-R\'enyi random graphs and
  for power law graphs (dashed curve) as a function of the
  average degree $\lambda$.
Right: $q_c(\lambda)$ for Erd\H{o}s-R\'enyi random graphs as a
  function of $\lambda$ in the range $\lambda\in [1;1.2]$.}
\label{fig:qc}
\end{center}\end{figure}
We now consider some examples.
First the case of Erd\H{o}s-R\'enyi random graphs
$G(n,\lambda/n)$ where each of the ${n\choose 2}$ edges is present
with probability $\lambda/n$ for a fixed parameter $\lambda$.
In this case, we can apply our results with
$p_r=e^{-\lambda}\frac{\lambda^r}{r!}$ for all $r\geq 0$, i.e. the Poisson distribution with mean $\lambda$.
As shown in Figure
\ref{fig:qc} (Left) for the case of Erd\H{o}s-R\'enyi random graphs
$G(n,\lambda/n)$, $q_c$ is
well below $1/3$, indeed we have $q_c\leq 1/4$ for any value of
$\lambda$.
As shown in Figure \ref{fig:qc} (Right), we see that $q_c$ is a
non-decreasing function of $\lambda$ the average degree in the graph for $\lambda\leq 2$.
Clearly on Figure \ref{fig:qc} (Left), $q_c$ is a non-increasing function of
$\lambda$, for $\lambda\geq 4$.
The second curve in Figure \ref{fig:qc} corresponds to the contagion
threshold for a scale-free
random network whose degree distribution $p_r=
\frac{r^{-\gamma}}{\zeta(\gamma)}$ (with $\zeta(\gamma)=\sum
r^{-\gamma}$) is parametrized by the decay parameter $\gamma>1$. We see
that in this case we have $q_c\leq 1/9$.
In other words, in an Erd\H{o}s-R\'enyi random graph, in order to
have a global cascade, the parameter $q$ must be such that any node
with no more than four neighbors must be able to adopt $B$ even if it
has a single adopting neighbor. In the case of the scale free random
network considered, the parameter $q$ must be much lower and any node
with no more than nine neighbors must be able to adopt $B$ with a
single adopting neighbor.
This simply reflects the intuitive idea that for widespread diffusion
to occur there must be a sufficient high frequency of nodes that are
certain to propagate the adoption.

We also observe that in both cases, for $q$ sufficiently low, there are two
critical values for the parameter $\lambda$, $1<\lambda_{i}(q)
<\lambda_{s}(q)$ such that a global cascade for a fixed $q$ is only
possible for $\lambda\in (\lambda_i(q);\lambda_s(q))$.
The heuristic reason for these two thresholds is that a cascade can be
prematurely stopped at high-degree nodes.
For Erd\H{o}s-R\'enyi random graphs,  when $1\leq
\lambda<\lambda_i(q)$, there exists a giant component, i.e. a connected component containing a positive fraction of the nodes.
The high-degree nodes are quite infrequent so that the diffusion should
spread easily. However, for $\lambda$ close to one, the diffusion does
not branch much and progresses along a very thin tree, ``almost a line'', so
that its progression is stopped as soon as it encounters a high-degree
node. Due to the variability of the Poisson distribution, this happens
before the diffusion becomes too big for
$\lambda<\lambda_i(q)$. Nevertheless the condition
$\lambda>\lambda_i(q)$ is not sufficient for a global cascade. Global
diffusion also requires that the network not be too highly connected.
This is reflected by the existence of the second threshold
$\lambda_s(q)$ where a further transition occurs, now in the opposite
direction. For $\lambda>\lambda_s(q)$, the diffusion will not reach a
positive fraction of the population. The intuition here is clear: the
frequency of high-degree nodes is so large that diffusion cannot avoid
them and typically stops there since it is unlikely that a high enough
fraction of their many neighbors eventually adopts. Following \citep{wat02},
we say that these nodes are locally stable.

\begin{figure}[htb]
\begin{center}
\includegraphics[width=4.5cm]{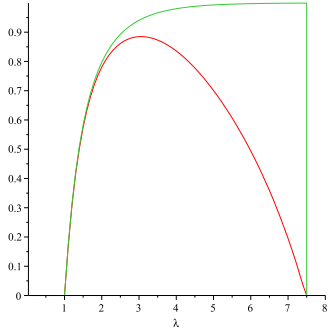}
\psfrag{qc}{$q_c$}
\hspace{25pt} \caption{Size $s(q,\lambda)$ of the cascade (in percent
  of the total population) for Erd\H{o}s-R\'enyi
  random graphs as a function of $\lambda$ the average degree for a
  fixed $q=0.15$. The lower curve gives the asymptotic fraction of
  pivotal players $\gamma(q,\lambda)$.}
\label{fig:cascade}
\end{center}\end{figure}

The proof of our Theorem \ref{prop:cascade} makes previous intuition rigorous for a
more general model of diffusion and gives also more insights on the nature of the phase transitions. We describe it now.
The lower curve in Figure \ref{fig:cascade}
represents the number of pivotal players in an Erd\H{o}s-R\'enyi
random graph as a function of $\lambda$ the average connectivity for
$q^{-1}=6.666...$: hence we keep only the largest connected component
of an Erd\H{o}s-R\'enyi random graph where we removed all vertices of
degree greater than $6$. By the same heuristic argument as above, we
expect two phase transitions for the size of the set of pivotal
players. In the proof of Theorem \ref{prop:cascade}, we show that it
is indeed the case and moreover that the phase transitions
occur at the same values $\lambda_i(q)$ and $\lambda_s(q)$ as can be
seen on Figure \ref{fig:cascade} where the normalized size (i.e. fraction)
$\gamma(q,\lambda)$ of the set of pivotal players is positive only for $\lambda\in (\lambda_i(q),\lambda_s(q))$.
Hence a cascade is possible if and only
if there is a 'giant' component of pivotal players.
Note also that both phase transitions for the pivotal players are
continuous, in the sense that the function $\lambda \mapsto
\gamma(q,\lambda)$ is continuous.
This is not the case for the second phase transition for the normalized size of
the cascade given by $s(q,\lambda)$ in Proposition \ref{prop:cap}: the function $\lambda\mapsto s(q,\lambda)$ is continuous
in $\lambda_i(q)$ but not in $\lambda_s(q)$ as depicted on Figure
\ref{fig:cascade}. This has important consequences: around
$\lambda_i(q)$ the propagation of cascades is limited by the
connectivity of the network as in standard epidemic models. But around
$\lambda_s(q)$, the propagation of cascades is not limited by the
connectivity but by the high-degree nodes which are locally
stable.

To better understand this second phase transition, consider a
dynamical model where at each step a player is chosen at random and
switches to $B$. Once the corresponding cascade is triggered, we come
back to the initial state where every node play $A$ before going to the next step.
Then
for $\lambda$ less than but very close to $\lambda_s(q)$, most
cascades die out before spreading very far. However, a set of pivotal players still exists, so very rarely a cascade will be triggered by
such a player in which case the high connectivity of the network
ensures that it will be extremely large.
As $\lambda$ approaches $\lambda_s(q)$ from below, global cascades
become larger but increasingly rare until they disappear implying a
discontinuous phase transition. 
For low values of $\lambda$, the cascades are infrequent and small. As $\lambda$ increases, their frequencies and sizes also increase until a point where the cascade reaches almost all vertices of the giant component of the graph. Then as $\lambda$ increases, their sizes remain almost constant but their frequencies are decreasing.

\subsection{Equilibria of the game and coexistence}\label{sec:eq}

We considered so far the permanent adoption
model: the only possible transitions are from playing $A$ to $B$.
There is another possible model in which the initial adopters playing
$B$ also apply best-response update. We call this model the
{\it non-monotonic} model (by opposition to the permanent adoption model).
In this model, if the
dynamic converges, the final state will be an equilibrium of the game.
An equilibrium of the game is a fixed point of the best response
dynamics. For example, the states in which all players play $A$ or all
players play $B$ are trivial equilibria of the game.
Note that the permanent adoption model does not necessarily
yield to an equilibrium of the game as the initial seed does not apply
best response dynamics.
To illustrate the differences between the two models consider the graph
of Figure \ref{fig:contrex} for a value of $q=1/3$: if the circled
player switches to $B$, the whole network will eventually switch to
$B$ in the permanent adoption model, whereas the dynamic for the
non-monotonic model will oscillate between the state where only the
circled player plays $B$ and the state where only his two neighbors of
degree two play $B$ (agents still revise their strategies synchronously).
\begin{figure}[htb]
\begin{center}
\includegraphics[width=2cm]{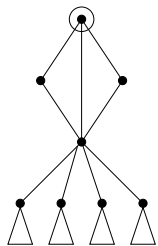}
\hspace{25pt} \caption{The circled player induces a global cascade in the
  permanent adoption model but not in the non-monotonic model for
  $q=1/3$ (each triangle represents a long line of players).}
\label{fig:contrex}
\end{center}\end{figure}

Clearly if a player induces a global cascade for the non-monotonic
model, it will also induce a global cascade in the permanent adoption
model.
As illustrated by the case of Figure \ref{fig:contrex}, the
converse is not true in general.
Hence, a priori, a pivotal player as defined in previous section might
not induce a global cascade in the non-monotonic model.
In \citep{mor}, it is shown that if one can find a finite set of
initial adopters causing a complete cascade for the permanent adoption
model, it is also possible to find another, possibly larger but still
finite, set of initial adopters leading to a complete cascade for the
non-monotonic model. Hence the contagion threshold as defined in
\citep{mor} for infinite graphs is the same for both models.
In our case, we see that if we switch from $A$ to $B$ two pivotal players who are neighbors, then the whole set of pivotal players will
eventually switch to $B$ in the non-monotonic model.
Hence in our analysis on random networks, we also say that the contagion threshold is the same in both
models.
Moreover, both models will have exactly the same dynamics if started
with the set of pivotal players playing $B$ and all other players
playing $A$. In particular, it shows that the dynamic converges and
reaches an equilibrium of the game. Hence we have the following corollary:
\begin{Corollary}\label{prop:equil}
Consider the random graph $G(n,\bod)$ satisfying Conditions \ref{cond}
and \ref{cond2}
with asymptotic degree distribution $\boldp=(p_r)_{r=0}^\infty$.
For $q<q_c$, there exists w.h.p. an equilibrium
of the game in which the number of players $B$ is more than $s(q,\boldp) n$
(defined in Proposition \ref{prop:cap}) and it can be obtained from
the trivial equilibrium with all players playing $A$ by switching only
two neighboring pivotal players. 
\end{Corollary}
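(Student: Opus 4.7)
The plan is to realize the claimed equilibrium as the limit of the non-monotonic (myopic) best-response dynamics started from the configuration where two neighboring pivotal players $u,v$ play $B$ and everyone else plays $A$. The key point will be to show that, for this particular seed, the non-monotonic trajectory coincides step-by-step with the permanent adoption trajectory, whose cascade size is already controlled by Proposition \ref{prop:cap}. The final state of the permanent adoption dynamics is then automatically a fixed point of best response, hence an equilibrium of the game.

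First, two adjacent pivotal players exist w.h.p.\ in $G(n,\bod)$. By Proposition \ref{prop:cap}(i), for $q<q_c$ one has $|P^{(n)}|/n\to\gamma(q,\boldp)>0$ w.h.p., and by construction $P^{(n)}$ is a connected subgraph of $G(n,\bod)$; for $n$ large it must therefore contain at least one edge $\{u,v\}$. Denote by $S_t^P$ and $S_t^{NM}$ the set of $B$-players at time $t$ in the permanent adoption and non-monotonic dynamics respectively, both initialized to $S_0=\{u,v\}$. I claim $S_t^P=S_t^{NM}$ for all $t\geq 0$, and prove this by induction. Given $S_t^P=S_t^{NM}=:S_t$, the two updates can only disagree if some $i\in S_t$ has $N_i^B(S_t)\leq qd_i$, in which case the non-monotonic rule flips $i$ back to $A$. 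This is ruled out case by case: (a) if $i\in\{u,v\}$, pivotality gives $qd_i<1$, and the other of $\{u,v\}$ is a neighbor of $i$ already in $S_t$, so $N_i^B(S_t)\geq 1>qd_i$; (b) if $i$ joined $S^P$ at some earlier time $t'\leq t$ by best response, one had $N_i^B(S_{t'}^P)>qd_i$, and monotonicity of $S^P$ preserves the strict inequality up to time $t$. Hence $S_{t+1}^{NM}=\{i:N_i^B(S_t)>qd_i\}=S_t\cup\{i\notin S_t:N_i^B(S_t)>qd_i\}=S_{t+1}^P$.

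Passing to the limit, $S_t$ is non-decreasing in a finite graph and so stabilizes at some $S_\infty$. The case analysis above shows every $i\in S_\infty$ satisfies $N_i^B(S_\infty)>qd_i$, while every $i\notin S_\infty$ must satisfy $N_i^B(S_\infty)\leq qd_i$ (otherwise the permanent adoption update would add more vertices). Therefore $S_\infty$ is a fixed point of best response, i.e.\ an equilibrium of the game, reached from the all-$A$ configuration by flipping only $u$ and $v$. For the size bound, the cascade from $\{u,v\}$ dominates the cascade from $u$ alone by monotonicity of the permanent adoption dynamics in the seed, and Proposition \ref{prop:cap}(i) gives $\liminf_n C(u,q)/n\geq s(q,\boldp)$ w.h.p., so $|S_\infty|\geq s(q,\boldp)\,n$ up to a $o(n)$ correction.

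The only delicate point is the induction step, which explains why the statement uses two neighboring pivotal seeds rather than one: with a single pivotal seed the non-monotonic dynamics may oscillate (as illustrated by Figure \ref{fig:contrex}), whereas the mutual adjacency of $u$ and $v$ anchors both of them at $B$ forever, which is exactly what is needed to match the non-monotonic and permanent adoption trajectories and therefore to guarantee convergence to an equilibrium.
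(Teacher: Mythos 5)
Your proof is correct and follows essentially the same route as the paper: seed two adjacent pivotal players so that they anchor each other (each has a $B$-neighbor and degree below $q^{-1}$), deduce that the non-monotonic best-response dynamics coincides with the monotone permanent-adoption cascade and hence converges to a best-response fixed point, and bound its size via Proposition \ref{prop:cap}(i). Your explicit induction merely formalizes the coincidence of the two dynamics that the paper asserts (via the intermediate step of the whole pivotal set switching to $B$), so the arguments are substantively identical.
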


Hence for $q<q_c$, the trivial equilibrium all $A$, is rather 'weak' since
two pivotal players can induce a global cascade and there are
$\Omega_p(n)$ such players so that switching two neighbors at random
will after a finite number of trials (i.e. not increasing with $n$) induce
such a global cascade. We call this equilibrium the pivotal equilibrium. Figure \ref{fig:mgeo} shows the average number of trials required. It goes to infinity at both extremes $\lambda_i(q)$ and $\lambda_s(q)$. Moreover, we see that for most values of $\lambda$ inside this interval, the average number of trials is less than $2$.
If $q>q_c$, then by definition if there are pivotal players, their
number must be $o_p(n)$.
Indeed, in the case of $r$-regular graphs, there are no pivotal
players for $q>q_c=r^{-1}$.
Hence, it is either impossible or very difficult (by sampling) to find a set of players with cardinality bounded in $n$ leading to a global cascade since in all cases, their number is $o_p(n)$.

\begin{figure}[htb]
\begin{center}
\includegraphics[width=5cm]{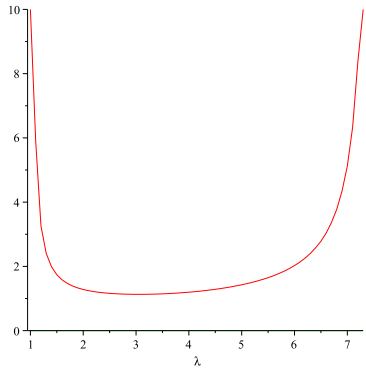}
\hspace{25pt} \caption{Average number of trials required to trigger a global cascade and reach the pivotal equilibrium for Erd\H{o}s-R\'enyi random graphs $G(n,\lambda/n)$ as a function of $\lambda$.}
\label{fig:mgeo}
\end{center}\end{figure}

We now show that equilibria with {\it co-existent} strategies exist. Here by co-existent, we mean that a connected fraction of players $A$ and $B$ are present in the equilibrium. 
More formally, we call a {\it giant component}: a subset of vertices containing a positive fraction (as $n$ tends to infinity) of the total size of the graph such that the corresponding induced graph is connected. We ask if there exists equilibrium with a giant component of players $A$ and $B$?
In the case $q<q_c$, for values of $\lambda$ close to $\lambda_s(q)$, the global cascade reaches almost all nodes of the giant component leaving only very small of connected component of players $A$. However for $\lambda$ close to $\lambda_i(q)$, this is not the case and co-existence is possible as shown by the following proposition:

\begin{Proposition}\label{prop:inactiveer}
In an Erd\H{o}s-R\'enyi random graph $G(n,\lambda/n)$, for $q<q_c$, there exists $\lambda_c(q)\in [\lambda_i(q),\lambda_s(q)]$ such that:
\begin{itemize}
\item for $\lambda \in (\lambda_i(q),\lambda_c(q))$, in the pivotal equilibrium, there is coexistence of a giant component of players $A$ and a giant component of players $B$.
\item for $\lambda>\lambda_c(q)$, in the pivotal equilibrium, there is no giant component of players $A$, although there might be a positive fraction of players $A$.
\end{itemize}
\end{Proposition}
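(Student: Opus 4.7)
The plan is to analyze the induced subgraph on the set $\cA$ of players still playing $A$ in the pivotal equilibrium, and to determine for which values of $\lambda$ this subgraph admits a giant component. Since Theorem \ref{prop:cascade} already supplies a description of the set of $B$-players in terms of a branching-process / local-weak-limit computation, the natural strategy is: (i) read off from that description the local structure of $\cA$, (ii) apply a Molloy--Reed type criterion to $\cA$ to get a condition for the existence of a giant $A$-component, and (iii) study how this condition varies with $\lambda$ on $(\lambda_i(q),\lambda_s(q))$.

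First I would identify, for each $r\ge 0$, the limiting fraction $\alpha_r(\lambda,q)$ of vertices of degree $r$ that remain $A$ in the pivotal equilibrium. The proof of Theorem~\ref{prop:cascade} exhibits a fixed point $\pi=\pi(\lambda,q)\in[0,1]$ representing the probability that a uniformly random half-edge leads to a $B$-vertex; by the local weak convergence of $G(n,\lambda/n)$ to a Galton--Watson tree with Poisson$(\lambda)$ offspring, a vertex of degree $r$ remains $A$ iff, in the tree exploration, its $r$ offspring subtrees do not together carry enough $B$-mass to push it past the threshold $qr$. This determines $\alpha_r$ and, summing, the overall $A$-density $\alpha(\lambda,q)=\sum_r p_r\alpha_r$. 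Each surviving $A$-vertex of degree $r$ inherits a degree in the induced subgraph on $\cA$; by the same tree argument the limiting conditional distribution of its number of $A$-neighbors can be read off from the same fixed point $\pi$.

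Next I would treat the induced graph on $\cA$ as an inhomogeneous random graph and apply the standard giant-component criterion (Molloy--Reed / Janson--Luczak). The key inputs are the $A$-degree distribution computed above and its size-biased version, which together give an explicit ``branching ratio'' $R(\lambda,q)$: a giant $A$-component exists w.h.p. iff $R(\lambda,q)>1$. This reduces the proposition to showing that the function $\lambda\mapsto R(\lambda,q)$ crosses $1$ exactly once on $(\lambda_i(q),\lambda_s(q))$; we then set $\lambda_c(q)$ to be that crossing point. For the endpoints, at $\lambda=\lambda_i(q)^+$ the continuous transition at $\lambda_i(q)$ yields $s(q,\lambda)\to 0$, so $\alpha\to 1$ and the $A$-subgraph essentially equals $G(n,\lambda/n)$, which has a giant component since $\lambda_i(q)>1$; hence $R(\lambda_i(q)^+,q)>1$. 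Conversely, for $\lambda$ close to $\lambda_s(q)^-$, Theorem~\ref{prop:cascade} shows that $s(q,\lambda)$ stays bounded away from $0$ while the cascade wipes out essentially all low-degree vertices in the giant component, so the $A$-subgraph consists mainly of high-degree vertices whose mutual $A$-edges are too sparse to percolate; I would make this quantitative by bounding $R(\lambda,q)$ using that $\alpha_r\to 0$ for small $r$ as $\lambda\uparrow \lambda_s(q)$.

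The main obstacle is step (ii), justifying a Molloy--Reed style criterion on the induced subgraph $\cA$. The issue is that the event ``vertex $v\in\cA$'' depends on the whole neighborhood of $v$ through the cascade, so the $A$-subgraph is not a plain configuration model and edge states between $A$-vertices are positively correlated. The standard workaround, which I would adopt, is to explore the cascade and the induced graph in one coupled process: reveal the graph breadth-first from a seed, propagating cascade status along the way, and track the number of unexplored $A$-half-edges as a Markov chain. Convergence of this chain to a deterministic trajectory (via a differential-equation / fluid-limit argument as in the proof of Theorem~\ref{prop:cascade}) yields the same supercritical/subcritical dichotomy as the heuristic branching ratio $R(\lambda,q)$, and hence the existence of the threshold $\lambda_c(q)$. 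Monotonicity of $R$ in $\lambda$ on $(\lambda_i(q),\lambda_s(q))$, needed for uniqueness of the crossing, follows from the monotonicity of $\pi(\lambda,q)$ established in the proof of Theorem~\ref{prop:cascade}; if monotonicity of $R$ were to fail, one would simply define $\lambda_c(q)$ as the supremum of $\lambda$ with $R>1$, which still gives the stated two-regime picture.
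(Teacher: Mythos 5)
Your overall strategy---reading off the limiting degree profile of the $A$-subgraph from the fixed point $\xi$ of Theorem \ref{prop:cascade}, applying a Molloy--Reed criterion to that subgraph, and then locating a crossing in $\lambda$ on $(\lambda_i(q),\lambda_s(q))$---is exactly the paper's strategy (Propositions \ref{prop:inactive} and \ref{prop:inactiveer}). However, the step you single out as the main obstacle is not an obstacle here, and this is the one substantive place where you diverge from the paper. The final statements of Theorem \ref{th:epi} and of Theorem \ref{prop:cascade}(i) say precisely that the induced subgraph of inactive vertices, conditioned on its degree sequence, has the law of the configuration model $G^*(n^{\cI^*(u)},\boldd^{\cI^*(u)})$: the exploration used in the proofs stops without ever pairing the half-edges that remain attached to inactive vertices, so conditionally there is no correlation between the edges of the $A$-subgraph at all. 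Consequently the giant-component criterion applies verbatim to the limiting degrees $v_{sr}(\cI)$ given in Theorem \ref{prop:cascade}, yielding the condition $\Eb\left[D_\xi(D_\xi-2)\ind(qD\geq D-D_\xi)\right]>0$ of Proposition \ref{prop:inactive}. Your proposed coupled breadth-first exploration with a fluid-limit analysis would in effect re-derive this conditional uniformity and is far heavier than what is needed; it is not wrong in principle, but it misses the structural fact the paper has already put in place exactly for this purpose.

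The genuine gap is in your step (iii). Defining $\lambda_c(q)$ as the supremum of those $\lambda$ with branching ratio $R(\lambda,q)>1$ does \emph{not} ``still give the stated two-regime picture'': without a single-crossing (or monotonicity) property you could have subintervals of $(\lambda_i(q),\lambda_c(q))$ on which $R\leq 1$, and the first bullet of the proposition would fail there. Moreover, the monotonicity you invoke is not ``established in the proof of Theorem \ref{prop:cascade}''---that proof never varies $\lambda$, and no monotonicity of the fixed point in $\lambda$ is proved anywhere in the general setting. What the paper actually does is specific to the Poisson case: with $\pi=1$ and $K(d)=\lfloor qd\rfloor$ it compares the cascade fixed point $\xi(\lambda)$ with the second fixed point $\zeta(\lambda)$ solving $\lambda z^2=\sum_s p_s\sum_{r\geq s-\lfloor qs\rfloor}r(r-1)b_{sr}(z)$, observes that coexistence holds iff $\zeta(\lambda)<\xi(\lambda)$, and argues that both functions are non-increasing on $(\lambda_i(q),\lambda_s(q))$ and intersect exactly once, that intersection being $\lambda_c(q)$. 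Some argument of this kind, carried out for the Erd\H{o}s--R\'enyi functions themselves, is indispensable and is missing from your sketch; by contrast, your endpoint analysis (giant $A$-component just above $\lambda_i(q)$, none just below $\lambda_s(q)$) is sound and matches the paper's picture.
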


\begin{figure}[htb]
\begin{center}
\includegraphics[width=5cm]{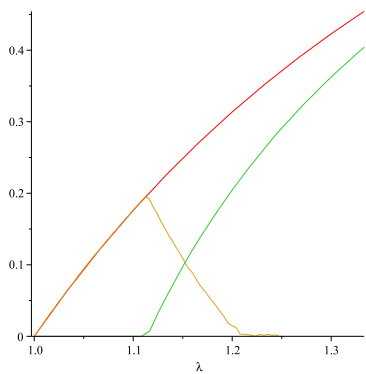}
\hspace{25pt} \caption{Coexistence: the upper (red) curve is the fraction of players in the giant component as a function of the average degree $\lambda$. The lower (green) curve is the fraction of players $B$ in the pivotal equilibrium as a function of $\lambda$ (for $q=0.2$). The last (brown) curve is the size of the giant component of players $A$ in the pivotal equilibrium as a function of $\lambda$.}
\label{fig:inactive}
\end{center}\end{figure}

Figure \ref{fig:inactive} illustrates this proposition in the case of Erd\H{o}s-R\'enyi random graphs $G(n,\lambda/n)$. The difference between the upper (red) and lower (green) curve is exactly the fractions of players $A$ in the pivotal equilibrium while the brown curve represent the size (in percentage of the total population) of the largest connected component of players $A$ in the pivotal equilibrium. This curve reaches zero exactly at $\lambda_c(q)$. Hence for $\lambda>\lambda_c(q)$, we see that there is still a positive fraction of the population playing $A$, but the set of players $A$ is divided in small (i.e. of size $o(n)$) connected components, like 'islands' of players $A$ in a 'sea' of players $B$. Note also that for Erd\H{o}s-R\'enyi random graphs, the value of $\lambda_c(q)$ is close to $\lambda_i(q)$.
Proposition \ref{prop:inactiveer} follows from the following proposition whose proof is given in Section \ref{sec:inactive}.

We first introduce some notations: for $D$ a random variable, we define for $0\leq x\leq 1$, $D_x$ the thinning of $D$ obtained by taking $D$ points and then randomly and independently keeping each of them with probability $x$, i.e. given $D=d$, $D_x\sim \Bi(d,x)$.
We define $g(z,\boldp)=\sum_{s}p_s \sum_{r\geq s-\lfloor sq\rfloor}rb_{sr}(z)$ (corresponding to the function $g$ defined in previous section for $\balpha=0$ and $\pi=1$), $\lambda(\boldp)=\sum_s sp_s$ and
\begin{eqnarray*}
\xi=\sup\left\{z<1,\: \lambda(\boldp) z^2 = g(z,\boldp) \right\}.
\end{eqnarray*}

\begin{Proposition}\label{prop:inactive}
Consider the random graph $G(n,\bod)$ satisfying Conditions \ref{cond}
and \ref{cond2}
with asymptotic degree distribution $\boldp=(p_r)_{r=0}^\infty$. Let $D$ be a random variable with distribution $\boldp$.
If $q<q_c$, then $\xi<1$. Assume moreover that there exists $\epsilon>0$ such that $g(z,\boldp)<0$ for $z\in (\xi-\epsilon,\xi)$.
There is coexistence of a giant component of players $A$ and a giant component of players $B$ in the pivotal equilibrium if
\begin{eqnarray*}
\Eb\left[D_\xi(D_\xi-2)\ind(qD\geq D-D_\xi)\right]> 0.
\end{eqnarray*}
\end{Proposition}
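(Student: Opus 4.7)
The plan is to combine two ingredients: (i) the fluid-limit analysis of the $B$-cascade from the proof of Theorem \ref{prop:cascade}, which identifies $\xi$ as the asymptotic probability that a random half-edge still points to an $A$-vertex in the pivotal equilibrium; and (ii) the Molloy--Reed criterion applied to the subgraph induced on the $A$-players, whose effective degree sequence turns out to be governed by $D_\xi$ restricted to the event $\{qD \geq D - D_\xi\}$.

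First I would verify that $\xi < 1$ when $q < q_c$. At $z = 1$ one has $b_{s,r}(1) = \ind(r = s)$ and the threshold constraint $r \geq s - \lfloor sq \rfloor$ is trivially met at $r = s$, so $g(1, \boldp) = \sum_s s\, p_s = \lambda$ and $z = 1$ is automatically a root of $F(z) := \lambda z^2 - g(z, \boldp) = 0$. A direct differentiation, keeping only the $(1-z)^0$ and $(1-z)^1$ terms near $z=1$, yields $g'(1, \boldp) = \lambda + \sum_{2 \leq s < 1/q} s(s-1)\,p_s$, so $F'(1) = \lambda - \sum_{2 \leq s < 1/q} s(s-1)\, p_s$. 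The condition $q < q_c$ is, by (\ref{eq:cap}), precisely that this derivative is strictly negative, forcing $F > 0$ on a left neighborhood of $1$; any roots of $F$ accumulating at $1$ are therefore excluded, and $\xi < 1$.

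The core step is to identify $\xi$ with the asymptotic fraction of surviving $A$-half-edges. Running the cascade from two neighboring pivotal players via the standard reveal-on-demand construction of the configuration model, one tracks the number of half-edges still attached to vertices that have not yet switched to $B$; a differential-equation approximation, in the spirit of the proof of Theorem \ref{prop:cascade}, shows that this quantity, rescaled by $n$, converges in probability to the largest root of $F$ strictly below $1$, namely $\xi$. The technical sign assumption on $g$ just below $\xi$ ensures that this crossing is transverse, so the fluid trajectory actually halts at $\xi$ rather than skipping over it. Granting this, and conditionally on the residual degree sequence, the subgraph induced on the $A$-vertices is itself a configuration-model random graph whose degree distribution is that of $D_\xi$ conditional on $\{qD \geq D - D_\xi\}$: in the local limit each half-edge of a vertex of degree $D$ points to an $A$-vertex independently with probability $\xi$, and the vertex itself survives as $A$ iff its $B$-neighborhood $D - D_\xi$ stays below the threshold $qD$. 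Applying Molloy--Reed, a giant $A$-component exists iff
\begin{equation*}
\Eb\bigl[D_\xi(D_\xi - 2)\,\ind(qD \geq D - D_\xi)\bigr] > 0,
\end{equation*}
which is exactly the hypothesis. A giant $B$-component comes for free because the cascade is monotone: every new $B$-vertex has a $B$-neighbor at the moment of its switch, so the $B$-set is connected and, by Corollary \ref{prop:equil}, of linear size.

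The main obstacle I anticipate is the identification step, because the threshold cascade creates correlations between the $A$-status of neighboring vertices and the effective degree sequence of the $A$-subgraph is not manifestly independent of its topology, so the Molloy--Reed transfer is not automatic. I would circumvent this by exposing the random matching in parallel with the cascade, so that at the moment a vertex is declared $A$ its remaining half-edges are still uniformly distributed, and then invoke the finite-second-moment bound provided by Condition \ref{cond2} to upgrade the Molloy--Reed criterion to the conditional $A$-subgraph, in the spirit of the configuration-model arguments of \citep{jlgiant}.
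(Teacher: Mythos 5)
Your proposal is correct and follows essentially the same route as the paper: use the cascade analysis of Theorem \ref{prop:cascade} (the $b_{sr}(\xi)$ residual-degree asymptotics and the fact that the inactive subgraph, conditioned on its degree sequence, is again a configuration model) and then apply the Molloy--Reed criterion to the $A$-subgraph, with the giant $B$-component coming from the connected linear-size pivotal set and $\xi<1$ from the sign of the derivative at $z=1$ under $q<q_c$. Only trivial slips: the second-moment bound comes from Condition \ref{cond} (iii), not Condition \ref{cond2} (which is the third-moment condition), and Molloy--Reed is only needed in the ``if'' direction claimed by the proposition.
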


\subsection{Advertising with word of mouth communication}\label{sec:ad}

We consider now scenarios where $\lambda\notin
[\lambda_i(q),\lambda_s(q)]$ and the initial set of adopters grows
linearly with the total population $n$.
More precisely, consider now a firm advertising
to a group of consumers, who share product information among
themselves: potential buyers are not aware of the existence of the
product and the firm undertakes costly informative advertising.
The firm chooses the fraction of individuals who receive
advertisements.
Individuals are located in a social network modeled by a random
network $G(n,\bod)$ with given vertex degrees as in previous section.
However contrary to most work on viral
marketing \citep{rd02}, \citep{kkt03}, we assume that the advertiser has
limited knowledge about the network: the firm only knows the
proportions of individuals having different degrees in the social
network.
One possibility for the firm is to sample individuals randomly
and to decide the costly advertising for this individual based on her
degree (i.e. her number of neighbors).
The action of the firm is then encoded in a vector
$\balpha=(\alpha_d)$, where $\alpha_d$ represents the fraction of
individuals with degree $d$ which are directly targeted by the firm.
These individuals will constitute the seed (in a permanent adoption model) and we call them the early adopters.
Note that the case $\alpha_d=\alpha$ for all $d$ corresponds to a case
where the firm samples individuals uniformly. This might be one
possibility if it is unable to observe their degrees.
In order to optimize its strategy, the firm needs to compute the
expected payoff of its marketing strategy as a function of
$\balpha$. Our results allows to estimate this function in terms of
$\balpha$ and the degree distribution in the social network.

We assume that a buyer might buy either if she receives advertisement from
the firm or if she receives information via word of mouth
communication \citep{ef95}.
More precisely, following \citep{gg08}, we consider the following
general model for the diffusion of information: a buyer obtains
information as soon as one of her neighbors buys the product but she
decides to buy the product when $\Bi(k,\pi)>K(d)$ where $d$ is
her number of neighbors (i.e. degree) and $k$ the number of neighbors having the
product, $\Bi(k,\pi)$ is a Binomial random variable with parameters
$k$ and $\pi\in [0,1]$ and $K(d)$ is a general random variable (depending on $d$).
In words, $\pi$ is the probability that a particular neighbor does
influence the possible buyer. This possible buyer does actually buy
when the number of influential neighbors having bought the product
exceeds a threshold $K(d)$.
Thus, the thresholds $K(d)$ represent the different propensity of
nodes to buy the new product when their neighbors do.
The fact that these are possibly randomly selected is intended to model our
lack of knowledge of their values and a possibly heterogeneous population.
Note that for $K(d)=0$ and $\pi\in [0,1]$, our model of diffusion is
exactly a contact process with probability of contagion between
neighbors equals to $\pi$. This model is also called the SI
(susceptible-infected) model in mathematical epidemiology
\citep{bailey}.
Also for $\pi=1$ and $K(d)=qd$, we recover
the model of \citep{mor} described previously where players $A$ are non-buyers and players $B$ are buyers.

We now give the asymptotic for the final number of buyers for the case
$\pi\in [0,1]$ and $K(s)=qs$, with $q\leq 1$ (the general case with a
random threshold is given in Theorem \ref{th:epi}).
We first need to introduce some notations.
For integers $s\geq 0$ and $0\leq r\leq s$ let $b_{s r}$ denote the
binomial probabilities
$b_{s r}(p) := \Pb(\Bi(s,p)=r) = {s \choose r} p^r(1-p)^{s-r}$.
Given a distribution $\boldp=(p_s)_{s\in \Nbold}$, we
define the functions:
\begin{eqnarray*}
\label{eq:h}h(z;\balpha, \boldp, \pi) &:=&
\sum_{s}(1-\alpha_s)p_s \sum_{r\geq s-\lfloor sq\rfloor}rb_{sr}(1-\pi+\pi z),\\
g(z;\balpha,\boldp,\pi) &:=& \lambda(\boldp)z(1-\pi+\pi z)-h(z;\balpha,\boldp,\pi),\\
\label{eq:h1}h_1(z;\balpha,\boldp,\pi) &:=& \sum_s(1-\alpha_s)p_s\sum_{r\geq
  s-\lfloor sq\rfloor} b_{sr}(1-\pi+\pi z),
\end{eqnarray*}
where $\lambda(\boldp)=\sum_{s}s p_{s}$.
We define
\begin{eqnarray*}
\hz(\balpha,\boldp,\pi) :=\max\left\{z\in [0,1]:\:
g(z;\balpha,\boldp,\pi)= 0\right\}.
\end{eqnarray*}
We refer to Section \ref{sec:lmf} for an intuition behind the
definitions of the functions $h,h_1,g$ and $\hz$ in terms of a branching
process approximation of the local structure of the graph.

\begin{Proposition}\label{prop:ad}
Consider the random graph $G(n,\boldd)$ for a sequence
$(d_i)_1^n$ satisfying Condition \ref{cond}. If the strategy of
the firm is given by $\balpha$, then the final number of buyers is
given by $(1-h_1(\hz,\balpha,\boldp,\pi))n+o_p(n)$ provided
$\hz(\balpha,\boldp,\pi)=0$, or $\hz(\balpha,\boldp,\pi)\in (0,1]$,
and further $g(z;\balpha,\boldp,\pi)<0$ for any $z$ in some interval
$(\hz-\epsilon,\hz)$.
\end{Proposition}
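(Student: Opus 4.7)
The plan is to derive Proposition \ref{prop:ad} as a direct specialization of the general Theorem \ref{th:epi} (to be established in Section \ref{sec:thepi}) to the case of a deterministic threshold $K(d)=\lfloor qd\rfloor$ and an edge-transmission probability $\pi\in[0,1]$. I would first invoke Theorem \ref{th:epi}, then check that for this particular choice the ``limit functions" appearing in its conclusion coincide with the $h$, $h_1$, and $g$ defined above, and finally that the relevant stopping point of the dynamics is precisely $\hat{z}(\balpha,\boldp,\pi)$.

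The shape of the formulas is explained by the branching process heuristic of Section \ref{sec:lmf}. In the final (frozen) state of the dynamics, call a half-edge \emph{unused} if either the underlying edge failed to transmit the information (which happens with probability $1-\pi$) or the edge did transmit but the vertex at its other end is not a buyer (probability $\pi z$, where $z$ denotes the probability that a size-biased half-edge is unused). Thus a half-edge is unused with probability $1-\pi+\pi z$. A vertex of degree $s$ that was not directly targeted by the firm (probability $1-\alpha_s$) fails to buy precisely when at least $s-\lfloor sq\rfloor$ of its $s$ half-edges are unused, i.e.\ when fewer than $\lfloor sq\rfloor+1$ of its neighbors transmit to it. Summing this probability against the degree distribution gives exactly $h_1(z;\balpha,\boldp,\pi)$, so that the asymptotic fraction of non-buyers is $h_1(\hat{z};\balpha,\boldp,\pi)$. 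Similarly, the expected contribution of a non-buying vertex of degree $s$ to the number of unused half-edges is $r\,b_{sr}(1-\pi+\pi z)$ summed over $r\geq s-\lfloor sq\rfloor$, and summing over $s$ yields $h(z;\balpha,\boldp,\pi)$.

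The function $g(z;\balpha,\boldp,\pi)=\lambda(\boldp)z(1-\pi+\pi z)-h(z;\balpha,\boldp,\pi)$ then encodes the consistency condition for the half-edge exploration: the number of unused half-edges emerging from non-buyers (given by $h$) must balance the total number of unused half-edges predicted by the size-biased distribution (the term $\lambda(\boldp) z(1-\pi+\pi z)$). The relevant root is thus $\hat{z}$, the largest zero of $g$ on $[0,1]$. The technical condition $g(z)<0$ on some interval $(\hat{z}-\eps,\hat{z})$ is standard in this line of work and ensures that the normalized exploration process actually crosses zero at $\hat{z}$ rather than stopping at a higher value; without it, Theorem \ref{th:epi} can only control the first down-crossing.

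The main obstacle is of course Theorem \ref{th:epi} itself, whose proof is the substance of Section \ref{sec:thepi}; given that theorem, Proposition \ref{prop:ad} follows by straightforward substitution of the threshold $K(s)=\lfloor sq\rfloor$ and the contagion probability $\pi$ into its conclusion. I would also verify that Condition \ref{cond} supplies the regularity needed for this specialization; in particular, because here $K(d)\leq d$ is bounded by the degree, no additional integrability assumption beyond a finite second moment is required, which is why Proposition \ref{prop:ad} is stated under Condition \ref{cond} alone rather than also Condition \ref{cond2}.
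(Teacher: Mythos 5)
Your proposal is correct and follows exactly the paper's route: Proposition \ref{prop:ad} is obtained by specializing Theorem \ref{th:epi} to the deterministic thresholds $K(s)=\lfloor qs\rfloor$ (i.e.\ $t_{s\ell}=\ind(\ell=\lfloor qs\rfloor)$), under which the general functions $h$, $h_1$, $g$ and the root $\hz$ reduce to those defined in Section \ref{sec:ad}. Only your closing aside is slightly off: Condition \ref{cond2} is not avoided because $K(d)\leq d$, but simply because Theorem \ref{th:epi} itself is proved under Condition \ref{cond} alone (the third-moment condition is only needed for Theorem \ref{prop:cascade}); this does not affect the argument.
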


To illustrate this result, we consider the simple case of
Erd\H{o}s-R\'enyi random graphs $G(n,\lambda/n)$ with $\pi=1$,
$\alpha_d=\alpha$ for all $d$, and $q>q_c(\lambda)$ where $q_c(\lambda)$ is the
contagion threshold for this network (defined in previous section). 
For simplicity, we omit to write explicitly the dependence in $\pi$ and replace the dependence in $\boldp$ by the parameter $\lambda$.

As we will see, for some values of $\lambda$ there is a phase transition in $\alpha$. For a certain value $\alpha_c(\lambda)$, we have: if $\alpha<\alpha_c(\lambda)$, the size of the diffusion is rather small whereas
it becomes very large for $\alpha>\alpha_c(\lambda)$. 
Clearly the advertising
firm has a high interest in reaching the 'critical mass' of
$\alpha_c(\lambda) n$ early adopters. This phase transition is reminiscent of
the one described in previous section. Recall that we are now in a
setting where $q>q_c(\lambda)$ so that a global cascade triggered by a single
(or a small number of) individual(s) uniformly sampled is not possible.
Hence, the firm will have to advertise to a positive fraction of individuals constituting the
seed for the diffusion. The intuition is
the following: if the seed is too small, then each early adopter
starts a small cascade which is stopped by high-degree nodes. When the
seed reaches the critical mass, then the cascades 'coalesce' and are
able to overcome 'barriers' constituted by high-degree nodes so that a
large fraction of the nodes in the 'giant component' of the graph
adopt.
Then increasing the size
of the seed has very little effect since most of the time, the new
early adopters (chosen at random by the firm) are already reached by
the global diffusion. Our Theorem \ref{th:epi} makes this heuristic
rigorous for the general model of diffusion described
above (with random thresholds).

We now give some more insights by exploring different
scenarios for different values of $q$.
\begin{figure}[htb]
\begin{center}
\includegraphics[width=5cm]{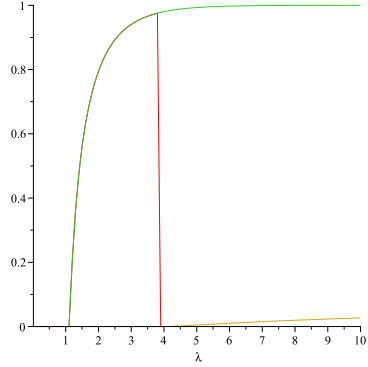}
\hspace{25pt}
\caption{Case $q<1/4$. The green (upper) curve is the size of the final set of
  players $B$ for $\alpha=\alpha_c(\lambda)$ given by
  $\lambda\mapsto 1-h_1(\hz,\alpha_c(\lambda))$ as a function of
  $\lambda$. The brown (lower) curve corresponds to the minimal size
  of the seed for a global cascade
  $\lambda\mapsto\alpha_c(\lambda)$. The red curve corresponds to the
  size of a global cascade when $\alpha=0$ similar as in Figure
  \ref{fig:cascade}.}
\label{fig:qpetit}
\end{center}\end{figure}
Consider first a case where $q<1/4$, then thanks to the results of
previous section, we know that there exists
$1<\lambda_i(q)<\lambda_s(q)$ such that for any $\lambda\in
(\lambda_i(q),\lambda_s(q))$, a global cascade is possible with
positive probability if only one random player switches to $B$.
In particular, if a fraction $\alpha$ of individuals uniformly sampled are playing $A$, then for any
$\alpha>0$, such a global cascade will occur with high
probability. For $\lambda>\lambda_s(q)$, we know that a single player
cannot trigger a global cascade.
More precisely, if players are chosen at random without any information on the
underlying network, any set of
initial adopters with size $o(n)$ cannot trigger a global cascade, as
shown by Theorem \ref{prop:cascade}.
However the final size of the set of players playing $B$ is a
discontinuous function of $\alpha$ the size of the initial seed.
If $\alpha_c(\lambda)$ is defined as the point at which this function
is discontinuous, we have: for $\alpha<\alpha_c(\lambda)$, the final
set of players $B$ will be only slightly larger than $\alpha$ but if
$\alpha>\alpha_c(\lambda)$ the final set of players $B$ will be very large.
Hence we will say that there is a global cascade when
$\alpha>\alpha_c(\lambda)$ and that there is no global cascade when
$\alpha<\alpha_c(\lambda)$. As shown in Figure \ref{fig:qpetit}, for $q<1/4$,
we have $\alpha_c(\lambda)=0$ for $\lambda\in
[\lambda_i(q),\lambda_s(q)]$ and $\alpha_c(\lambda)>0$ for
$\lambda>\lambda_s(q)$.
In Figure \ref{fig:qpetit}, we also see that our definition of global
cascade when $\lambda>\lambda_s(q)$ is consistent with our previous
definition since the function $\lambda\mapsto
1-h_1(\hz,\alpha_c(\lambda))$ giving the size of the final set of
players $B$ when the seed has normalized size $\alpha_c(\lambda)$ is
continuous in $\lambda$ and agrees with the previous curve for the size of the final set of players $B$ when $\lambda<\lambda_s(q)$ and with only one early adopter.

\begin{figure}[htb]
\begin{center}
\includegraphics[width=5cm]{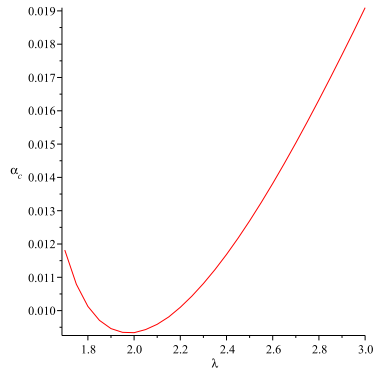}
\hspace{25pt}
\caption{Case $q>1/4$. Minimal size of the seed for a global cascade
  $\lambda\mapsto\alpha_c(\lambda)$ as a function of $\lambda$.}
\label{fig:alphac}\end{center}\end{figure}

We now consider the case where $q>1/4$.
In this case, thanks to the results of previous section, we know that
for any value of $\lambda$, a single initial player $B$ (sampled uniformly) cannot trigger a global cascade. But our definition of $\alpha_c(\lambda)$ still
makes sense and we now have $\alpha_c(\lambda)>0$ for all
$\lambda$. Figure \ref{fig:alphac} gives a plot of the function
$\lambda\mapsto \alpha_c(\lambda)$. We see that again there are two
regimes associated with the low/high connectivity of the graph.
For low values of $\lambda$, the function $\lambda\mapsto
\alpha_c(\lambda)$ is non-increasing in $\lambda$. This situation
corresponds to the intuition that is correct for standard epidemic
models according to which an increase in the connectivity makes the
diffusion easier and hence the size of the critical initial seed will
decrease.

\begin{figure}[htb]
\begin{center}
\includegraphics[width=5cm]{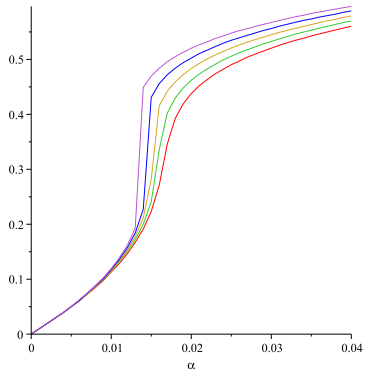}
\hspace{25pt}
\caption{Case $q>1/4$, low connectivity regime. Size of the final set of
  players $B$, $\alpha\mapsto 1-h_1(\hz,\alpha)$ as a function of the
  size of the initial seed $\alpha$ for different values of $\lambda=1.60, 1.61, 1.62, 1.63, 1.64$.}
\label{fig:va1}
\end{center}\end{figure}
Figure \ref{fig:va1} shows the size
$1-h_1(\hz,\alpha)$ of the final set
of players $B$ as a function of the size of the initial seed $\alpha$
for various small values of $\lambda$. We see that for the smallest values of
$\lambda$, there is no discontinuity for the function $\alpha\mapsto
1-h_1(\hz,\alpha)$. In this case $\alpha_c(\lambda)$ is not defined.
We also see that there is a natural monotonicity in $\lambda$: as the
connectivity increases, the diffusion also increases.
However Figure \ref{fig:alphac} shows that for $\lambda\geq 2$, the
function $\lambda\mapsto \alpha_c(\lambda)$ becomes non-decreasing in
$\lambda$. Hence even if the connectivity increases, in order to
trigger a global cascade the size of the initial seed has to increase
too. The intuition for this phenomenon is now clear: in order to
trigger a global cascade, the seed has to overcome the local
stability of high-degree nodes which now dominates the effect of the
increase in connectivity.

\begin{figure}[htb]
\begin{center}
\includegraphics[width=5cm]{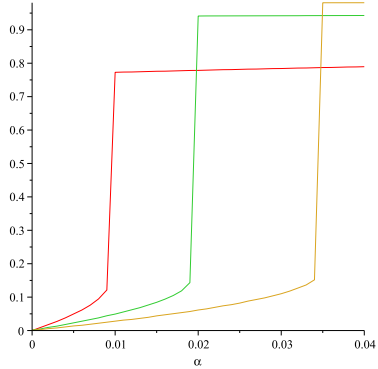}
\hspace{25pt}
\caption{Case $q>1/4$, high connectivity regime. Size of the final set of
  players $B$, $\alpha\mapsto 1-h_1(\hz,\alpha)$ as a function of the
  size of the initial seed $\alpha$ for different values of
  $\lambda=2,3,4$ (red, green, brown respectively).}
\label{fig:va2}
\end{center}\end{figure}

Figure \ref{fig:va2} shows the size
$1-h_1(\hz,\alpha)$ of the final set
of players $B$ as a function of the size of the initial seed $\alpha$
for various value of $\lambda$. Here we see that connectivity hurts
the start of the diffusion: for small value of $\alpha$, an increase
in $\lambda$ results in a lower size for the final set of players $B$!
However when $\alpha$ reaches the critical value $\alpha_c(\lambda)$,
then a global cascade occurs and its size is an increasing function of
$\lambda$. In other words, high connectivity inhibits the global
cascade but once it occurs, it facilitates its spread.

\subsection{Local Mean Field approximation}\label{sec:lmf}

In this subsection, we say that a player $A$ is inactive and a player $B$ is active.
We describe an approximation to the local
structure of the graph and present a heuristic argument which leads
easily to a prediction for the asymptotic probability of being active.
This heuristic allows us to recover the formulas for the functions $h,g,h_1$. 
Results from this section are not needed for the rigorous proofs given in Sections \ref{sec:thepi} and \ref{sec:casc} and are only presented to give some intuition about our technical formulas.
This branching process
approximation is standard in the random graphs literature \citep{dur} and is related to the generating function approach in physics \cite{wat02}. It is called Local Mean Field (LMF) in \citep{sig08,lel:sig09} and in the particular case of an epidemics with independent contagion on each edge, the LMF approximation was turned into a rigorous argument. For our model of diffusion with threshold, this seems unlikely to be straightforward and our proof will not use the LMF approximation.

The LMF model is characterized by its connectivity distribution
$\boldp$ defined in Condition \ref{cond}. We now construct a tree-indexed process.
Let $T$ be a Galton-Watson branching process \citep{atne} with a root
which has offspring distribution $p_r$ and all other nodes have
offspring distribution $p^*_r$ given by $p^*_{r-1} =
\frac{rp_r}{\lambda}$ for all $r\geq 1$. Recall that $p^*_{r-1}$
corresponds to the probability that an
edge points to a node with degree $r$, see \citep{dur}.
The tree $T$ describes the local structure of the graph $G$
(as $n$ tends to infinity): the exploration of the successive
neighborhoods of a given vertex is approximated by the branching
process $T$ as long as the exploration is local (typically restricted
to a finite neighborhood independent of $n$).

We denote by $\o$ the root of the tree and for a node $i$, we denote by
$\gen(i)\in \Nbold$
the generation of $i$, i.e. the length of the minimal path from $\o$
to $i$. Also we denote $i\to j$ if $i$ belongs to the children of
$j$, i.e. $\gen(i)=\gen(j)+1$ and $j$ is on the minimal path from
$\o$ to $i$.
For an edge $(i,j)$ with $i\to j$, we denote by $T_{i\to j}$ the
sub-tree of
$T$ with root $i$ obtained by the deletion of edge $(i,j)$ from $T$.

We now consider the diffusion model described in Section \ref{sec:ad} (with threshold for a vertex of degree $d$ given by $K(d)=qd$ and probability for a neighbor to be influential given by $\pi\in [0,1]$) on the tree $T$, where the
initial set of active nodes is given by a vector
$\sigma=(\sigma_i)_{i\in [n]}$: if $\sigma_i=1$ then vertex $i$ is in the initial set of active vertices, otherwise $\sigma_i=0$. In our model the $\sigma_i$'s are independent
Bernoulli random variables with parameter $\alpha_{d_i}$ where $d_i$
is the degree of node $i$ in the tree.
Thanks to
the tree structure, it is possible to compute the probability of being
active recursively as follows: for any node $i\neq \o$, let $Y_i=1$ if
node $i$ is active on the sub-graph $T_{i\to j}\cup\{(i,j)\}$ with
initial set of active nodes given by the restriction of $\sigma$ to
individuals in $T_{i\to j}$ and with individual $j$ held fix in the
inactive state. Then for any node $i\neq\o$ of degree $d_i$, $i$ becomes active if the number of active influential children exceeds $q d_i$. Hence, we get
\begin{eqnarray}
\label{eq:Y}Y_i=1-(1-\sigma_i)\ind\left(\sum_{\ell\to i}B_{\ell i}Y_\ell \leq q d_i \right),
\end{eqnarray}
where the $B_{\ell i}$'s are independent Bernoulli random variables with
parameter $\pi$.
Then the state of the root is given by
\begin{eqnarray}
\label{eq:X}X_{\o} =1-(1-\sigma_{\o})\ind\left(\sum_{i\to \o}B_{i\o}Y_i \leq qd_{\o} \right).
\end{eqnarray}
In order to compute the distribution of $X_{\o}$, we first solve the
Recursive Distributional Equation (RDE) associated to the
$Y_i$'s: thanks to the tree structure, the random variables $Y_\ell$ in
(\ref{eq:Y}) are i.i.d. and have the same distribution as $Y_i$.
Hence their distribution solve the RDE given by
\begin{eqnarray}
\label{eq:RDE}Y\ed 1-(1-\sigma(D^*+1))\ind\left(\sum_{i=1}^{D^*}B_iY_i \leq q(D^*+1) \right),
\end{eqnarray}
where for a given $d$, the random
variable $\sigma(d)$ is Bernoulli with parameter $\alpha_d$, $B_i$'s are independent Bernoulli
with parameter $\pi$, $D^*$ has distribution $p^*_r$, $Y$ and the
$Y_i$'s are i.i.d. copies (with unknown distribution).
To solve the RDE (\ref{eq:RDE}), we need to compute only the mean of the Bernoulli random variable $Y$. Hence taking expectation in (\ref{eq:RDE}) directly gives a fixed-point equation for this mean and the following lemma follows (its proof is deferred to Appendix \ref{sec:techlem}):

\begin{Lemma}\label{lem:lmf}
The solutions of the RDE (\ref{eq:RDE}) are Bernoulli random variable with mean $x$, where $x$ is any solution of the fixed point equation: $\lambda (1-x)(1-x\pi)=h(1-x;\balpha,\boldp, \pi)$. 
For any such solution, the random variable $X_{\o}$ defined in (\ref{eq:X}) is a Bernoulli random
variable with mean $1-h_1(1-x;\balpha,\boldp, \pi)$.
\end{Lemma}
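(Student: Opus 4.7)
Since the right-hand side of the RDE takes values in $\{0,1\}$, any solution $Y$ is automatically Bernoulli; the task is to identify its mean $x := \Pb(Y=1)$. The plan is to take expectations in \eqref{eq:RDE}, using independence of $\sigma(D^*+1)$, the $B_i$, and the $Y_i$, to produce a scalar fixed-point equation, and then to recognize this equation as $\lambda(1-x)(1-x\pi)=h(1-x;\balpha,\boldp,\pi)$ after a standard binomial identity. Computing the law of $X_\o$ will then be a one-step variant of the same calculation.

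First I would write $\Pb(Y=0)$ by conditioning on $D^*$: since $Y=0$ iff $\sigma(D^*+1)=0$ and $\sum_{i=1}^{D^*} B_iY_i \le q(D^*+1)$, and since $B_iY_i$ is Bernoulli with mean $\pi x$,
\begin{equation*}
1-x = \sum_{d\ge 0} p^*_d\,(1-\alpha_{d+1})\,\Pb\!\left(\Bi(d,\pi x) \le \lfloor q(d+1)\rfloor\right).
\end{equation*}
Substituting $s=d+1$ and $p^*_{s-1}=sp_s/\lambda$, and passing to the complementary binomial with parameter $1-\pi x=1-\pi+\pi(1-x)$,
\begin{equation*}
\lambda(1-x) = \sum_{s\ge 1} sp_s(1-\alpha_s)\,\Pb\!\left(\Bi(s-1,1-\pi+\pi z)\ge s-1-\lfloor qs\rfloor\right),
\end{equation*}
where I set $z=1-x$. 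The key algebraic step is the identity
\begin{equation*}
rb_{s,r}(p) = sp\,b_{s-1,r-1}(p),
\end{equation*}
which, applied to the definition of $h$, gives
\begin{equation*}
h(z;\balpha,\boldp,\pi) = (1-\pi+\pi z)\sum_{s\ge 1} sp_s(1-\alpha_s)\,\Pb\!\left(\Bi(s-1,1-\pi+\pi z)\ge s-1-\lfloor qs\rfloor\right).
\end{equation*}
Combining these two displays with $z=1-x$ and $1-\pi+\pi(1-x)=1-x\pi$ yields exactly $\lambda(1-x)(1-x\pi)=h(1-x;\balpha,\boldp,\pi)$, as claimed.

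For the root, the degree $d_\o$ has distribution $\boldp$ (not $\boldp^*$) and the $D^*$ children each contribute an independent Bernoulli$(\pi x)$ term $B_{i\o}Y_i$; so by the same conditioning,
\begin{equation*}
\Pb(X_\o=0) = \sum_d p_d(1-\alpha_d)\,\Pb\!\left(\Bi(d,1-\pi x)\ge d-\lfloor qd\rfloor\right) = h_1(1-x;\balpha,\boldp,\pi),
\end{equation*}
where the second equality is just the definition of $h_1$. Hence $X_\o$ is Bernoulli with mean $1-h_1(1-x;\balpha,\boldp,\pi)$.

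The only technical obstacle is the index-shift when going from the offspring distribution $\boldp^*$ to the degree distribution (the child's degree equals $D^*+1$, which is why the factor $s p_s/\lambda$ appears) together with the binomial manipulation $rb_{s,r}=sp\,b_{s-1,r-1}$ that converts the factor $r$ in the definition of $h$ into the probability appearing in the RDE; everything else is a direct expectation computation using the independence built into the branching process.
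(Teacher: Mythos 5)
Your proposal is correct and follows essentially the same route as the paper's own proof: take expectations in the RDE, use the size-biased substitution $p^*_{s-1}=sp_s/\lambda$, and apply the binomial identity $rb_{s,r}(p)=sp\,b_{s-1,r-1}(p)$ (the paper uses the same identity in the reverse direction, shifting $(s,j)\mapsto(s+1,j+1)$) to identify the sum with $h(1-x;\balpha,\boldp,\pi)$, then conclude for the root via the definition of $h_1$. No gaps; the computation of $\Pb(X_\o=0)$ and the handling of the index shift are exactly as in the paper.
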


By the change of variable $z=1-x$, we see that Lemma \ref{lem:lmf} is
consistent with Proposition \ref{prop:ad} and allow to recover the functions $h,g,h_1$.
However note that, the RDE (\ref{eq:RDE}) has in general several solutions one being the trivial one with $x=1$ corresponding to the trivial equilibrium where every node are active. The fact that this RDE has several solutions is a major difficulty in order to turn this LMF approximation into a rigorous argument.
Finally, the crucial point allowing previous computation is the fact that in recursion (\ref{eq:Y})
the $Y_i$ can be computed ``bottom-up'' so that the $Y_i$'s of a given generation (from the root) are independent.
The $Y_i$'s in (\ref{eq:X}) encode the
information that $i$ is activated by a node in the subtree of $T$
``below'' $i$ (and not by the root). If one considers a node in the
original graph and runs a directed contagion model on a local neighborhood of
this node where only 'directed' contagion toward this node are
allowed, then the state of the graph seen from this node is well
approximated by the $Y_i$'s.

\section{General model and main results}\label{sec:mod}

We first present the model
for the diffusion process and then our main results for the spread of
the diffusion.

\subsection{Diffusion: percolated threshold model}\label{sec:stm}

In this section, we describe the diffusion for any given finite graph $G$ with vertex set $[n]$. We still denote by $d_i$ the degree of node $i\in [n]$.
From now on, a vertex $i$ is either active or inactive.
In our model, the initial set of active nodes $S$ (the seed) will remain active
during the whole process of the diffusion (results for the non-monotonic model are derived in Section \ref{sec:inactive}).
We will consider {\bf single activation} where the seed contains only one vertex and
{\bf degree based activation} where the vertex $i$ is in the seed with
  probability $\alpha_{d_i}$, where $d_i$ is the degree of the
  vertex independently of the others. In other words, each node $i$ draws independently of each
  other a Bernoulli random variable $\sigma_i$ with parameter
  $\alpha_{d_i}$ and is considered as initially active if $\sigma_i=1$
  and not initially active otherwise.
In the case of degree based activation, we denote $\balpha=(\alpha_d)$
the parameters of this activation.
In particular, if $\alpha_{d} =\alpha$ for all $d$, then a fraction
$\alpha$ chosen uniformly at random among the population is activated
before the diffusion takes place.

{\bf Symmetric threshold model:}
We first present the symmetric threshold model which generalizes the
bootstrap percolation \citep{bp07}: a node becomes active when a certain
threshold fraction of neighbors are already active.
We allow the
threshold fraction to be a random variable with distribution depending on
the degree of the node and such that thresholds are independent among nodes.
Formally, we define for each node $i$ a sequence of random variables in $\Nbold$
denoted by $(K_i(d))_{d=1}^\infty$. The threshold associated to node
$i$ is $k_i=K_i(d_i)$ where $d_i$ is the degree of node $i$.
We assume that for any two vertices $i$ and $j$, the sequences $(K_i(d))_{d=1}^\infty$ and $(K_j(d))_{d=1}^\infty$ are independent and have the same law as a sequence denoted by $(K(d))_{d=1}^\infty$.
For $0\leq \ell\leq  s$, we denote $t_{s\ell}=
\Pb(K(s) = \ell)$ the probability distribution of the threshold for a
node of degree $s$.
For example in the model of \citep{mor} described in the
introduction, we take $K(d)=\lfloor qd \rfloor$ so that
$t_{s\ell}=\ind(\lfloor qs \rfloor =\ell)$.
We will use the notation $\boldk= (k_i)_1^n$ and $\boldt=(t_{s\ell})_{0\leq \ell\leq s}$ denotes the distribution of thresholds.

Now the progressive dynamic of the diffusion on the finite graph $G$ operates as follows: some
set of nodes $S$ starts out being active; all other
nodes are inactive.
Time operates in discrete steps $t=1,2,3,\dots$. At
a given time $t$, any inactive node $i$ becomes
active if its number of active neighbors is at least $K_i(d_i)+1$.
This in turn may cause other nodes to become active.
It is easy to see that the final set of active nodes (after $n$ time
steps if the network is of size $n$) only depends on the initial set
$S$ (and not on the order of the activations) and can be obtained
as follows: set $X_i=\ind(i\in S)$ for all $i$.
Then as long as there exists $i$ such that
$\sum_{j\sim i}X_j> K_i(d_i)$, set $X_i=1$, where $j\sim i$
  means that $i$ and $j$ share an edge in $G$.
When this algorithm finishes, the final state of node $i$ is
represented by $X_i$: $X_i=1$ if node $i$ is active and $X_i=0$
otherwise.

Note that we allow for the possibility $d_i=K_i(d_i)$ in which case,
node $i$ is never activated unless it belongs to the initial set $S$.
Note also that the condition $K_i(d_i)\geq 0$ is actually not
restrictive. If we wish to consider a model where $K_i(d_i)<0$ is
possible, we just need to modify the initial seed $S$ so as to put
node $i$ in $S$ if $K_i(d_i)<0$. Hence this model is equivalent to
ours if we increase $\alpha_{d_i}$ accordingly.

{\bf  Percolated threshold model:}
this model depends on a parameter $\pi\in [0,1]$ and a distribution
of random thresholds $(K(d))_{d\in \Nbold}$ given by
$\boldt=(t_{s\ell})_{\ell\leq s}$ as described above.
Given any graph $G$ and initial set $S$, we now proceed in two phases.
\begin{itemize}
\item bond percolation: randomly delete each edge with probability
  $1-\pi$ independently of all other edges. We denote the resulting
  random graph by $G_\pi$;
\item apply the symmetric threshold model with thresholds $K(d)$:
  set $X_i=\ind(i\in S)$ and then as long as there is $i$ such that
  $\sum_{j\sim_\pi i}X_j>K_i(d_i)$, set $X_i=1$, where $j\sim_\pi i$
  means that $i$ and $j$ share an edge in $G_\pi$ and $d_i$ is the
  degree of node $i$ in the original graph $G$.
\end{itemize}

Clearly if $\pi=1$, this is exactly the symmetric threshold model.
If in addition $K(d)=k$, then this model is known as bootstrap
percolation \citep{bp07}.
On the other hand if $\pi\in (0,1)$ and $K(d)=0$ for any $d$, then
this is the contact process with probability of activation $\pi$ on
each edge.
Note that the percolated threshold model is not equivalent to the
symmetric threshold model on the (bond) percolated graph since
threshold depends on the degree in the original graph (and not in the percolated graph).

\subsection{Diffusion with degree based activation}\label{sec:main}

Recall that for integers $s\geq 0$ and $0\leq r\leq s$, $b_{s r}$ denotes the
binomial probabilities
$b_{s r}(p) := \Pb(\Bi(s,p)=r) = {s \choose r} p^r(1-p)^{s-r}$.

For a graph $G$, let $v(G)$ and $e(G)$ denote the numbers of vertices
and edges in $G$ respectively.
In this section, we assume that the followings are given: a sequence of independent thresholds $\boldk$ drawn according to a distribution $\boldt$, a parameter $\pi\in [0,1]$ and an activation set drawn according to the distribution $\balpha$.
The subgraph of $G(n,\boldd)$ induced by the activated (resp. inactive) nodes at the end of the diffusion is denoted by $H$ (resp. $I$).
For $r\leq s$, we denote by $v_{sr}(I)$ the number of
vertices in $I$ with degree $s$ in $G$ and $r$ in $I$, i.e. the
number of vertices with degree $s$ in $G$ which are not activated and
with $r$ neighbors which are not activated either.
We denote by $v_s(H)$ the number of activated vertices of degree $s$
in $G$ (and with possibly lower degree in $H$).

We define the functions:
\begin{eqnarray*}
\label{eq:h}h(z;\boldp,\boldt,\balpha, \pi) &:=&
\sum_s (1-\alpha_s)p_s\sum_{\ell\leq s} t_{s\ell}\sum_{r\geq s-\ell}rb_{sr}(1-\pi+\pi z),\\
g(z;\boldp,\boldt,\balpha, \pi) &:=&
\lambda(\boldp)z(1-\pi+\pi z)-h(z;\boldp,\boldt,\balpha, \pi),\\
\label{eq:h1}h_1(z;\boldp,\boldt,\balpha, \pi) &:=& \sum_s (1-\alpha_s)p_s\sum_{\ell\leq s}t_{s\ell}\sum_{r\geq s-\ell} b_{sr}(1-\pi+\pi z),
\end{eqnarray*}
where $\lambda(\boldp)=\sum_{s}s p_{s}$. 
We define
\begin{eqnarray}
\label{eq:max}
\hz=\hz(\boldp,\boldt,\balpha, \pi) :=\max\left\{z\in [0,1]:\:
g(z;\boldp,\boldt,\balpha, \pi)= 0\right\}.
\end{eqnarray}
We refer to Appendix \ref{sec:max} for a justification of the use of the $\max$ in (\ref{eq:max}).

\begin{theorem}\label{th:epi}
Consider the random graph $G(n,\boldd)$ (or $G^*(n,\boldd)$) for a sequence
$\boldd=(d_i)_1^n$ satisfying Condition \ref{cond}.
We are given with a sequence of independent thresholds $\boldk$ drawn according to a distribution $\boldt$, a parameter $\pi\in [0,1]$ and an activation set drawn according to the distribution $\balpha$.
For the percolated threshold diffusion on the graph
$G(n,\boldd)$, we have:
if $\hz=0$, or
if $\hz\in (0,1]$, and further
$g(z;\balpha,\boldt,\boldp,\pi)<0$ for any $z$ in some interval $(\hz-\epsilon,\hz)$,
  then
\begin{eqnarray*}
v(H)/n&\stackrel{p}{\to}& 1-h_1(\hz;\boldp,\boldt,\balpha, \pi),\\
v_{sr}(I)/n&\stackrel{p}{\to}&\sum_{i+\ell\geq
  s-r}(1-\alpha_s)p_st_{s\ell}b_{sr}(\hz)b_{s-r,i}(1-\pi),\\
v_{s}(H)/n&\stackrel{p}{\to}&p_s-\sum_{k\geq
  s-\ell}(1-\alpha_s)p_st_{s\ell}b_{sk}(1-\pi+\pi\hz),\\
e(I)/n&\stackrel{p}{\to}&\left(\ind(\pi\neq 1)\frac{\hz}{2(1-\pi+\pi \hz)}+\ind(\pi=1)\frac{1}{2}\right) h(\hz;\boldp,\boldt,\balpha, \pi).
\end{eqnarray*}
If we condition the induced graph $I^*$ of inactive nodes in $G^*(n,\boldd)$ on its degree sequence $\boldd^{I^*}$ and let $n^{I^*}$ be the number of its vertices, then $I^*$ has the distribution of $G^*(n^{I^*},\boldd^{I^*} )$.
\end{theorem}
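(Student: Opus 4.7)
The plan is the now-standard configuration-model exploration method (Janson--Luczak; Amini--Lelarge): couple the diffusion with a sequential reveal of the matching, apply a fluid-limit argument to identify $\hz$ as the halting state, and read off the four limits from the state at the halt. The structural claim about $I^*$ will be a corollary of the exchangeability of the configuration-model matching.

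\textbf{Exploration algorithm.} I would assign independently to every vertex $i$ its (predetermined) degree $d_i$, a random threshold $k_i\sim\boldt$, an activation $\sigma_i\sim\Ber(\alpha_{d_i})$, and to every half-edge a $\Ber(\pi)$ ``liveness'' bit. Rather than sampling the configuration matching up front, I reveal pairs sequentially: mark all half-edges of initially-activated vertices as ``live-active;'' at each step, pick any live-active half-edge, pair it with a uniformly random remaining half-edge (which preserves the $G^*(n,\boldd)$ distribution by exchangeability), and if both liveness bits are $1$ and the partner belongs to an inactive vertex $j$, increment $j$'s live-neighbor counter and activate $j$ (marking its remaining half-edges live-active) when that counter now exceeds $k_j$. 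Stop when no live-active half-edges remain; the activated set is then exactly $H$, and the leftover half-edges are paired uniformly at random to complete $G^*(n,\boldd)$.

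\textbf{Fluid limit and identification of $\hz$.} Parametrize the process so that $\pi(1-z)$ is the running fraction of half-edges that have been live-matched to an active partner. By uniformity of pairing and independence of the liveness bits, in the $n\to\infty$ limit each half-edge of a non-initially-activated vertex of type $(s,\ell)$ becomes live-matched-to-active independently with probability $\pi(1-z)$ at parameter $z$; the vertex is still inactive iff $\Bi(s,\pi(1-z))\le\ell$. Summing degrees over still-inactive vertices yields $h(z;\cdot)$ per $n$, while the number of unmatched half-edges on the ``live-active'' side equals $\lambda(\boldp)z(1-\pi+\pi z)-h(z;\cdot)=-g(z;\cdot)$ per $n$. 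The algorithm halts exactly when $g(z;\cdot)=0$; using Doob's $L^2$-maximal inequality (or, equivalently, Wormald's differential equation method) together with a truncation of high-degree vertices justified by Condition \ref{cond}(iii), the empirical $z$-trajectory concentrates uniformly in probability on the deterministic solution and crosses zero at the largest root, which is $\hz$. The transversality hypothesis $g<0$ on $(\hz-\varepsilon,\hz)$ rules out the trajectory being trapped at an earlier root. This is the chief obstacle: controlling martingale jumps under only a second-moment assumption on the degrees requires a careful truncation of rare high-degree vertices whose activations produce large sudden jumps.

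\textbf{Reading off the four limits.} At the halting time, by the same independence, each half-edge of a non-initially-activated vertex of type $(s,\ell)$ is asymptotically independently (a) live-matched to active, probability $\pi(1-\hz)$; (b) dead, probability $1-\pi$; or (c) inactive-matched (either live-inactive-matched during the algorithm or leftover-then-inactive-matched at the end), probability $\pi\hz$. Summing $\Pb(\text{type-(a) count}\le\ell)$ over vertex types gives $v(H)/n\cp 1-h_1(\hz;\cdot)$ and the formula for $v_s(H)/n$. Decomposing the $s$ half-edges of an inactive vertex as $r$ of type (c) (probability $b_{sr}(\hz)$) and $s-r$ split as $i$ dead / $s-r-i$ of type (a) (probability $b_{s-r,i}(1-\pi)$) subject to the inactivity constraint $s-r-i\le\ell$, i.e.\ $i+\ell\ge s-r$, yields the formula for $v_{sr}(I)/n$. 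For $e(I)/n$: when $\pi=1$ all type-(c) half-edges pair among themselves, giving $h(\hz;\cdot)/2$ edges; when $\pi<1$ one additionally accounts for dead-in-inactive half-edges paired with active-vertex half-edges (which produce no edge of $I$), and a short uniform-matching calculation using $g(\hz;\cdot)=0$ yields the prefactor $\hz/(2(1-\pi+\pi\hz))$. Finally, the statement about $I^*$ follows from the exchangeability of the configuration-model matching restricted to half-edges of inactive vertices: conditional on the inactive set and on $\boldd^{I^*}$, every matching of those half-edges consistent with the prescribed sub-degree sequence is equally likely, so $I^*\sim G^*(n^{I^*},\boldd^{I^*})$.
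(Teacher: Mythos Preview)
Your approach is essentially the paper's: couple the diffusion with a sequential reveal of the configuration-model matching, track the number of ``active-side'' half-edges as a deterministic function plus $o_p(n)$, identify the stopping point as the largest root $\hz$ of $g$, and read off the four quantities from the state at the halt; the claim about $I^*$ follows from exchangeability of the unrevealed matching. The only genuine difference is the concentration tool: the paper runs the reveal in \emph{continuous time} as a rate-$1$ death process on half-edges (balls), so that the counts $U_{sri,\ell}(t)$ are handled by Glivenko--Cantelli uniformly in $t$ (their Lemma for death processes), and then shows the stopping time $\tau\to^p -\ln\hz$; you instead invoke Wormald / $L^2$-martingale bounds. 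Both routes work here and need the same second-moment truncation from Condition~\ref{cond}(iii); the death-process route is slightly cleaner because it sidesteps differential-equation tracking entirely.

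Two slips to fix. First, $\lambda(\boldp)z(1-\pi+\pi z)-h(z;\cdot)=+g(z;\cdot)$, not $-g(z;\cdot)$: the live-active count is (a positive multiple of) $g$, which is why the process runs while $g>0$ and halts at $\hz$. Second, assigning an independent $\Ber(\pi)$ liveness bit to \emph{each} half-edge and requiring both bits to be $1$ yields edge-survival probability $\pi^2$, not $\pi$; the paper implements bond percolation by flipping a single $\Ber(\pi)$ coin at the moment an edge is revealed (equivalently, colour a dying $A$-ball green with probability $1-\pi$). Your downstream formulas already use the correct $\pi(1-z)$ and $1-\pi+\pi z$, so this is a description bug rather than a mathematical one.
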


The proof of this theorem is given in Section \ref{sec:thepi}.
Note that Proposition \ref{prop:ad} follows easily.

\subsection{Diffusion with a single activation}

In this section, we look at the diffusion with one (or a small number $o(n)$ of) initial
active node(s) in $G(n,\boldd)$. 

For $u\in [1,n]$, let $\cC(u)$ (resp. $\cC(1,\dots,j)$) be the subgraph induced by the final
active nodes with initial active node $u$ (resp. initial active nodes
$1,\dots,k$). We also define $\cI(u)$ as the subgraph induced by the inactive nodes with initial active node $u$. The set of vertices of $\cC(u)$ and $\cI(u)$ is a partition of the vertices of the original graph.
To ease notation, we denote $h(z)=h(z;\boldp,\boldt,\bold{0},\pi)$, $g(z)=g(z;\boldp,\boldt,\bold{0},\pi)$ and
$h_1(z)=h_1(z;\boldp,\boldt,\bold{0},\pi)$. We define
\begin{eqnarray}
\label{eq:defxi}\xi :=\sup\left\{z\in [0,1):\:
g(z)= 0\right\}.
\end{eqnarray}

We also define
\begin{eqnarray*}
\bg(z) &=& (1-\pi+\pi z)\left( \lambda z -\sum_{s}sp_s(1-t_{s0})-\sum_{s}sp_st_{s0}(1-\pi+\pi z)^{s-1}\right),\\
\bh_1(z) &=& \sum_{s}p_st_{s0}(1-\pi+\pi z)^s+\sum_s p_s(1-t_{s0}),
\end{eqnarray*}
and $\bxi= \sup\left\{z\in [0,1):\: \bg(z)= 0\right\}$.

We call the following condition the cascade condition:
\begin{eqnarray}
\label{eq:casc}\pi \sum_r r(r-1) p_rt_{r0}>\sum_r rp_r,
\end{eqnarray}
which can be rewritten as $\pi\Eb[D(D-1)\ind(K(D)=0)]>\Eb[D]$ where $D$ is a random variable with distribution $\boldp$.

We denote by $P=P(n,\boldd,\boldk)$ the largest connected component of the graph $G(n,\boldd,\boldk)$ on which we apply a bond percolation with parameter $\pi$ (i.e. we remove each edge independently with probability $1-\pi$) and then apply a site percolation by removing all vertices with $k_i\geq 1$.
The vertices of the connected graph $P$ are called pivotal vertices: for any $u\in P$, we have $P\subset \cC(u)$.

\begin{theorem}\label{prop:cascade}
Consider the random graph $G(n,\boldd)$ (or $G^*(n,\boldd)$) for a sequence
$\boldd=(d_i)_1^n$ satisfying Conditions \ref{cond} and
\ref{cond2}.
We are given with a sequence of independent thresholds $\boldk$ drawn according to a distribution $\boldt$ and a parameter $\pi\in [0,1]$.
\begin{itemize}
\item[(i)] If the cascade condition (\ref{eq:casc}) is satisfied, then
\begin{eqnarray*}
\lim_n\frac{v(P)}{n} = 1-\bh_1(\bxi)>0.
\end{eqnarray*}
Moreover, for any $u\in P$, we have w.h.p.
\begin{eqnarray*}
\lim\inf_n \frac{v(\cC(u))}{n} = \lim\inf \frac{v(\cap_{u\in P}\cC(u))}{n}\geq 1-h_1(\xi) >0,
\end{eqnarray*}
where $\xi$ is defined by (\ref{eq:defxi}). Moreover if $\xi=0$ or $\xi$ is such that there exists $\epsilon>0$ with $g(z)<0$ for $z\in (\xi-\epsilon,\xi)$, then we have for any $u\in P$:
\begin{eqnarray*}
v(\cC(u))/n&\stackrel{p}{\to}& 1-h_1(\xi),\\
v_{sr}(\cI(u))/n&\stackrel{p}{\to}&\sum_{i+\ell\geq
  s-r}p_st_{s\ell}b_{sr}(\xi)b_{s-r,i}(1-\pi),\\
v_{s}(\cC(u))/n&\stackrel{p}{\to}&p_s-\sum_{k\geq
  s-\ell}p_st_{s\ell}b_{sk}(1-\pi+\pi\xi),\\
e(\cI(u))/n&\stackrel{p}{\to}&\left(\ind(\pi\neq 1)\frac{\xi}{2(1-\pi+\pi \xi)}+\ind(\pi=1)\frac{1}{2}\right) h(\xi).
\end{eqnarray*}
If we condition the induced graph $\cI^*(u)$ of inactive nodes in $G^*(n,\boldd)$ on its degree sequence $\boldd^{\cI^*(u)}$ and let $n^{\cI^*(u)}$ be the number of its vertices, then $\cI^*(u)$ has the distribution of $G^*(n^{\cI^*(u)},\boldd^{\cI^*(u)} )$.
\item[(ii)] If $\pi \sum_r r(r-1) r_rt_{r0}<\sum_r rp_r$, then for any $j=o(n)$,
  $v(\cC(1,\dots, j))=o_p(n)$.
\end{itemize}
\end{theorem}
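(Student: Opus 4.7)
My plan is to prove parts (i) and (ii) via two distinct arguments, both leveraging classical giant-component results on the configuration model together with the proof technique behind Theorem \ref{th:epi}.

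For (i), the first step is to compute $v(P)/n$. The graph $P$ is the largest component of the random graph obtained from $G(n,\boldd)$ by bond percolation with parameter $\pi$ followed by site percolation that keeps a vertex of degree $s$ with probability $t_{s0}$. The cascade condition \eqref{eq:casc} is precisely the Molloy--Reed criterion for a giant component in this doubly percolated random graph, and the Janson--Luczak theorem (\citep{jlgiant}) yields $v(P)/n \to 1 - \bh_1(\bxi)$, with $\bg$, $\bh_1$ and $\bxi$ arising from the standard generating-function computation applied to the effective thinned degree distribution.

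Next, for any $u\in P$ the cascade $\cC(u)$ contains $P$: each vertex of $P$ has threshold $0$ and is joined to $u$ by a live-edge path through $P$-vertices, so once $u$ activates each such vertex inherits an active neighbor and activates in turn. A consequence is $\cC(u)=\cC(u')$ for any two $u,u'\in P$ (the cascades from different pivotal seeds coincide after the first wave sweeps $P$), which accounts for the equality $\liminf v(\cC(u))/n = \liminf v(\cap_{u\in P}\cC(u))/n$. To control $v(\cC(u))$ I would rerun the half-edge exploration / fluid-limit argument that drives the proof of Theorem \ref{th:epi} for the symmetric threshold diffusion, but initialized with $P$ as the active set rather than an i.i.d.\ degree-based seed. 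The drift equations depend only on the current active/inactive half-edge counts and on $\boldt,\pi$, so they are identical to those at $\balpha=0$; only the initial condition changes, pinning the trajectory away from the trivial fixed point $z=1$ and steering it to the next root $\xi$ of $g(\,\cdot\,;\boldp,\boldt,0,\pi)$. Under the stability hypothesis $g(z)<0$ on $(\xi-\epsilon,\xi)$ the exploration terminates exactly at $\xi$ and the four claimed asymptotics for $v(\cC(u))$, $v_{sr}(\cI(u))$, $v_s(\cC(u))$ and $e(\cI(u))$ follow from the same computations as in Theorem \ref{th:epi}; without stability one only gets the $\liminf$ lower bound. The configuration-model property of $\cI^*(u)$ is inherited from the exploration: after pairing off the half-edges consumed by the cascade, the unmatched half-edges incident to inactive vertices still form a uniform random matching.

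For part (ii), the reversed strict inequality is the subcritical Molloy--Reed condition for the same doubly percolated threshold-zero random graph, so all its components have size $o_p(n)$. A seed of $j=o(n)$ vertices activates at most the union of $j$ such threshold-zero components, which is $o_p(n)$. For positive-threshold vertices, running the same exploration from $\balpha=0$ gives negative drift from the start, the trajectory terminates at the trivial root $z=1$, and the total activated mass is $o_p(n)$. The main obstacle I expect is setting up the exploration rigorously with $P$ as the initial active set: unlike the i.i.d.\ degree-based seed in Theorem \ref{th:epi}, $P$ is itself a nontrivial function of the graph realization, so one must argue that, conditional on $P$, the remaining unmatched half-edges of $G^*(n,\boldd)$ still obey the symmetry needed for the half-edge matching argument to import the drift from Theorem \ref{th:epi} verbatim. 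The stability hypothesis on $\xi$ is then used precisely to guarantee that the exploration reaches $\xi$ rather than some intermediate root, delivering the exact formulas rather than merely a lower bound.
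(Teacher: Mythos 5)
You have correctly identified where the difficulty lies, but you have not resolved it, and this is a genuine gap rather than a routine detail. Your plan for part (i) is to rerun the exploration of Theorem \ref{th:epi} ``initialized with $P$ as the active set'' and import the drift at $\balpha=0$ verbatim. This is precisely the step that fails: $P$ is the largest component of the doubly percolated graph, and identifying it requires revealing a positive fraction of the half-edge pairing (all components of threshold-zero vertices in the percolated graph, including their boundary edges to positive-threshold vertices), after which the remaining half-edges are no longer a uniform configuration-model matching conditional on $P$; moreover the residual degree/threshold composition of the inactive part is correlated with $P$, so the drift is not simply the $\balpha=0$ drift with a shifted initial condition. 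The paper never conditions on $P$: it starts the deletion process from a single vertex and, whenever no type-$B$ half-edges remain, makes a random ``exception'' (the Janson--Luczak giant-component device), controls the cumulative effect of exceptions via $\tA(t)-A(t)\le -\inf_{s\le t}\tB(s)+\dm$ with $\tB(t)/n\to f(e^{-t})$, shows the exception times satisfy $T_1\to^p 0$ and $T_2\to^p -\ln\xi$, proves $P\subset\cC''$ by a contradiction argument (if $\cC''\cap P=\emptyset$, the exception at $T_2$ hits $P$ with probability at least $\gamma$ and would produce a removed set of size $\Omega(n)$ between $T_2$ and $T_3$, contradicting $v(\cC''')=o_p(n)$), and finally identifies $\cC(u)$, $u\in P$, with the big removed set by rerunning the algorithm on $\tG=G\setminus\cC'$ with a consistent tie-breaking via attached uniform labels. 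None of this machinery is present in your proposal, and without it the claimed convergence of $v(\cC(u))/n$ and the companion statistics is unsupported; the matching upper bound in the paper comes separately, from Theorem \ref{th:epi} with uniform activation $\alpha\to 0$ and $\hz(\alpha)\to\xi$ under the stability hypothesis.

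Two further points. First, when the stability hypothesis on $\xi$ fails you assert that ``one only gets the $\liminf$ lower bound,'' but you give no argument; the paper obtains it by perturbing the percolation parameter ($\pi'<\pi$ close to $\pi$, showing $\partial g/\partial\pi(\xi,\pi)>0$, hence $\xi(\pi')>\xi$ with the required negativity of $g(\cdot,\pi')$ just below $\xi(\pi')$, and letting $\pi'\uparrow\pi$), and some such device is needed. Second, in part (ii) your containment claim --- that a seed of $j$ vertices activates at most the union of $j$ threshold-zero components --- is false: a vertex with threshold $k\ge 1$ adjacent to several activated components becomes active and can relay the cascade further. The correct route (and the paper's) is monotonicity in the seed: couple the $o(n)$ seed with degree-based activation at level $\alpha>0$, apply Theorem \ref{th:epi}, and use $g'(1)>0$ to get $\hz(\alpha)\to 1$ as $\alpha\to 0$; your final sentence about negative drift near $z=1$ gestures at this but does not substitute for it as written. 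The computation of $v(P)/n$ via the percolation results of \citep{j08} is fine and is explicitly acknowledged in the paper as an alternative to its self-contained argument with the modified thresholds $\bK_i(d_i)=(d_i+1)\ind(K_i(d_i)\ge 1)$.
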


A proof of this theorem is given in Section \ref{sec:casc}.
We end this section with
some remarks:
if the distribution of $K(d)$ does not depend on $d$, then the cascade condition becomes with $D$ a random variable with distribution $\boldp$:
\begin{eqnarray*}
\pi\Pb(K=0)>\frac{\Eb[D]}{\Eb[D(D-1)]}.
\end{eqnarray*}
In particular, if $K=0$ (i.e. the diffusion is a simple exploration process), then we find the well-known condition for
the existence of a 'giant component'. This corresponds to existing
results in the literature see in particular Theorem 3.9 in \citep{j08}
which extend the standard result of \citep{mr95}.
More generally, in the case $\pi\in [0,1]$ and $K=0$ (corresponding to the contact process), a simple computation shows that
\begin{eqnarray*}
h(z;\balpha,\boldp,\pi) &=&
(1-\alpha)(1-\pi+\pi z)\phi'_D(1-{\pi}+{\pi} z)\\
h_1(z;\balpha,\boldp,\pi) &=& (1-\alpha)\phi_D(1-{\pi}+{\pi} z),
\end{eqnarray*}
where $\phi_D(x)=\Eb[x^D]$ is the generating function of the asymptotic
degree distribution.
Applying Theorems \ref{th:epi} and \ref{prop:cascade} allow to obtain results for the contact process.
Similarly, the bootstrap percolation has been studied in random regular graphs \citep{bp07} and random graphs with given vertex degrees \citep{hamed}. The bootstrap percolation corresponds to the particular case of the percolated threshold model with $\pi=1$ and $K(d)=\theta\geq 0$ and our Theorems \ref{th:epi} and \ref{prop:cascade} allow to recover results for the size of the diffusion.
Finally, the case where $K(d)=qd$ and $\pi=1$ implies directly Proposition \ref{prop:cap}.

\section{Proof of Theorem \ref{th:epi}}\label{sec:thepi}

\subsection{Sketch of the proof}\label{sec:sketch}

It is well-known that it is often simpler to study the random
multigraph $G^*(n,\boldd)$ with given vertex sequence
$\boldd=(d_i)_1^n$ defined in Section \ref{sec:conf}.
We consider asymptotics as the number of vertices tends to infinity
and thus assume throughout the paper that we are given, for each $n$,
a sequence $\boldd=(d^{(n)}_i)_1^n$ with $\sum_i d^{(n)}_i$
even.
We may obtain $G(n,\boldd)$ by conditioning the multigraph
$G^*(n,\boldd)$ on being a simple graph. By \citep{j06}, we know that
the condition $\sum_i d_i^2=O(n)$ implies $\lim\inf \Pb(G^*(n,\boldd)
\mbox{ is simple})>0$. In this case, many results transfer immediately
from $G^*(n,\boldd)$ to $G(n,\boldd)$, for example, every result of
the type $\Pb(\cE_n)\to0$ for some events $\cE_n$, and thus every
result saying that some parameter converges in probability to some
non-random value. This includes every results in the present paper.
Henceforth, we will in this paper study the random multigraph $G^*(n,\boldd)$ and
in a last step (left to the reader) transfer the results to
$G(n,\boldd)$ by conditioning.

We run the dynamic of the diffusion of Section \ref{sec:stm} on a
general graph $G^*(n,\boldd)$ in order to compute the final size
of the diffusion in a similar way as in \citep{jl07}. The main point
here consists in coupling the construction of the graph with the
dynamic of the diffusion. The main difference with previous analysis consists in adding to each vertex a threshold with distribution $\boldt$ independently form the graph $G^*(n,\boldd)$ as described in Section \ref{sec:stm}.
The proof of Theorem \ref{th:epi} follows then easily see Section \ref{sec:spread}.
In order to prove Theorem \ref{prop:cascade}, we use the same idea of
coupling (in a similar spirit as in \citep{jlgiant} for the analysis of the
giant component) but we have to deal with an additional difficulty due
to the following lack of symmetry: if $\cC(u)$ is the final set of the
diffusion with only $u$ as initial active node, then for any $v\in
\cC(u)$, we do not have in general $\cC(u)= \cC(v)$.
We take care of this difficulty in Section \ref{sec:casc}.
In the next section, we present a preliminary lemma that will be used
in the proofs.

\subsection{A Lemma for death processes}\label{sec:lem}

A pure death process with rate 1 is a process that starts with some
number of balls whose lifetime are i.i.d. rate 1 exponentials. Each time a ball dies, it is removed.
Now consider $n$ bins with independent rate $1$ death processes.
To each bin, we attach a couple $(d_i,k_i)$ where $d_i$ is the number of balls
at time 0 in the bin and $k_i=K_i(d_i)$ is the threshold corresponding
to the bin. We now modify the death process as follows: all balls are
initially white.
For any living ball, when it dies, with probability $1-\pi$ and
independently of everything else, we color it green (an leave it in he bin), otherwise we remove it from the bin.
Let $W^{(n)}_j(t)$ and $G^{(n)}_j(t)$ denote the number of white
and green balls respectively in bin $j$ at time $t$, where
$j=1,\dots, n$ and $t\geq 0$.

Let 
$U^{(n)}_{sri,\ell}(t)$ be the
number of bins that have $s$ balls at time $0$ and $r$ white balls, $i$
green balls at time $t$ and threshold $\ell$, i.e. $U^{(n)}_{sri,\ell}(t) =
|\{j\in [n], \:W^{(n)}_j(t) =r,\: G^{(n)}_j(t)=i,\:d_j=s,\:k_j=
\ell\}|$.
In what follows we suppress the superscripts to lighten the
notation. The following lemma is an extension of Lemma 4.4 in
\citep{jl07}:
\begin{Lemma}\label{lem:death}
Consider the $n$ independent processes defined above and assume that
the sequence $\boldd=(d_i)_1^n$ satisfies Condition \ref{cond} where
$\boldp=(p_r)_{r=0}^\infty$ can be a defective probability distribution:
$\sum_r p_r\leq 1$. We assume that the thresholds $\{(K_i(d))_{d=1}^\infty\}_{i\in [n]}$ are independent in $i$ and have distribution $\boldt=(t_{s\ell})_{0\leq \ell\leq s}$ as described in Section \ref{sec:stm}.
Then, with the above notation, as $n\to \infty$,
\begin{eqnarray}
\label{eq:limsup}\sup_{t\geq 0} \sum_{\ell\leq s}\sum_{r=0}^s
\sum_{i=0}^{s-r}r\left|U_{sri,\ell}(t)/n-p_st_{s\ell}b_{s
  r}(e^{-t})b_{s-r,i}(1-\pi)\right| \pto 0.
\end{eqnarray}
\end{Lemma}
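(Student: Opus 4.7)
The plan is to combine a first-moment calculation, truncation of the sum in the degree $s$ via Condition \ref{cond}(iii), and a uniform-in-$t$ law of large numbers for each fixed $s$.

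For the first moment, each bin $j$ with $d_j = s$ is described by $s$ i.i.d.\ $\mathrm{Exp}(1)$ lifetimes together with $s$ i.i.d.\ $\Ber(1-\pi)$ color marks (independent of the threshold $k_j$ drawn from $(t_{s\cdot})$). Splitting each ball into the three states ``alive'', ``dead-and-green'', ``dead-and-removed'' produces a trinomial with parameters $(e^{-t}, (1-\pi)(1-e^{-t}), \pi(1-e^{-t}))$, so $\Pb(W_j(t)=r, G_j(t)=i \mid d_j=s) = b_{sr}(e^{-t})\, b_{s-r,i}(1-\pi)$. Hence $\Eb[U_{sri,\ell}(t)]/n = \frac{|\{j:d_j=s\}|}{n}\, t_{s\ell}\, b_{sr}(e^{-t})\, b_{s-r,i}(1-\pi)$, and by Condition \ref{cond}(i) this converges to $p_s t_{s\ell} b_{sr}(e^{-t}) b_{s-r,i}(1-\pi)$ uniformly in $t$ (the $t$-dependent factor being continuous and bounded by $1$).

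For the tail truncation in $s$, using $r \leq s$ and $\sum_{\ell, r, i} U_{sri,\ell}(t) \leq |\{j : d_j = s\}|$,
\[ \sum_{s > N}\sum_{\ell \leq s}\sum_r \sum_i r\, U_{sri,\ell}(t)/n \;\leq\; \sum_{s > N} s\,\frac{|\{j : d_j = s\}|}{n} \;\leq\; \frac{1}{N}\cdot\frac{1}{n}\sum_i d_i^2 \;=\; O(1/N), \]
uniformly in $t$ and $n$ by Condition \ref{cond}(iii). Applying Fatou to $s^2 |\{j:d_j=s\}|/n$ gives $\sum_s s^2 p_s < \infty$, so the analogous tail of the limit in (\ref{eq:limsup}) obeys the same $O(1/N)$ bound.

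For the finite-$s$ contribution, I would write $U_{sri,\ell}(t) = \sum_{j : d_j = s} F_j(t)$ with $F_j(t) = \ind(k_j = \ell,\, W_j(t) = r,\, G_j(t) = i)$; each $F_j(\cdot)$ is a $\{0,1\}$-valued c\`adl\`ag process with at most $s$ jumps. Applying a Vapnik-Chervonenkis uniform law of large numbers to this one-parameter family of indicators (or, equivalently, discretizing $[0, T]$ on a fine grid, invoking the pointwise LLN at grid points, bounding inter-grid oscillation by the per-bin jump count, and using $\Eb[W_j(t)] \leq s e^{-T}$ to handle $t > T$) yields $\sup_{t \geq 0}|U_{sri,\ell}(t)/n - \Eb[U_{sri,\ell}(t)]/n| \pto 0$ for each fixed $(s, r, i, \ell)$. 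Combining the three ingredients---choose $N$ so that both tails are below $\epsilon$, then add the finitely many uniformly convergent terms for $s \leq N$---gives (\ref{eq:limsup}). The main obstacle is this last uniformity in $t$: the VC/discretization step is classical but is the only non-bookkeeping ingredient.
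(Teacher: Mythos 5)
Your proposal is correct, and while it shares the overall skeleton of the paper's argument (exact first moment, a uniform-in-$t$ law of large numbers for each fixed $(s,\ell)$, truncation of the tail in $s$), the key uniformity step is handled by a genuinely different device. The paper reduces to the classical Glivenko--Cantelli theorem through a monotone reduction: for each $j\le s$ it introduces the i.i.d.\ times $T_k$ at which the $j$th ball of a bin is removed or recolored, notes that $|\{k:\,T_k\le t\}|=\sum_{r\le s-j}\sum_i U_{sri,\ell}(t)$ is an empirical distribution function, gets uniform convergence of these cumulative counts, differences over $j$, and then uses a separate law of large numbers for the green balls, whose conditional law given $W_k(t)=r$ is $\Bi(s-r,1-\pi)$. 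You instead treat each bin's indicator $F_j(t)$ directly as a one-parameter family and invoke a VC-type uniform LLN or a grid discretization with jump-count control; this is valid and more generic, and it can in fact be folded back into plain Glivenko--Cantelli if you wish, since $(W_j,G_j)$ visits each state at most once, so each $F_j$ is the indicator of a single random interval $[a_j,b_j)$ and $F_j(t)=\ind(a_j\le t)-\ind(b_j\le t)$ is a difference of two empirical distribution functions. Your truncation uses the second moment from Condition \ref{cond}(iii) (an $O(1/N)$ tail, with Fatou for the limiting tail), whereas the paper uses convergence of the empirical first moment $\sum_i d_i/n\to\lambda$; both suffice. One small wrinkle in your discretization variant: the treatment of $t>T$ via $\Eb[W_j(t)]\le s e^{-T}$ only controls the terms with $r\ge 1$, so as stated it does not give $\sup_{t\ge0}|U_{s0i,\ell}(t)/n-\Eb U_{s0i,\ell}(t)/n|\pto0$; this is harmless because the sum in (\ref{eq:limsup}) carries the factor $r$ (and for $r=0$ the indicator is eventually constant in $t$, the bin being frozen once $W_j=0$), but it deserves a sentence. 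In exchange for the paper's more elementary toolkit, your formulation needs no monotone bookkeeping and transfers immediately to other functionals of the bin states.
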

\begin{proof}
Let $n_{s\ell}= |\{i:\: d_i=s, k_i= \ell\}|$.
In particular
$\sum_{r,i}U_{sri,\ell}(0) =U_{ss0,\ell}(0) =
n_{s\ell}$.
First fix integers $s,\ell$ and $j$ with $0\leq  j\leq s$.
Consider the $n_{s\ell}$ bins that
start with $s$ balls and with threshold $\ell$. For $k=1,\dots,n_{s\ell}$, let $T_k$ be the time the $j$th ball is removed or recolored in the $k$th such bin.
Then $|\{k:\: T_k\leq t\}|=\sum_{r=0}^{s-j}\sum_{i=0}^{s-r}U_{sri,\ell}(t)$.
Moreover, for the $k$th bin, we have $\Pb(T_k\leq
t)=\sum_{r=0}^{s-j}b_{sr}(e^{-t})$.
Multiplying by $n_{s\ell}/n$ and using Glivenko-Cantelli theorem (see
e.g. Proposition 4.24 in \citep{kal}), we have
\begin{eqnarray*}
\sup_{t\geq 0}
\left|\frac{1}{n}\sum_{r=0}^{s-j}\sum_{i=0}^{s-r}U_{sri,\ell}(t)-\frac{n_{s\ell}}{n}\sum_{r=0}^{s-j}b_{sr}(e^{-t})\right|
\pto 0.
\end{eqnarray*}
Since $n_{s\ell}/n\to^p p_st_{s\ell}$ (by Condition \ref{cond}(i) and
the law of large numbers), we see that
\begin{eqnarray*}
\sup_{t\geq 0}\left|\frac{1}{n}\sum_{i=0}^{s-r}U_{sri,\ell}(t)-p_st_{s\ell}b_{sr}(e^{-t})\right|\pto 0
\end{eqnarray*}
The law of $G_k(t)$ given $d_k=s$ and $W_k(t)=r\leq s$
is a Binomial distribution with parameters
$s-r$ and $1-\pi$. Hence by the
law of large numbers, we have
\begin{eqnarray*}
\sup_{t\geq 0}
\left|\frac{U_{sri,\ell}(t)}{n}-p_st_{s\ell}b_{sr}(e^{-t})b_{s-r,i}(1-\pi)\right|\pto 0.
\end{eqnarray*}

Hence each term in
(\ref{eq:limsup}) tends to $0$ in probability.
The same holds for
any finite partial sum.
Let $\epsilon>0$ and $S$ be such that $\sum_{s=S}^\infty
sp_s<\epsilon$. By Condition \ref{cond}(iii), we have
$\sum_{s,\ell} sn_{s\ell}/n= \sum_s d_s/n \to \lambda = \sum_s sp_s$. Hence, also
$\sum_{s\geq S}\sum_\ell sn_{s\ell}/n\to\sum_{s=S}^\infty
sp_s<\epsilon$. So that for sufficiently large $n$, we get
$\sum_{s\geq S}\sum_\ell sn_{s\ell}/n<\epsilon$ and
\begin{eqnarray*}
\sup_{t\geq 0} \sum_{s\geq S}\sum_\ell\sum_{r=0}^s
\sum_{i=0}^{s-r}r\left|U_{sri,\ell}(t)/n-p_st_{s\ell}b_{s r}( e^{-t})b_{s-r,i}(1-\pi)\right|\\
\leq
\sup_{t\geq 0} \sum_{s\geq
  S}\sum_\ell\sum_{r=0}^s\sum_{i=0}^{s-r}r\left(U_{sri,\ell}(t)/n+p_st_{s\ell}b_{s
    r}( e^{-t})b_{s-r,i}(1-\pi)\right)\\
\leq \sum_{s\geq S}\sum_\ell s(n_{s\ell}/n+p_st_{s\ell})<2\epsilon,
\end{eqnarray*}
and the lemma follows.
\end{proof}

\subsection{Proof of the diffusion spread}\label{sec:spread}

Our proof of Theorem \ref{th:epi} is an
adaptation of the coupling argument in \citep{jl07}.
We start by analyzing the symmetric threshold model.
We can view the algorithm of Section \ref{sec:stm} as follows: start with
the graph $G$ and remove vertices from $S$, i.e. the initially active nodes.
As a result, if vertex
$i$ has not been removed, its degree has been lowered. We denote by $d_i^A$ the
degree of $i$ in the evolving graph.
Then iteratively remove
vertices $i$ such that $d_i^A< d_i-k_i$. All removed
vertices at the end of this procedure are active and all
vertices left are inactive.
It is easily seen that we obtain the same result by
removing edges where one endpoint satisfy $d_i^A< d_i-k_i$, until
no such edge remains, and finally removing all isolated vertices,
which correspond to active nodes.

Regard each edge as consisting of two half-edges, each half-edge
having one endpoint.
We introduce types of vertices. We set the type of vertices in the
seed $S$ to $B$.
Say that a vertex (not in $S$) is of type $A$ if $d_i^A\geq d_i-k_i$
and of type $B$ otherwise. In particular at the beginning of the
process, all vertices not in $S$ are of type $A$ since $d_i^A=d_i$ and
all vertices in $S$ are by definition of type $B$.
As the algorithm evolves, $d^A_i$ decreases so that some type $A$
vertices become of type $B$ during the execution of the algorithm.
Similarly, say that a half-edge is of type $A$ or $B$ when its
endpoint is.
As long as there is any half-edge of type $B$, choose one such
half-edge uniformly at random and remove the edge it belongs to. This
may change the other endpoint from $A$ to $B$ (by decreasing $d^A$)
and thus create new half-edges of type $B$. When there are no
half-edges of type $B$ left, we stop. Then the final set of active
nodes is the set of vertices of type $B$ (which are all isolated).

As in \citep{jl07}, we regard vertices as bins and half-edges as balls. At each step, we
remove first one random ball from the set of balls in $B$-bins and
then a random ball without restriction. We stop when there are no
non-empty $B$-bins. We thus alternately remove a random $B$-ball and a
random ball. We may just as well say that we first remove a random
$B$-ball. We then remove balls in pairs, first a random ball and then
a random $B$-ball, and stop with the random ball, leaving no $B$-ball
to remove.
We change the description a little by introducing colors. Initially
all balls are white, and we begin again by removing one random
$B$-ball. Subsequently, in each deletion step we first remove a random
white ball and then recolor a random white $B$-ball red; this is
repeated until no more white $B$-balls remain.

We now run this deletion process in continuous time such that, if
there are $j$ white balls remaining, then we wait an exponential time
with mean $1/j$ until the next pair of deletions. In other words, we
make deletions at rate $j$. This means that each white ball is deleted
with rate 1 and that, when we delete a white ball, we also color a
random white $B$-ball red. Let $A(t)$ and $B(t)$ denote the numbers of
white $A$-balls and white $B$-balls at time $t$, respectively, and
$A_1(t)$ denotes the number of $A$-bins at time $t$.
Since red balls are ignored, we may make a final change of rules, and
say that all balls are removed at rate $1$ and that, when a white ball
is removed, a random white $B$-ball is colored red; we stop when we
should recolor a white $B$-ball but there is no such ball.

Let $\tau$ be the stopping time of this process. First consider the
white balls only. There are no white $B$-balls left at $\tau$, so
$B(\tau)$ has reached zero. However, let us consider the last deletion
and recoloring step as completed by redefining $B(\tau)=-1$; we then
see that $\tau$ is characterized by $B(\tau)=-1$ and $B(t)\geq 0$ for
$0\leq t\leq \tau$. Moreover, the $A$-balls left at $\tau$ (which are
all white) are exactly the half-edges in the induced subgraph $I$ of inactive
nodes. Hence, the number of edges in this subgraph is
$\frac{1}{2}A(\tau)$, while the number of nodes not activated is
$A_1(\tau)$.



If we consider only the total number $A(t)+B(t)$  of white
balls in the bins, ignoring the types, the process is as follows: each
ball dies at rate $1$ and upon its death another ball is also
sacrificed. The process $A(t)+B(t)$ is a death process with rate $2$
(up to time $\tau$).
Consequently, by Lemma 4.3 of \citep{jl07} (or Lemma \ref{lem:death}
above), we have
\begin{eqnarray}
\label{eq:AB}\sup_{t\leq \tau}\left| A(t)+B(t) - n\lambda e^{-2t}\right| =o_p(n),
\end{eqnarray}
since Condition \ref{cond} (iii) implies $\sum_r r|\{i:\: d_i=r\}|/n\to \lambda$ .

Now if we consider the final version of the process restricted to
$A$-bins, it corresponds exactly to the death process studied in Section
\ref{sec:lem} above with $\pi=1$.
We need only to compute the initial condition for this process.
For a degree based activation, each vertex of degree $s$ is activated
(i.e. the corresponding bin becomes a $B$-bin) with probability
$\alpha_s$.
Hence by the law of large numbers, the number of $A$-bins with
initially $s$ balls and threshold $\ell$ is $n_{s\ell} =
(1-\alpha_s)p_st_{s\ell} n+o_p(n)$.
With the notation of Lemma \ref{lem:death}, we have
\begin{eqnarray*}
A(t) = \sum_{s\geq 1,\:r\geq s-\ell}rU_{sr0,\ell}(t),&\mbox{and, }&
A_1(t) = \sum_{s\geq 1,\:r\geq s-\ell}U_{sr0,\ell}(t),
\end{eqnarray*}
with the defective probability distribution $(1-\alpha_s)p_s$.
Hence by Lemma \ref{lem:death} we get (recall that $\pi=1$ here):
\begin{eqnarray*}
\sup_{t\leq \tau}\left|\frac{A(t)}{n}-\sum_{s\geq 1,r\geq
  s-\ell}r(1-\alpha_s)p_st_{s\ell}b_{sr}(e^{-t})\right|\pto 0.
\end{eqnarray*}
It is then easy to finish the proof as in \citep{jl07} for this
model (the complete argument is presented below for the more general percolated threshold model). In particular, it ends the proof of Theorem \ref{th:epi} for
the case $\pi=1$.

We now consider the percolated threshold model with $\pi<1$.
We modify the process as follows: for any white $A$-ball when it dies,
with probability $1-\pi$, we color it green instead of removing it. A
bin is of type $A$ if $r+i\geq s-\ell$, where $r$ is the number of
withe balls in the bin, $i$ the number of green balls (which did not
transmit the diffusion) and $s$ and $\ell$ are the initial degree and
threshold.
Let
$A(t)$ be the number of white $A$-balls. By Lemma \ref{lem:death}, we
now have:
\begin{eqnarray*}
\sup_{t\leq \tau}\left|\frac{A(t)}{n} -\sum_{s,r+i\geq
    s-\ell}r(1-\alpha_s)p_st_{s\ell}b_{sr}(e^{-t})b_{s-r,i}(1-\pi)\right|&\pto&0,\\
\sup_{t\leq \tau}\left|\frac{A_1(t)}{n} -\sum_{s,r+i\geq
    s-\ell}(1-\alpha_s)p_st_{s\ell}b_{sr}(e^{-t})b_{s-r,i}(1-\pi)\right|&\pto&0.
\end{eqnarray*}
In particular, we have thanks to Lemma \ref{lem:equiv} (in Appendix
\ref{sec:techlem}) for $t\leq \tau$,
\begin{eqnarray}
\label{eq:limtau}A(t)/n=\frac{e^{-t}}{1-\pi+\pi e^{-t}}h(e^{-t};\boldp,\boldt,\balpha,\pi)+o_p(n),\quad A_1(t)/n=h_1(e^{-t};\boldp,\boldt,\balpha,\pi)+o_p(n).
\end{eqnarray}
By looking at white balls (without taking types in consideration), we
see that Equation (\ref{eq:AB}) is still valid.
Hence, we have
\begin{eqnarray}
\label{eq:B}\sup_{t\leq
  \tau}\left|\frac{B(t)}{n}-\frac{e^{-t}}{1-\pi+\pi
  e^{-t}}g(e^{-t};\boldp,\boldt,\balpha, \pi)\right|\pto 0.
\end{eqnarray}

For simplicity we write $g(z)$ for $g(z;\boldp,\boldt,\balpha, \pi)$.
Assume now that $t_1>0$ is a constant independent of $n$ with
$t_1<-\ln \hz$ so that $\hz<1$ and $g(1)>0$.
Hence, we have $g(z)>0$ for $z\in (\hz,1]$ and thus $g(e^{-t})>0$ for
$t\leq t_1$. We can find some $c>0$ such that $g(e^{-t})\geq c$ for
$t\leq t_1$. But $B(\tau)=-1$, so if $\tau\leq t_1$ then
$\frac{e^{-\tau}}{1-\pi+\pi
  e^{-\tau}}g(e^{-\tau})-B(\tau)/n>c\frac{e^{-t_1}}{1-\pi+\pi
  e^{-t_1}}$ and from (\ref{eq:B}), we have $\Pb(\tau\leq t_1)\to 0$.
In case $\hz=0$, we may take any finite $t_1$ and find
$\tau\to^p\infty$ and (\ref{eq:limtau}) with $t\to \infty$, yields that
\begin{eqnarray*}
\lim_{t\to\infty}A(t)=o_p(n),\quad \lim_{t \to \infty}A_1(t)=nh_1(0;\boldp,\boldt,\balpha, \pi)+o_p(n).
\end{eqnarray*}
In case $\hz>0$, by the hypothesis we can find $t_2\in (-\ln \hz, -\ln
(\hz-\epsilon))$ such that $g(e^{-t_2})=-c<0$. If $\tau>t_2$, then
$B(t_2)\geq 0$ and thus $B(t_2)/n-\frac{e^{-t_2}}{1-\pi+\pi
  e^{-t_2}}g(e^{-t_2})\geq c\frac{e^{-t_2}}{1-\pi+\pi e^{-t_2}}$.
Hence by (\ref{eq:B}), we have
$\Pb(\tau\geq t_2)\to 0$.
Since we can choose $t_1$ and $t_2$ arbitrarily close to $-\ln \hz$,
we have $\tau\to^p -\ln \hz$ and (\ref{eq:limtau}) with $t=\tau$ yields that
\begin{eqnarray*}
A(\tau)=\frac{n\hz}{1-\pi+\pi \hz}h(\hz;\boldp,\boldt,\balpha, \pi)+o_p(n),\quad A_1(\tau)= nh_1(\hz;\boldp,\boldt,\balpha, \pi)+o_p(n).
\end{eqnarray*}
Note that $v(H) = n-A_1(\tau)$ and $e(I)=\frac{1}{2}A(\tau)$.
Hence we proved Theorem \ref{th:epi} for $v(H)/n$ and $e(I)/n$. The results for $v_{sr}(I)/n$ and $v_s(H)/n$
follows from the same argument, once we note that
\begin{eqnarray*}
v_{sr}(I) = \sum_{r+i\geq s-\ell}U_{sri,\ell}(\tau)\mbox{
  and, } v_s(H)=|\{j:d_j=s\}|-\sum_{r+i\geq s-\ell}U_{sri,\ell}(\tau).
\end{eqnarray*}
Finally, the statement concerning the distribution of the induced subgraph $I^*$ follows from the fact that this subgraph has not been explored when previous algorithm stops.

\section{Proof of Theorem \ref{prop:cascade}}\label{sec:casc}

We start this section with some simple calculations.
We define for $z\in [0,1]$,
\begin{eqnarray*}
a(z) &=&\sum_sp_s \sum_{\ell \leq s} t_{s\ell}\sum_{r\geq s-\ell}r b_{sr}(z),\\
h(z)&=&a(1-\pi+\pi z),\\
h_1(z)&=& \sum_s p_s \sum_{\ell \leq s} t_{s\ell}\sum_{r\geq s-\ell} b_{sr}(1-\pi+\pi z),\\
h_2(z)&=& \frac{z}{1-\pi+\pi z}h(z) \mbox{ if $\pi<1$ and } h_2(z)=
h(z) \mbox{ otherwise,}\\
g(z) &=& \lambda z(1-\pi+\pi z)-h(z),\\
f(z)&=&\frac{z}{1-\pi+\pi z}g(z) \mbox{ if $\pi<1$ and } f(z)=
g(z) \mbox{ otherwise.}
\end{eqnarray*}
For $s\geq 1$ and $\ell\geq 1$, we have
\begin{eqnarray*}
\frac{d}{dz}\sum_{r\geq s-\ell}r b_{sr}(z) &=& s^2z^{s-1}+\sum_{s-1\geq r\geq s-\ell}r(r-sz){s\choose r} z^{r-1}(1-z)^{s-r-1}\\
&=& s^2z^{s-1}-\sum_{s-1\geq r\geq s-\ell}r(sz-r)\frac{s}{z(s-r)}b_{s-1r}(z),
\end{eqnarray*}
so that for $z\in [0,1]$, we have $\left| \frac{d}{dz}\sum_{r\geq
    s-\ell}r b_{sr}(z)\right|\leq s^2+s^3$. Hence by
Condition \ref{cond2}, $a$ is differentiable on $[0,1]$ and we have
\begin{eqnarray*}
a'(z)=\sum_{s,\ell} p_st_{s\ell}\left(s^2z^{s-1}-\sum_{s-1\geq r\geq
    s-\ell}r(sz-r){s\choose r}z^{r-1}(1-z)^{s-1-r}\right).
\end{eqnarray*}
In particular, we have
\begin{eqnarray*}
a'(1) &=& \sum_{s,\ell}p_st_{s\ell}\left(s^2-\ind(\ell\geq
  1)s(s-1)\right)\\
&=& \sum_{s}p_{s}t_{s0}s(s-1)+\lambda,
\end{eqnarray*}
so that we have $g'(1) =\lambda(1+\pi)-\pi a'(1)$.
Note also that $f(1)=g(1)=0$ and $f'(1)=g'(1)$.

Consider now the case (ii) where $\pi\sum_{s}p_{s}t_{s0}s(s-1)<\lambda$, so that $g'(1)=f'(1)>0$.
The proof for an upper bound on $n^{-1}v(\cC(1,\dots,j))$ follows
easily from Theorem \ref{th:epi}. Take a parameter $\balpha =
(\alpha_d)_{d\in \Nbold}$ with $\alpha_d=\alpha>0$ for all
$d$. Clearly the final set of active nodes $H(\alpha)$ will be greater
than for any seed with size $o_p(n)$. Now when $\alpha$ goes to zero,
the fact that $f'(1)<0$ ensures that $\hz(\alpha)\to 1$ in Theorem
\ref{th:epi} so that $\lim_{\alpha\to 0}\lim_{n\to \infty}v(H(\alpha))/n=0$. Hence point
(ii) in Theorem \ref{prop:cascade} follows.

We now concentrate on the case where the cascade condition holds.
In particular we have $g'(1)<0$ so that $\xi$ defined in
(\ref{eq:defxi}) by $\xi = \sup\{z\in [0,1), g(z)=0\}$ is strictly
less than one and we have
\begin{eqnarray}\label{eq:pos}
f(z)>0, \:\forall z\in(\xi,1).
\end{eqnarray}
Also as soon as there exists $\epsilon>0$ such that $g(z)<0$ for $z\in (\xi-\epsilon, \xi)$, we can use the same argument as above. Since, we have in this
case $\hz(\alpha) \to \xi$ as $\alpha\to 0$, it gives an upper bound
that matches with the statement (i) of Theorem \ref{prop:cascade}.
In order to prove a lower bound, we follow the general approach of
\citep{jlgiant}.
We modify the algorithm defined in Section \ref{sec:spread} as
follows: the initial set $S$ now contains only one vertex, say $v_1$. When there is
no half-edge (or ball) of type $B$, we say that we make an exception and we
select a vertex (or a bin) of type $A$ 
uniformly at random among all vertices of type $A$. 
We declare its white half-edges of type
$B$ and remove its other half-edges, i.e. remove the green balls
contained in the corresponding bin if there are any.
Exceptions are done instantaneously.

For any set of nodes $v_1,\dots, v_k$, let $\cC(v_1,\dots, v_k)$ be
the subgraph induced by the final active nodes with initial active
nodes $v_1,\dots, v_k$.
If $S=\{v_1\}$, then clearly when the algorithm has exhausted the
half-edges of type $B$, it removed the subgraph $\cC(v_1)$ from
the graph and all edges with one endpoint in $\cC(v_1)$.
Then an exception is made by selecting a vertex say $v_2$ in $G
\backslash \cC(v_1)$.
Similarly when the algorithm exhausted the half-edges of type $B$, it
removed the subgraph $\cC(v_1,v_2)$ and all edges with one endpoint in
this set of vertices.
More generally, if $k$ exceptions are made consisting of selecting
nodes $v_1,\dots v_k$, then before the $k+1$-th exception is made (or at
termination of the algorithm if there is no more exception made), the
algorithm removed the subgraph $\cC(v_1,\dots, v_k)$ and all edges
with one endpoint in this set of vertices.

We use the same notation as in Section \ref{sec:spread}.
In particular, we still have:
\begin{eqnarray}
\label{eq:l2}\sup_{t\geq 0}\left| A(t)+B(t) - n\lambda e^{-2t}\right| =o_p(n).
\end{eqnarray}

We now ignore the effect of the exceptions by letting $\tA(t)$ 
be the number of white $A$ balls if no exceptions were made, i.e. assuming
$B(t)>0$ for all $t$.
If $\dm=\max_i d_i$ is the maximum degree of $G^*(n, \boldd)$, then
we have $\tA(0)=A(0)\in [n-\dm,n]$. By Condition \ref{cond} (iii),
$\dm=O(n^{1/2})$, and thus $n^{-1}\dm =o(n)$. Hence we can apply
results of previous section:
\begin{eqnarray}
\label{eq:tA}\sup_{t\geq 0}\left|\frac{\tA(t)}{n} -\sum_{s,r+i\geq
    s-\ell}r p_st_{s\ell}b_{sr}(e^{-t})b_{s-r,i}(1-\pi)\right|&\pto &0.
\end{eqnarray}

We now prove that:
\begin{eqnarray}
\label{eq:bd}0\leq \tA(t) -A(t)<\sup_{0\leq s\leq t}(\tA(s)-A(s)-B(s))+\dm.
\end{eqnarray}
The fact that $\tA(t)\geq A(t)$ is clear. Furthermore, $\tA(t)-A(t)$
increases only when exceptions are made.
If an exception is made at
time $t$, then the process $B$ reached zero and a vertex with $j$
white half-edges is selected so that $B(t)=j-1<\dm$. Hence we have
\begin{eqnarray*}
\tA(t)-A(t)<\tA(t)-A(t)-B(t)+\dm.
\end{eqnarray*}
Between exceptions, if $A$ decreases by one then so does $\tA$, hence
$\tA(t)-A(t)$ does not increase.
Consequently if $s$ was the last time before $t$ that an exception was
performed, then $\tA(t)-A(t)\leq \tA(s)-A(s)$ and (\ref{eq:bd})
follows.

Let $\tB(t) = A(t)+B(t)-\tA(t)$, then we have
\begin{eqnarray}
\label{eq:tB}\sup_{t\geq
  0}\left|\frac{\tB(t)}{n}-f(e^{-t})\right|\pto 0.
\end{eqnarray}
Equation (\ref{eq:bd}) can be written as
\begin{eqnarray}
\label{eq:bd1}0\leq \tA(t)-A(t)<-\inf_{s\leq t}\tB(s)+\dm
\end{eqnarray}

We first assume that $\xi$ given by (\ref{eq:defxi}) is such that
$\xi>0$ and  there exists $\epsilon>0$ such that $g(z)<0$ and hence $f(z)<0$ for $z\in (\xi-\epsilon,\xi)$.
Let $\tau=-\ln \xi$.
Then by (\ref{eq:pos}), we have $f(e^{-t})>0$ for $0<t<\tau$ so that
$\inf_{t\leq \tau} f(e^{-t})=f(1)=0$ and hence by (\ref{eq:tB}),
\begin{eqnarray}
\label{eq:tBinf}\inf_{t\leq \tau}n^{-1}\tB(t)\pto 0.
\end{eqnarray}
By Condition \ref{cond} (iii), $n^{-1}\dm
=o(n)$. Consequently, (\ref{eq:bd1}) yields
\begin{eqnarray}
\label{eq:tBtau}\sup_{t\leq \tau}n^{-1}|\tB(t)-B(t)|=\sup_{t\leq
  \tau}n^{-1}|\tA(t)-A(t)|\pto 0,
\end{eqnarray}
and thus by (\ref{eq:tB})
\begin{eqnarray}
\label{eq:B2}\sup_{t\leq \tau}\left|\frac{B(t)}{n}-f(e^{-t})\right|\pto 0.
\end{eqnarray}

By assumption, there exists
$\epsilon>0$ sufficiently small for $f(e^{-\tau-\epsilon})<0$. Since
$f(e^{-t})>0$ on the interval $[\epsilon, \tau-\epsilon]$,
(\ref{eq:B2}) implies that w.h.p. $B(t)$ remains positive on
$[\epsilon, \tau-\epsilon]$, and thus no exception is made during this
interval.

On the other hand, $f(e^{-\tau-\epsilon})<0$ and (\ref{eq:tB}) implies
$n^{-1}\tB(\tau+\epsilon)= f(e^{-\tau-\epsilon})+o_p(n)$, while
$B(\tau+\epsilon)\geq 0$. Thus with $\delta= -f(e^{-\tau-\epsilon})/2>0$, w.h.p.
\begin{eqnarray}\label{eq:tau}
\tA(\tau+\epsilon)-A(\tau+\epsilon)=
B(\tau+\epsilon)-\tB(\tau+\epsilon)\geq -\tB(\tau+\epsilon)>n\delta,
\end{eqnarray}
while (\ref{eq:tBtau}) yields $\tA(\tau)-A(\tau)<n\delta$ w.h.p.
Consequently, w.h.p.
$\tA(\tau+\epsilon)-A(\tau+\epsilon)>\tA(\tau)-A(\tau)$ and an
exception is performed between $\tau$ and $\tau+\epsilon$.

Let $T_1$ be the last time an exception was performed before $\tau/2$
and let $T_2$ be the next time it is performed. We have shown that for
any $\epsilon>0$, w.h.p. $0\leq T_1\leq \epsilon$ and $\tau-\epsilon\leq
T_2\leq \tau+\epsilon$.

\begin{Lemma}\label{lm:int}
Let $T^*_1$ and $T^*_2$ be two random times when an exception is
performed, with $T^*_1\leq T^*_2$, and assume that $T^*_1\pto t_1$
and $T^*_2\pto t_2$, where $0\leq t_1\leq t_2\leq \tau$. If $v(T^*_i)$
is the number of vertices removed by
the algorithm by time $T^*_i$, then
\begin{eqnarray}
\label{eq:lev}\frac{v(T^*_2)-v(T^*_1)}{n}&\stackrel{p}{\to}&h_1(e^{-t_1})-h_1(e^{-t_2}).
\end{eqnarray}
In particular, if $t_1=t_2$, then $v(T^*_2)-v(T^*_1)=o_p(n)$.
\end{Lemma}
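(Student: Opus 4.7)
The central observation is that at an exception time $T^*_i$ we have $B(T^*_i)=0$, so every vertex is either still of type $A$ (contributing to $A_1(T^*_i)$) or of type $B$ with no remaining white half-edges---i.e.\ fully processed and already removed by the algorithm. Hence
\[
v(T^*_i) \;=\; n - A_1(T^*_i),
\]
and it suffices to prove $A_1(T^*_i)/n \pto h_1(e^{-t_i})$ for $i=1,2$.

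The plan is to first apply Lemma \ref{lem:death} to the ideal (exception-free) coupling with initial distribution $\balpha=\mathbf{0}$, so that the number of bins starting with $s$ balls and threshold $\ell$ is $n_{s\ell}=p_s t_{s\ell}n+o_p(n)$. Summing the resulting uniform convergence over the index set $\{(s,r,i):r+i\geq s-\ell\}$ yields
\[
\sup_{t\geq 0}\left|\frac{\tilde{A}_1(t)}{n}-h_1(e^{-t})\right|\pto 0,
\]
where $\tilde{A}_1(t)$ is the number of type-$A$ bins in the ideal coupling; this is the single-activation analogue of the second line of (\ref{eq:limtau}).

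The next step is to transfer this estimate from $\tilde{A}_1$ to $A_1$, by proving
\[
\sup_{t\leq \tau}\, n^{-1}\bigl|\tilde{A}_1(t)-A_1(t)\bigr|\pto 0.
\]
I would mimic the argument that produced (\ref{eq:bd}) and (\ref{eq:bd1}): between exceptions the $A$-bin dynamics of the real and ideal processes agree, and the cumulative discrepancy created by all exceptions up to time $t$ is dominated by $-\inf_{s\leq t}\tilde{B}(s)+\dm$. By (\ref{eq:tBinf}) and Condition \ref{cond}(iii), which implies $\dm=O(n^{1/2})$, this quantity is $o_p(n)$ uniformly on $[0,\tau]$. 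Combined with the previous step, this gives $\sup_{t\leq\tau}\bigl|A_1(t)/n - h_1(e^{-t})\bigr|\pto 0$.

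Finally, since $t\mapsto h_1(e^{-t})$ is continuous and $T^*_i\pto t_i\in[0,\tau]$, the uniform estimate yields $A_1(T^*_i)/n\pto h_1(e^{-t_i})$, and therefore
\[
\frac{v(T^*_2)-v(T^*_1)}{n} \;=\; \frac{A_1(T^*_1)-A_1(T^*_2)}{n} \;\pto\; h_1(e^{-t_1})-h_1(e^{-t_2}),
\]
with the case $t_1=t_2$ being immediate. The main obstacle is the transfer step: a single exception can trigger a cascade that converts many further $A$-bins into $B$-bins in the real process, and one must absorb all these extra transitions into the vanishing $\tilde{B}$-deficit rather than naively counting one unit per exception.
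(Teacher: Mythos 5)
Your overall strategy is the same as the paper's: write $v(T^*_i)=n-A_1(T^*_i-)$, get the limit of the exception-free quantity $\tA_1(t)/n$ from Lemma \ref{lem:death} with $\balpha=\bold{0}$, and transfer it to $A_1$ by bounding the discrepancy caused by exceptions through the infimum of $\tB$, exactly as in (\ref{eq:bd1}). Up to that point your plan coincides with the paper's proof (which uses $\tA_1(t)-A_1(t)\leq \tA(t)-A(t)$ together with (\ref{eq:bd1})).

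There is, however, a genuine gap in your last step. You prove the uniform estimate $\sup_{t\leq\tau}\,n^{-1}|\tA_1(t)-A_1(t)|\pto 0$ only on the deterministic interval $[0,\tau]$, and then conclude $A_1(T^*_i)/n\pto h_1(e^{-t_i})$ from continuity of $h_1(e^{-t})$ and $T^*_i\pto t_i$. This does not follow when $t_i=\tau$ --- which is precisely the case the lemma is applied to in the proof of Theorem \ref{prop:cascade}, where $T_2\pto\tau$ but $T_2$ may exceed $\tau$ (it is only shown that $\tau-\epsilon\leq T_2\leq\tau+\epsilon$ w.h.p.). Your bound says nothing about $A_1(T^*_i)$ on the event $T^*_i>\tau$, and the coupling genuinely breaks down just past $\tau$: by (\ref{eq:tau}), $\tA(\tau+\epsilon)-A(\tau+\epsilon)>n\delta$ w.h.p., so you cannot extend the uniform closeness to any fixed interval $[0,\tau+\epsilon]$, and continuity of $h_1$ alone cannot bridge the overshoot. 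The paper avoids this by running the discrepancy bound up to the \emph{random} time $T^*_2$ itself: since $T^*_2\pto t_2\leq\tau$ and $f$ is continuous, $\inf_{t\leq T^*_2}f(e^{-t})\pto\inf_{t\leq t_2}f(e^{-t})=0$, whence by (\ref{eq:tB}) and (\ref{eq:bd1}) one gets $\sup_{t\leq T^*_2}n^{-1}|\tA(t)-A(t)|\pto 0$ (the analogue of (\ref{eq:tBinf})--(\ref{eq:tBtau}) with $\tau$ replaced by $T^*_2$), and then $\tA_1-A_1\leq\tA-A$ and the all-$t$ convergence of $\tA_1(t)/n$ finish the proof. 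Replacing your fixed-interval estimate by this random-time estimate (or otherwise controlling the decrease of $A_1$ on $(\tau,T^*_i]$) is the missing ingredient; the rest of your argument is sound and matches the paper.
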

\begin{proof}
By definition, we have:
\begin{eqnarray*}
v(T^*_2)-v(T^*_1) = A_1(T^*_1-)-A_1(T^*_2-).
\end{eqnarray*}
Since $T^*_2\stackrel{p}{\to}t_2\leq \tau$ and $f$ is continuous,
$\inf_{t\leq T^*_2}f(e^{-t})\stackrel{p}{\to}\inf_{t\leq
  t_2}f(e^{-t})=0$, and (\ref{eq:tB}) and (\ref{eq:bd1}) imply, in
analogy with (\ref{eq:tBinf}) and (\ref{eq:tBtau}),
$n^{-1}\inf_{t\leq T^*_2}\tB(t)\stackrel{p}{\to}0$ and
\begin{eqnarray}\label{eq:tA-A}
\sup_{t\leq T^*_2}n^{-1}|\tA(t)-A(t)|\stackrel{p}{\to}0.
\end{eqnarray}

Let $\tA_1(t)$ be the number of bins if no exceptions were made.
Then we clearly have $\tA_1(t)-A_1(t)\leq \tA(t)-A(t)$ since each time
an exception is made $\tA_1(t)-A_1(t)$ increases by one while
$\tA(t)-A(t)$ increases by more than one.
Hence (\ref{eq:lev}) follows from results in previous section.
\end{proof}

Let $\cC'$ (resp. $\cC''$) be the subgraph induced by the vertices removed by the algorithm
between $0$ and $T_1$ (resp. $T_2$). By Lemma \ref{lm:int}, we have
\begin{eqnarray}
\label{eq:C'} v(\cC')/n&\stackrel{p}{\to}&0\\
\label{eq:C''}v(\cC'')/n&\stackrel{p}{\to}&h_1(1)-h_1(\xi)=1-h_1(\xi).
\end{eqnarray}
Hence informally, the exception made at time $T_1$ triggers a large
cascade.

Now consider the case $\xi=0$. Note in particular that we have
$h(0)=0$ and since $h_1(z)\leq h(z)$, we also have $h_1(0)=0$.
Then with the same argument as above,
we have that
$B(t)$ remains positive on $[\epsilon,+\infty)$ and thus no exception
is made after a last exception made at time $T_1$ with $T_1\to^p
0$. Hence (\ref{eq:C'}) and (\ref{eq:C''}) follow with $\cC''$ being
the whole graph (except possibly $o_p(n)$ vertices), $h_1(0)=0$.

We now finish the proof of Theorem
\ref{prop:cascade} in the case where $\xi=0$ or there exists $\epsilon>0$ such that $g(z)<0$ for $z\in (\xi-\epsilon ,\xi)$.
First by \citep{j08} Theorems 3.5 and 3.9, the cascade condition implies that
$v(P)=\Omega_p(n)$. The result $\lim_n\frac{v(P)}{n} = 1-\bh_1(\bxi)>0$ could be derived from \citep{j08}. We give an alternative proof for this result at the end of this section.
For now, we denote $\gamma = 1-\bh_1(\bxi)>0$.
Coming back to the diffusion process analyzed above, we clearly have
$\cC'\cap P=\emptyset$. We now prove that $P\subset \cC''$.
This is clear in the case $\xi=0$. We now concentrate on the case
$\xi>0$.
First, let $T_3$ be the first time after $T_2$ when an exception is
made. Since $\tA(t)-A(t)$ increases by at most $d_{\max}=o(n)$ each
time an exception is made, we obtain from (\ref{eq:tA-A}):
\begin{eqnarray*}
\sup_{t\leq T_3} (\tA(t)-A(t))\leq \sup_{t\leq T_2} (\tA(t)-A(t))+d_{\max}=o_p(n).
\end{eqnarray*}
Hence similarly as in (\ref{eq:tau}), we have for every $\epsilon>0$,
w.h.p. $\tau+\epsilon > T_3$. Since also $T_3>T_2\to^p \tau$, it
follows that $T_3\to^p \tau$. If $\cC'''$ is the subgraph removed by
the algorithm between $T_2$ and $T_3$, then Lemma \ref{lm:int} implies
that $v(\cC''')=o_p(n)$.
Assume now that $\cC''\cap P=\emptyset$, then with probability at
least $\gamma>0$, the vertex chosen for the exception at $T_2$ belongs
to $P$ and then we have $\Pb(\cC''' \mbox{has more than $\gamma
  n$ vertices})\geq \gamma$, in contradiction with
$v(\cC''')=o_p(n)$. Hence we have $\cC''\cap P\neq \emptyset$ and then
$P\subset \cC''$ as claimed.

We clearly have for any $u\in P$:
\begin{eqnarray*}
v(\cC(P))=v(\cC(u))= v(\cap_{u\in P}\cC(u)).
\end{eqnarray*}
Hence we only need to prove that
$v(\cC(P))\geq 1-h_1(\xi)n+o_p(n)$.
To see this, attach to each vertex $i$ a random variable $U_i$ uniformly
distributed over $[0,1]$. Each time an exception has to be made, pick
the vertex among the remaining ones with minimal $U_i$ so that we do not change the algorithm described at the
beginning of the section.
From the analysis above, we see that all
exceptions made before $T_1$ are vertices not in $P$ and the exception made at time $T_1$ belongs to
$P$.
Now consider the graph $\tG$ obtained from the original graph where $\cC'$
has been removed but all other variables are the same as in the
original graph. Since $v(\cC')=o_p(n)$, this graph satisfies
Conditions \ref{cond} and \ref{cond2}. Hence previous analysis applies
and we have in addition that the first exception made by the algorithm
belongs to $P$ since a pivotal vertex in $G$ is also pivotal in $\tG$.
Hence the subgraph of $\tG$ removed between times $\tilde{T}_1=0$ and $\tilde{T}_2$ by the algorithm is exactly $\tilde{\cC}(P)$ in $\tG$. Since $\tG$
is a subgraph of $G$, we have $\tilde{\cC}(P)\subset \cC(P)$ in
the original graph. And the first claim in (i) follows from (\ref{eq:C''}) applied
to the graph $\tG$.
The second claim in (i) follows exactly as in the proof of Theorem \ref{th:epi} given in previous section.

Now consider the case where $\xi>0$ but for any $\epsilon>0$, there exists $z\in (\xi-\epsilon,\xi)$ such that $g(z)\geq0$ and hence $f(z)>0$.
The idea to get a lower bound is to let $\pi$ vary.
Since $\pi>0$, for any
$0<\pi'<\pi$, we see that by a standard coupling argument, for any given
initial seed all active nodes in the model with $\pi'$ will also be active in the
model with $\pi$. Hence the model with $\pi'$ provides a lower bound
for the number of active nodes in the model with $\pi$.
Now consider the function $g(z,\pi)=\lambda z(1-\pi(1-z))-a(1-\pi(1-z))$ as a function of $\pi$. We have
\begin{eqnarray*}
\frac{\partial g}{\partial \pi}(z,\pi) = (1-z)\left[ a'(1-\pi(1-z))-\lambda z\right]
\end{eqnarray*}
Since $\xi$ is a local minimum of $z\mapsto g(z,\pi)$ and $g$ is differentiable as a function of $z$, we have
\begin{eqnarray*}
\frac{\partial g}{\partial z}(\xi,\pi) = 0 \Leftrightarrow a'(1-\pi(1-\xi)) = \frac{\lambda}{\pi}(1-\pi(1-\xi))+\lambda \xi.
\end{eqnarray*}
Hence we have
$\frac{\partial g}{\partial \pi}(\xi,\pi) = \frac{\lambda}{\pi}(1-\xi)(1-\pi(1-\xi))>0$.
In particular for $\pi-\epsilon<\pi'<\pi$, we have $g(\xi,\pi')<g(\xi,\pi)=0$.
Let $\xi(\pi')=\sup\{z\in [0,1),\: g(z,\pi')=0\}$, then we have $\xi(\pi')>\xi$ for any $\pi'<\pi$ and $g(z,\pi')<0$ for $z\in (\xi,\xi(\pi'))$.
Moreover, we have $\xi(\pi')\to \xi(\pi)$ as $\pi'\to \pi$ and
previous argument is valid for the model with $\pi'$ as close as
desired from $\pi$ showing that $1-h_1(\xi(\pi'))\to 1-h_1(\xi)$ is a
lower bound for the fraction of active nodes in the model with $\pi$.

We finish this proof by computing the asymptotic for the size of $P$ using our previous analysis but for a modified threshold as done in \citep{wine}. We add a bar for quantities associated to this new model.
Namely, consider a modification of the original diffusion with threshold
$\bK_i(d_i)=(d_i+1)\ind(K_i(d_i)\geq 1)$. In words, a node $i$ becomes
active if one of its neighbor is active and $K_i(d_i)=0$ in the
original diffusion model. Clearly the nodes that become active in this model
need to have only one active neighbor in the original diffusion model
with parameter $K_j(d_j)$.
We denote by $\bcC(u)$ the subgraph of final active nodes with
initial active node $u$. Note that our algorithm is an exploration process of the components of the graph on which we apply a bond percolation with parameter $\pi$ and a site percolation by removing all vertices with $k_i\geq 1$.
In particular, the largest component explored is exactly $P$. Note that the computations made before are valid with the following functions:
\begin{eqnarray*}
\bg(z) &=& (1-\pi+\pi z) \left\{ \lambda z -\sum_s sp_s(1-t_{s0}) -\sum_s sp_st_{s0}(1-\pi+\pi z)^{s-1}\right\}\\
\bh_1(z)&=& \sum_s p_s(1-t_{s0}) +\sum_s p_st_{s0}(1-\pi+\pi z)^{s}.
\end{eqnarray*}
Hence we have $\bxi = \sup \{z\in [0,1],\: \bg(z)=0 \}$.
Note that if we denote $\phi(z)=\bg(z)(1-\pi+\pi z)^{-1}$, then we have
\begin{eqnarray*}
\phi'(z) &=& \lambda -\pi \sum_s s(s-1)p_st_{s0} (1-\pi+\pi z)^{s-2},\\
\phi''(z) &=& -\pi^2 \sum_{s}s(s-1)(s-2) p_st_{s0} (1-\pi+\pi z)^{s-3}.
\end{eqnarray*}
In particular, $\phi$ is concave on $(0,1]$ and strictly concave unless $p_s=0$ for $s\geq 3$.
Note also that $\phi'(1) = \lambda -\pi\sum_{s}s(s-1)p_st_{s0}$, so that under the cascade condition $\phi'(1)<0$ and $\phi$ is strictly concave. Hence, we have $\bxi<1$ and $\phi(x)>0$ for $x\in (\bxi,1)$ and if $\xi>0$, then $\phi(x)<0$ for $x<\bxi$. In particular, previous analysis allows to conclude that $\lim_n\frac{v(P)}{n} = 1-\bh_1(\bxi)>0$.

\section{Proof of Propositions \ref{prop:inactiveer} and \ref{prop:inactive}}\label{sec:inactive}

We start with the proof of Proposition \ref{prop:inactive}.
By Theorem \ref{prop:cascade}, when the cascade condition holds, the set of active vertices contain the set of pivotal vertices and hence has a giant component. We denote $\cI$ the induced subgraph of inactive vertices, in the pivotal equilibrium (i.e. in the final state when all pivotal nodes are initially active).
By Theorem \ref{prop:cascade}, we have for $v_r(\cI)$, the number of vertices in $\cI$ with degree $r$ in $\cI$:
\begin{eqnarray*}
\frac{v_r(\cI)}{n} \stackrel{p}{\to} \sum_{r\geq s-\lfloor qs \rfloor}p_sb_{sr}(\xi),\mbox{ where, } \xi=\xi(\lambda)=\max\left\{z<1,\: \lambda z^2 = \sum_s p_s\sum_{r\geq s-\lfloor qs \rfloor}rb_{sr}(z) \right\} .
\end{eqnarray*}
We denote $v_r(z)=\sum_{r\geq s-\lfloor qs \rfloor}p_sb_{sr}(z)$.
Thanks to the result on the distribution of $\cI$, there is a giant component of inactive vertices if
\begin{eqnarray*}
\sum_r r(r-1)v_r(\xi)>\sum_r rv_r(\xi)=\sum_s p_s\sum_{r\geq s-\lfloor qs \rfloor}rb_{sr}(\xi)=\lambda \xi^2,
\end{eqnarray*}
which can be rewritten as in Proposition \ref{prop:inactive}.

We now prove Proposition \ref{prop:inactiveer}.
We assume that $p_r=\frac{\lambda^r}{r!}e^{-\lambda}$, $\pi=1$ and $K(d)=\lfloor qd\rfloor$.
The function $\psi(\lambda)= e^{-\lambda}\sum_{r<q^{-1}} \frac{\lambda^{r-1}}{(r-2)!}$ is increasing for $\lambda\leq \lambda^*$ and then decreasing. We assume that $q$ is fixed such that the cascade condition holds. Then $\psi(\lambda^*)>1$.
Then $\lambda_i(q) = \sup \{\psi(\lambda)<1,\lambda<\lambda^*\}$ while $\lambda_s(q)= \inf\{\psi(\lambda)>1,\:\lambda>\lambda^*\}$.
We denote $$\zeta(\lambda)= \max\left\{z<1,\: \lambda z^2 = \sum_s p_s\sum_{r\geq s-\lfloor qs \rfloor}r(r-1)b_{sr}(z) \right\}.$$
Then, there is a giant component of inactive vertices if $\zeta(\lambda)<\xi(\lambda)$.
Both functions are non-increasing in $\lambda\in (\lambda_i(q),\lambda_s(q))$ and are intersecting only once in $(\lambda_i(q),\lambda_s(q))$.

\section{Conclusions}\label{sec:conc}

This paper analyzes the spread of new behaviors or technologies in large social networks. Our analysis is motivated by the two qualitative features of global cascades in social and economics systems: they occur rarely, but are large when they do.

In our simplest model, agents play a local interaction binary game where the underlying social network is modeled by a sparse random graph. Considering the deterministic best response dynamics, we compute the contagion threshold for this model. We find that when the social network is sufficiently sparse, the contagion is limited by the low connectivity of the network; when it is sufficiently dense, the contagion is limited by the stability of the high-degree nodes. This phenomenon explains why contagion is possible only in a given range of the global connectivity.

Most importantly, we identify the set of agents able to trigger a large cascade: the pivotal players, i.e. the largest component of players requiring a single neighbor to change strategy in order to follow the change. The notion of pivotal player is crucial in the context of random graphs. Cascades occur only when pivotal players represent a positive fraction of the population and in this case, any cascade will be triggered by such a pivotal player.
We found that these pivotal players exist only if the connectivity of the network is in a given range. At both ends of this range (i.e. for low and high-connectivity), the number of pivotal players is low. However in the high-connectivity case, we found that the system displays a robust-yet-fragile quality: while pivotal players are very rare, they trigger very large cascades. This feature makes global contagions exceptionally hard to anticipate. We also analyze possible equilibria of the game and in particular, we find conditions for the existence of equilibria with co-existent conventions. 

Motivated by social advertising, we also consider cases where the number of pivotal players is a negligible fraction of the population. In this case, contagion is still possible if the set of initial adopters is sufficiently large. We compute the final size of the contagion as a function of the fraction of the initial adopters. We find that the low and high-connectivity cases still have different features: in the first case, the global connectivity helps the spread of the contagion while in the second case, high connectivity inhibits the global contagion but once it occurs, it facilitates its spread.

 Finally, we analyze a general percolated threshold model for the diffusion allowing to give different weights to the (anonymous) neighbors. Our general analysis gives explicit formulas for the spread of the diffusion in terms of the initial condition, the degree sequence of the random graph, and the distribution of the thresholds. It is hoped that this analysis will stimulate applications of our results to other practical cases. One such possibility is the study of financial networks as started in \cite{gaka10} and \cite{bat09}.


\setcounter{section}{0}
\renewcommand{\thesection}{\Alph{section}}

\section{Appendix}
\subsection{Technical lemma}\label{sec:techlem}

\begin{Lemma}\label{lem:equiv}
For any $x,\pi\in [0,1]$, and $k\geq0$ we have
\begin{eqnarray*}
\frac{x}{1-\pi+x\pi} \sum_{r\geq k} rb_{s,r}(1-\pi+x\pi) = \sum_{r+i\geq k}rb_{s,r}(x)b_{s-r,i}(1-\pi).
\end{eqnarray*}
\end{Lemma}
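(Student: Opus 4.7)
The plan is to establish the identity via a probabilistic ``thinning'' argument. I introduce the following coupling on $s$ balls: for each ball, independently assign a ``Type A'' label with probability $x$, and for each ball not labeled Type A, independently assign a ``Type B'' label with probability $1-\pi$. Let $R$ be the number of Type A balls and $I$ the number of Type B balls, so that $R\sim\Bi(s,x)$ and, conditionally on $R$, $I\sim\Bi(s-R,1-\pi)$. Consequently $\Pb(R=r,I=i)=b_{s,r}(x)\,b_{s-r,i}(1-\pi)$. Each individual ball is either Type A or Type B with probability $x+(1-x)(1-\pi)=1-\pi+x\pi$, independently across balls, so $R+I\sim\Bi(s,1-\pi+x\pi)$.

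Next I re-read both sides of the target identity as expectations against this coupling. The right-hand side is exactly $\Eb[R\,\ind(R+I\geq k)]$, while the left-hand side is $\frac{x}{1-\pi+x\pi}\Eb[(R+I)\ind(R+I\geq k)]$. So the lemma reduces to the identity
\begin{eqnarray*}
\Eb[R\,\ind(R+I\geq k)] \;=\; \frac{x}{1-\pi+x\pi}\,\Eb[(R+I)\,\ind(R+I\geq k)].
\end{eqnarray*}
This I would prove via a one-line application of the tower property, once I know the conditional law of $R$ given $R+I$. By standard Bayesian thinning, conditional on a single ball being ``successful'' (Type A or Type B), the probability it is Type A equals $x/(1-\pi+x\pi)$; by independence across balls, the $m$ successful balls in the event $\{R+I=m\}$ are independently Type A with this probability, so $R\mid\{R+I=m\}\sim\Bi\bigl(m,x/(1-\pi+x\pi)\bigr)$. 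In particular $\Eb[R\mid R+I]=(R+I)\cdot\frac{x}{1-\pi+x\pi}$, and the identity follows by taking expectations.

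I do not expect a serious obstacle: the only subtlety is the degenerate case $\pi=1$ and $x=0$, where $1-\pi+x\pi=0$ and the prefactor on the left is formally $0/0$. There one notes that $b_{s,r}(0)=\ind(r=0)$, so both sides of the original equation vanish identically, and the identity holds with the convention that the left-hand side equals $0$. Alternatively one may verify the identity for $x,\pi\in(0,1)$ by the above argument and then extend to the boundary by continuity of both sides in $(x,\pi)$.
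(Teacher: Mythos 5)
Your proof is correct, and it takes a different route from the one in the paper, although both rest on the same thinning structure $x+(1-x)(1-\pi)=1-\pi+x\pi$. The paper's proof (Appendix A.1) is computational: it divides both sides by $x$, applies the size-biasing identity $\frac{r}{p}b_{s,r}(p)= s\, b_{s-1,r-1}(p)$ to strip the weight $r$, and then recognizes the two resulting sums as two expressions for the probability of one and the same event, $\sum_{i=1}^{s-1}B_iY_i\le s-k$, where the $B_i$ are Bernoulli$(\pi)$ and the $Y_i$ Bernoulli$(1-x)$ (so one side counts influential-and-active neighbors directly, the other counts the complementary configuration $\sum_i(1-Y_i)+\sum_i Y_i(1-B_i)\ge k-1$). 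You instead keep the weight $r$ as the random variable $R$ in a three-type model, observe that $(R,I)$ has exactly the product-binomial law appearing on the right-hand side while $R+I\sim\Bi(s,1-\pi+x\pi)$ governs the left-hand side, and finish with $\Eb[R\mid R+I]=(R+I)\,x/(1-\pi+x\pi)$ and the tower property; the conditional-binomial step you invoke is the standard multinomial conditioning fact and your justification of it is adequate. What each approach buys: yours avoids any binomial-coefficient manipulation, treats all $k\ge 0$ uniformly, and explicitly disposes of the degenerate corner $\pi=1$, $x=0$ (the paper simply restricts its computation to $x>0$, for which $1-\pi+x\pi>0$, and the case $x=0$ is trivial); the paper's version is more self-contained in that it uses only the elementary index-shift identity and a direct matching of events, with no appeal to conditional laws. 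Both are complete proofs of the lemma.
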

\begin{proof}
This follows from the following observations for $x>0$:
\begin{eqnarray}
\nonumber\sum_{r\geq k} \frac{r}{1-\pi+x\pi} b_{s,r}(1-\pi+x\pi) &=& (s-1)\sum_{r\geq k-1}b_{s-1,r}(1-\pi(1-x))\\
\nonumber&=& (s-1)\sum_{r\leq s-k}b_{s-1,r}(\pi(1-x))\\
\label{eq:tec}&=& (s-1) \Pb(\sum_{i=1}^{s-1} B_iY_i \leq s-k),
\end{eqnarray}
where the $B_{i}$'s and $Y_i$'s are independent Bernoulli random variables with
parameter $\pi$ and $1-x$ respectively.
Now we also have:
\begin{eqnarray*}
\sum_{r+i\geq k}\frac{r}{x}b_{s,r}(x)b_{s-r,i}(1-\pi) &=& (s-1)\sum_{r+i\geq k-1}b_{s-1,r}(x)b_{s-1-r,i}(1-\pi)\\
&=& (s-1) \Pb(\sum_{i=1}^{s-1}(1-Y_i) +\sum_{i=1}^{s-1}Y_i(1-B_i)\geq k-1),
\end{eqnarray*}
which corresponds exactly to (\ref{eq:tec}).
\end{proof}

\subsection{Properties of the functions $h$ and $g$}\label{sec:max}

We now justify the use of the $\max$ in (\ref{eq:max}).
Let $\sigma(d)$ be a Bernoulli random variable with
parameter $\alpha_d$.
Let for $0\leq x\leq 1$, $D_x$ be the thinning of $D$ obtained by taking $D$ points and then randomly and independently keeping each of them with probability $x$. Thus given $D=d$, $D_x\sim \Bi(d,x)$. With these notations, we have
\begin{eqnarray*}
h(z;\boldp,\boldt,\balpha, \pi)&=&\Eb\left[ D_{1-\pi+\pi z}(1-\sigma(D))\ind\left(D_{1-\pi+\pi
    z}\geq D-K(D)\right)\right]\\
h_1(z;\boldp,\boldt,\balpha, \pi)&=& \Pb\left(\sigma(D)=0,\: D_{1-\pi+\pi
    z}\geq D-K(D)\right),
\end{eqnarray*}
so that both $h$ and $h_1$ are non-decreasing in $z$ and non-increasing in $\pi$.

Note that if $\balpha=0$, then $h(1;\boldp,\boldt,0, \pi)=\lambda$ so that
$g(1;\boldp,\boldt,0, \pi)=0$ and $\hz=1$.
We now consider the case $\balpha\neq 0$, so that there exists $d\geq 1$ such that $\alpha_d>0$.
In this case, we have $g(1;\balpha,\boldp, \pi) = \lambda-\sum_{s}s(1-\alpha_s)p_s\geq \alpha_d p_d d > 0$.
The statement then follows from the fact that the only possible jumps for $z\mapsto g(z;\boldp,\boldt,\balpha, \pi)$ are downwards. More precisely, let $\hz=\sup\{z\in [0,1], g(z;\boldp,\boldt,\balpha, \pi)=0\}$. Since the function $h$ is non-decreasing in $z$, its set of discontinuity points is denumerable say $\{z_i\}_{i\in \Nbold}$ and $h$ admits a left and right limit at any point $z$ denoted by $h(z-)$ and $h(z+)$ respectively. If $\hz\in \{z_i\}$, then we have
$h(\hz-)\leq \lambda \hz(1-\pi+\pi\hz)\leq h(\hz+)$.
In particular, we have $g(z;\boldp,\boldt,\balpha, \pi)\leq 0$ for any $z>\hz$ which contradicts the fact that $g(1;\boldp,\boldt,\balpha, \pi) >0$. Hence the functions $h$ and $g$ are continuous at $\hz$ and the $\sup$ is attained and can be replaced by a $\max$.

\subsection{Proof of Lemma \ref{lem:lmf}}

Taking expectation in (\ref{eq:RDE}), we get with $x=\Eb[Y]$,
\begin{eqnarray*}
1-x &=& \sum_{s\geq 0}p^*_s(1-\alpha_{s+1})\Pb\left(\sum_{i=1}^{s}B_iY_i
  \leq q(s+1) \right)\\
&=&  (1-\alpha_1)\frac{p_1}{\lambda}+\sum_{s\geq 1} \frac{(s+1)p_{s+1}}{\lambda}(1-\alpha_{s+1})
\sum_{j\leq q(s+1)}b_{sj}(x\pi)\\
&=&  \frac{1}{\lambda}\left\{(1-\alpha_1)p_1+\sum_{s\geq 1; j\geq s-\lfloor qs\rfloor} (1-\alpha_{s+1})p_{s+1} (s+1)b_{sj}(1-x\pi)\right\}.
\end{eqnarray*}
Note that $(s+1)b_{sj}(1-x\pi) = \frac{j+1}{1-x\pi}b_{s+1j+1}(1-x\pi)$ for $s\geq 1$, so that
\begin{eqnarray*}
1-x&=& \frac{1}{\lambda}\left\{(1-\alpha_1)p_1+\sum_{s\geq 1; j\geq s-\lfloor qs\rfloor}
\frac{(j+1)}{1-x\pi}(1-\alpha_{s+1})p_{s+1}b_{s+1j+1}(1-x\pi)\right\}
\\
&=& \frac{1}{\lambda (1-x\pi)}\sum_{s\geq 1; j\geq s-\lfloor qs\rfloor} j(1-\alpha_s)p_s b_{sj}(1-x\pi).
\end{eqnarray*}
Hence, we get
\begin{eqnarray*}
(1-x\pi)(1-x)\lambda = \sum_{s\geq 1; j\geq s-\lfloor qs\rfloor} j(1-\alpha_s)p_{s} b_{sj}(1-x\pi),
\end{eqnarray*}
and the first part of the lemma follows.
Taking expectation in (\ref{eq:X}) gives:
$\Eb[X_{\o}] =1-\sum_{s;j\geq s-\ell}(1-\alpha_s)p_st_{s\ell}b_{sj}(1-x\pi)$,
and the second part of the lemma follows.

\end{document}